\documentclass[11pt,leqno]{amsart}

\usepackage[english]{babel}
\usepackage{url}
\usepackage[latin1]{inputenc}

\usepackage[all,2cell,emtex]{xy}
\usepackage{amssymb,amsmath,bbm,xr,a4wide,color,stmaryrd}
\usepackage[mathscr]{euscript}
\usepackage{mathrsfs}


%
%


\newcommand{\sm}{\mathrm{Sm}_k}
\newcommand{\smc}{\widetilde{\mathrm{Cor}}_k}
\newcommand{\smcR}[1]{\widetilde{\mathrm{Cor}}_{k,{#1}}}
\newcommand{\smcV}{\mathrm{Cor}_k}
\newcommand{\smcVR}[1]{\mathrm{Cor}_{k,{#1}}}

\newcommand{\psh}{\mathrm{PSh}(k,R)}
\newcommand{\psht}{\widetilde{\mathrm{PSh}}(k,R)}
\newcommand{\pshtV}{\mathrm{PSh}^{\mathrm{tr}}(k,R)}
\newcommand{\pshfr}{\mathrm{PSh}^{\mathrm{Fr}}(k,R)}

\newcommand{\sh}{\mathrm{Sh}_t(k,R)}
\newcommand{\sht}{\widetilde{\mathrm{Sh}}_t(k,R)}
\newcommand{\shtV}{\mathrm{Sh}^{\mathrm{tr}}_t(k,R)}

\newcommand{\shx}[1]{\mathrm{Sh}_{#1}(k,R)}
\newcommand{\shtx}[1]{\widetilde{\mathrm{Sh}}_{#1}(k,R)}
\newcommand{\shtVx}[1]{\mathrm{Sh}^{\mathrm{tr}}_{#1}(k,R)}

\DeclareMathOperator{\corr}{\tilde {\mathrm{c}}}
\DeclareMathOperator{\Icorr}{\mathrm{I}\tilde{\mathrm{c}}}
\DeclareMathOperator{\corV}{\mathrm{c}}

\newcommand{\DAe}{\Der_{\AA^1}^{\mathrm{eff}}(k,R)}
\newcommand{\DAeZ}{\Der_{\AA^1}^{\mathrm{eff}}(k)}
\newcommand{\DMte}{\widetilde{\mathrm{DM}}{}^{\mathrm{eff}}\!(k,R)}
\newcommand{\DMteZ}{\widetilde{\mathrm{DM}}{}^{\mathrm{eff}}\!(k)}
\newcommand{\DMe}{\mathrm{DM}^{\mathrm{eff}}(k,R)}

\newcommand{\DAex}[1]{\Der_{\AA^1,#1}^{\mathrm{eff}}(k,R)}
\newcommand{\DMtex}[1]{\widetilde{\mathrm{DM}}{}_{#1}^{\mathrm{eff}}(k,R)}
\newcommand{\DMex}[1]{\mathrm{DM}_{#1}^{\mathrm{eff}}(k,R)}

\newcommand{\DMtegm}{\widetilde{\mathrm{DM}}^{\mathrm{eff}}_{gm}(k,\ZZ)}
\newcommand{\DMteZZ}{\widetilde{\mathrm{DM}}(k,\ZZ)}

\newcommand{\shtZZ}{\widetilde{\mathrm{Sh}}(k,\ZZ)}

\newcommand{\DA}{\Der_{\AA^1}(k,R)}
\newcommand{\DMt}{\widetilde{\mathrm{DM}}(k,R)}
\newcommand{\DM}{\mathrm{DM}(k,R)}

\newcommand{\DAx}[1]{\Der_{\AA^1,#1}(k,R)}
\newcommand{\DMtx}[1]{\widetilde{\mathrm{DM}}_{#1}(k,R)}
\newcommand{\DMx}[1]{\mathrm{DM}_{#1}(k,R)}

\newcommand{\spt}{\widetilde{\mathrm{Sp}}_t(k,R)}


\def\KM{\mathrm{K}^{\mathrm{M}}} 
\def\KMW{\mathrm{K}^{\mathrm{M\hspace{-.2ex}W}}} 

\def\sKMW{\mathbf{K}^{\mathrm{M\hspace{-.2ex}W}}} 
\def\Wi{\mathbf{W}}


\def\H{\mathrm{H}}
\DeclareMathOperator{\Hom}{Hom}

\DeclareMathOperator{\uHom}{\underline{Hom}} 

\DeclareMathOperator{\wCH}{\widetilde{CH}}
\DeclareMathOperator{\spec}{Spec}

\DeclareMathOperator{\ilim}{\varinjlim}

\newcommand{\tO}{\tilde{\mathcal O}} 

\newcommand{\prep}[1]{{\corr}(#1)}
\newcommand{\prepNP}{{\corr}}
\newcommand{\rep}[1]{\tilde \ZZ_t(#1)}

\newcommand{\repR}[1]{\tilde {R}_t(#1)}
\newcommand{\IrepR}[1]{\mathrm{I}\tilde{R}_t(#1)}

\DeclareMathOperator{\otr}{\tilde \otimes}

\newcommand{\mot}{\tilde {\mathrm{M}}}
\newcommand{\motV}{\mathrm{M}}

\newcommand{\sus}[1]{\mathrm{C}_*^{sing}(#1)}

\DeclareMathOperator{\Th}{\mathrm{Th}}  

\DeclareMathOperator{\Comp}{C}
\DeclareMathOperator{\K}{K}
\DeclareMathOperator{\Der}{D}

\newcommand{\derL}{\mathbf{L}}
\newcommand{\derR}{\mathbf{R}}


\newcommand{\cX}{\mathcal X}


\newcommand{\NN} {\mathbb N}
\newcommand{\ZZ} {\mathbb Z}

\newcommand{\OO}{\mathcal O}
\renewcommand{\AA} {\mathbb A}
\newcommand{\PP} {\mathbb P}
\newcommand{\GG} {\mathbb{G}_m}

\newcommand{\GGx}[1] {\mathbb{G}_{m,#1}}

\newcommand{\nis}{\mathrm{Nis}}
\newcommand{\zar}{\mathrm{Zar}}
\newcommand{\et}{\mathrm{\acute et}}

\title{$\mathrm{MW}$-motivic complexes}

\author{Fr\'ed\'eric D\'eglise}
\address{Institut Math\'ematique de Bourgogne - UMR 5584, Universit\'e de Bourgogne, 9 avenue Alain Savary, BP 47870, 21078 Dijon Cedex, France}
\email{frederic.deglise@ens-lyon.fr}
\urladdr{http://perso.ens-lyon.fr/frederic.deglise/}
 
\author{Jean Fasel}
\address{Institut Fourier - UMR 5582, Universit\'e Grenoble-Alpes, CS 40700, 38058 Grenoble Cedex 9, France}
\email{jean.fasel@gmail.com}
\urladdr{https://www.uni-due.de/~adc301m/staff.uni-duisburg-essen.de/Home.html}

\thanks{The first author was supported by the ANR (grant No. ANR-12-BS01-0002).}
\date{\today}

\newtheorem{thm}{Theorem}[subsection]
\newtheorem{prop}[thm]{Proposition}
\newtheorem{lm}[thm]{Lemma}
\newtheorem{cor}[thm]{Corollary}

\theoremstyle{remark} 
\newtheorem{rem}[thm]{Remark}

\newtheorem{ex}[thm]{Example}

\theoremstyle{definition} 
\newtheorem{df}[thm]{Definition}
\newtheorem{num}[thm]{}

\numberwithin{equation}{thm}

\newtheorem{thm*}{Theorem}



\begin{document}

\begin{abstract}
The aim of this work is to develop a theory parallel
 to that of motivic complexes based on cycles
 and correspondences with coefficients in quadratic forms.
 This framework is closer to the point of view of $\AA^1$-homotopy than the original
 one envisioned by Beilinson and set up by Voevodsky.
\end{abstract}

\maketitle

\setcounter{tocdepth}{3}
\tableofcontents

\section*{Introduction}

The aim of this paper is to define the various categories of $\mathrm{MW}$-motives built out of the category of finite Chow-Witt correspondences constructed in \cite{Calmes14b}, and to study the motivic cohomology groups intrinsic to these categories. In Section \ref{sec:MWtransfers}, we start with a quick remainder of the basic properties of the category $\smc$. We then proceed with our first important result, namely that the sheaf (in either the Nisnevich or the \'etale topologies) associated to a $\mathrm{MW}$-presheaf, i.e. an additive functor $\smc\to \mathrm{Ab}$,  is a $\mathrm{MW}$-sheaf. The method follows closely Voevodsky's method and relies on Lemma \ref{lm:corr_main}. We also discuss the monoidal structure on the category of $\mathrm{MW}$-sheaves. In the second part of the paper, we prove the analogue for $\mathrm{MW}$-sheaves of a famous theorem of Voevodsky saying that the sheaf (with transfers) associated to a homotopy invariant presheaf with transfers is strictly $\AA^1$-invariant. Our method here is quite lazy. We heavily rely on the fact that an analogue theorem holds for quasi-stable sheaves with framed transfers by \cite[Theorem 1.1]{Garkusha15}. Having this theorem at hand, it suffices to construct a functor from the category of linear framed presheaves to $\smc$ to prove the theorem. This functor is of independent interest and this is the reason why we take this shortcut. There is currently work by Hakon Andreas Kolderup to give an independant (but still relying on ideas of Panin-Garkusha) proof of this theorem. In Section \ref{sec:MWmotives}, we finally pass to the construction of the categories of $\mathrm{MW}$-motives starting with a study of different model structures on the category of possibly unbounded complexes of $\mathrm{MW}$-sheaves. The ideas here are closely related to \cite{CD3}. The category of effective motives $\DMte$  (with coefficients in a ring $R$) is then defined as the category of $\AA^1$-local objects in this category of complexes. Using the analogue of Voevodsky's theorem proved in Section \ref{sec:framed}, these objects are easily characterized by the fact that their homology sheaves are strictly $\AA^1$-invariant. This allows as usual to give an explicit $\AA^1$-localization functor, defined in terms of the Suslin (total) complex. The category of geometric objects is as in the classical case the subcategory of compact objects of $\DMte$. Our next step is the formal inversion of the Tate motive in $\DMte$ to obtain the stable category of $\mathrm{MW}$-motives $\DMt$ (with coefficients in $R$). We can then consider motivic cohomology as groups of extensions in this category, a point of view which allows to prove in Section \ref{sec:MWcohom} many basic property of this version of motivic cohomology, including a commutativity statement and a comparison theorem between motivic cohomology and Chow-Witt groups.

\section*{Acknowledgments}

The authors would like to warmly thank G. Garkusha for remarks on previous versions of this article and for interesting conversations about framed correspondences. We also would like to thank T. Bachmann for numerous interesting conversations and useful remarks.

\section*{Conventions} \label{conventions}

In all this work,
 we will fix a base field $k$ assumed to be infinite perfect.
 All schemes considered will be assumed
 to be separated of finite type over $k$,
 unless explicitly stated.

We will fix a ring of coefficients $R$.
 We will also consider a Grothendieck topology $t$ on the site of
 smooth $k$-schemes, which in practice will be either the Nisnevich
 of the \'etale topology. In section 3 and 4 we will restrict
 to these two latter cases.

\section{$\mathrm{MW}$-transfers on sheaves}\label{sec:MWtransfers}

\subsection{Reminder on $\mathrm{MW}$-correspondences}

\begin{num}
We will use the definitions and constructions of \cite{Calmes14b}.

In particular, for any smooth schemes $X$ and $Y$ (with $Y$ connected of dimension $d$),
 we consider the following group of \emph{finite $\mathrm{MW}$-correspondences} 
 from $X$ to $Y$:
\begin{equation}\label{eq:def_corr}
\corr(X,Y):=\ilim_T \wCH^d_T\!\big(X \times Y,\omega_{Y}\big)
\end{equation}
where $T$ runs over the ordered set of reduced (but not necessarily irreducible) closed subschemes in
 $X \times Y$ whose projection to $X$ is finite equidimensional and $\omega_Y$ is the pull back of the canonical sheaf of $Y$ along the projection to the second factor.
 This definition is extended to the case where $Y$ is non connected
 by additivity. When considering the coefficients ring $R$, we put:
$$
\corr(X,Y)_R:=\corr(X,Y) \otimes_\ZZ R.
$$
 In the sequel, we drop the index $R$ from the notation when there is no possible confusion. 
 
Because there is a natural morphism from Chow-Witt groups
 (twisted by any line bundle) to Chow groups, we get a canonical map:
\begin{equation}\label{eq:from_corr2corV}
\pi_{XY}:\corr(X,Y) \rightarrow \corV(X,Y)
\end{equation}
for any smooth schemes $X$ and $Y$, where the right hand side
 is the group of Voevodsky's finite correspondences which
 is compatible to the composition --- see \emph{loc. cit.}
 Remark 4.12. Let us recall the following result. 
\end{num}

\begin{lm}\label{lm:corr&corrV_upto2tor}
If $2\in R^\times$, the induced map
$$
\pi_{XY}:\corr(X,Y) \rightarrow \corV(X,Y)
$$
is a split epimorphism.
\end{lm}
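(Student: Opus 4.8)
The plan is to realise $\pi_{XY}$ as a filtered colimit of maps between cohomology groups of Rost-Schmid complexes, and to split those maps (after inverting $2$) already at the level of the coefficient sheaves. By construction of \eqref{eq:from_corr2corV}, together with the standard identification $\corV(X,Y)=\ilim_T\mathrm{CH}^d_T(X\times Y)$, the map $\pi_{XY}$ is the colimit over $T$ of the forgetful maps $\pi^{(T)}\colon\wCH^d_T(X\times Y,\omega_Y)\to\mathrm{CH}^d_T(X\times Y)$, each induced on $H^d_T$ by the canonical epimorphism of sheaves $\sKMW_d\twoheadrightarrow\sKM_d$ acting on the associated Rost-Schmid complexes (the one for $\sKM_d$ being the Gersten complex that computes Chow groups with support). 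As filtered colimits are exact, it suffices to construct, compatibly in $T$, a section of $\pi^{(T)}\otimes_\ZZ R$; and since the Rost-Schmid complex is functorial in the coefficient sheaf and commutes with $-\otimes_\ZZ R$, it is enough to split the morphism of sheaves $\sKMW_d\otimes_\ZZ R\to\sKM_d\otimes_\ZZ R$ by a morphism compatible with the twists and residue maps entering the complex.

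To obtain such a splitting I would use Morel's presentation of Milnor-Witt K-theory: for every $n$ there is a short exact sequence of sheaves
\[
0\to\sKMW_n\to\sKM_n\oplus\mathbf I^n\to\mathbf C_n\to 0,
\]
exhibiting $\sKMW_n$ as the fiber product of the forgetful map $\sKMW_n\to\sKM_n$ and the signature map $\sKMW_n\to\mathbf I^n$ over the sheaf $\mathbf C_n=\sKM_n/2\cong\mathbf I^n/\mathbf I^{n+1}$, which is annihilated by $2$. When $2\in R^\times$ one has $\mathbf C_n\otimes_\ZZ R=0$ and $\mathrm{Tor}_1^\ZZ(\mathbf C_n,R)=0$ (a $\ZZ$-module that is $2$-torsion and on which $2$ acts invertibly vanishes), so tensoring the sequence with $R$ yields an isomorphism of sheaves
\[
\sKMW_d\otimes_\ZZ R\ \xrightarrow{\ \sim\ }\ (\sKM_d\otimes_\ZZ R)\oplus(\mathbf I^d\otimes_\ZZ R)
\]
under which the first projection is $(\sKMW_d\to\sKM_d)\otimes_\ZZ R$. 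The inclusion $s$ of the first summand is the section we need; it takes values in the part of $\sKMW_d\otimes_\ZZ R$ on which the twisting action of units is trivial (a unit acts through $\langle a\rangle\in\sKMW_0=\mathrm{GW}$, which has rank $1$ and hence fixes this summand), so $s$ is compatible with the $\omega_Y$-twist and induces a section of the associated map of Rost-Schmid complexes, hence of $\pi^{(T)}\otimes_\ZZ R$ for every $T$. Passing to the colimit over $T$ then produces a section of $\pi_{XY}$ with $R$-coefficients, so $\pi_{XY}$ is a split epimorphism.

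The one point I expect to require real care is the compatibility of this sheaf-level splitting with the full structure of the Rost-Schmid complex --- the twists by line bundles and the residue differential. This should become routine once everything is phrased inside Morel's category of strictly $\AA^1$-invariant sheaves, in which the fiber-product presentation of $\sKMW_*$ and all the maps used above are morphisms, tensoring with the $\ZZ[1/2]$-algebra $R$ is exact on the relevant exact sequence (the $\mathrm{Tor}$-term being $2$-torsion, hence zero), and every step commutes with passage to the complex and with the filtered colimit over the supports $T$.
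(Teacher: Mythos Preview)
Your argument is correct but takes a more elaborate route than the paper's. The paper simply observes that there is a natural map $\KM_n(F)\to\KMW_n(F,\mathcal L)$, already constructed in the twisted setting in \cite[\S 1]{Calmes14b}, whose composite with the projection $\KMW_n(F,\mathcal L)\to\KM_n(F)$ is multiplication by~$2$; dividing by $2$ then furnishes the section. Because this map is built directly for twisted Milnor--Witt K-theory and is compatible with the cycle-module structure by construction, the compatibility with line-bundle twists and residue maps that you flag as the delicate point comes for free.

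Your approach instead invokes Morel's fiber-product presentation $\sKMW_n\simeq\sKM_n\times_{\sKM_n/2}\mathbf I^n$, which after inverting~$2$ collapses to a product of graded rings, and takes the inclusion of the first factor as the section. This is heavier machinery but yields more: one obtains the full decomposition $\sKMW_*\otimes R\cong(\sKM_*\otimes R)\times(\mathbf I^*\otimes R)$ rather than merely a one-sided section. The two constructions in fact coincide --- the paper's map is essentially multiplication by the hyperbolic class $h=1+\langle -1\rangle$, and $h/2$ is precisely the idempotent $(1,0)$ in your decomposition of $\mathrm{GW}(F)\otimes R$ --- so your verification that the $\sKM$-summand is fixed by every $\langle a\rangle$ amounts to the same twist-compatibility that the paper obtains by quoting a map already defined in the twisted context.
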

The lemma comes from the basic fact that the following composite map
$$
\KM_n(F) \xrightarrow{(1)} \KMW_n(F,\mathcal L)
 \xrightarrow{(2)} \KM_n(F)
$$
is multiplication by $2$, where (1) is the map from Milnor K-theory
 of a field $F$ to Milnor-Witt K-theory of $F$ twisted by the $1$-dimensional
 $F$-vector space $\mathcal L$ described in \cite[\S 1]{Calmes14b} and (2) is the map killing $\eta$
 (see the discussion in \emph{loc. cit.} after Definition 3.1).

\begin{rem}\label{rem:finite_corr&plim}
\begin{enumerate}
\item In fact, a finite $\mathrm{MW}$-correspondence amounts to a finite correspondence
 $\alpha$ together with a quadratic form over the function field
 of each irreducible component of the support of $\alpha$
 satisfying some condition related with residues;
 see \cite[Def. 4.6]{Calmes14b}.
\item Every finite $\mathrm{MW}$-correspondence between smooth schemes $X$ and $Y$ has a well defined support (\cite[Definition 4.6]{Calmes14b}). Roughly speaking, it is the minimal closed subset of $X\times Y$ on which the correspondence is defined.
\item Recall that the Chow-Witt group in degree $n$ 
 of a smooth $k$-scheme $X$ can be defined as the $n$-th Nisnevich cohomology
 group of the $n$-th unramified Milnor-Witt sheaf $\sKMW_n$ (this cohomology being computed using an explicit flabby resolution of $\sKMW_n$).
 This implies that the definition can be uniquely extended to the case
 where $X$ is an essentially smooth $k$-scheme.
 Accordingly, one can extend the definition of finite 
 $\mathrm{MW}$-correspondences to the case of essentially smooth
 $k$-schemes using formula \eqref{eq:def_corr}. The definition
 of composition obviously extends to that generalized setting.
 We will use that fact in the proof of Lemma \ref{lm:corr_main}.
\item Consider the notations of the previous point.
 Assume that the essentially smooth $k$-scheme
 $X$ is the projective limit of a projective system of essentially
 smooth $k$-schemes $(X_i)_{i \in I}$. Then the canonical map:
\[
\Big({\ilim}_{i \in I^{op}} \corr(X_i,Y)\Big) \longrightarrow \corr(X,Y)
\]
is an isomorphism. This readily follows from formula \eqref{eq:def_corr}
 and the fact that Chow-Witt groups, as Nisnevich cohomology, commute
 with projective limits of schemes. See also \cite[\S 5.1]{Calmes14b} for an extended discussion of these facts. 
 \item For any smooth schemes $X$ and $Y$, the group $\corr(X,Y)$ is endowed with a structure of a left $\sKMW_0(X)$-module and a right $\sKMW_0(Y)$-module (\cite[Example 4.10]{Calmes14b}). Pulling back along $X\to \spec k$, it follows that $\corr(X,Y)$ is a left $\sKMW_0(k)$-module and it is readily verified that the category $\smc $ is in fact $\sKMW_0(k)$-linear. Consequently, we can also consider $\sKMW_0(k)$-algebras as coefficient rings. 
\end{enumerate}
\end{rem}

\begin{num}
Recall from \emph{loc. cit.} that there is a composition product
 of $\mathrm{MW}$-correspondences which is compatible with the projection
 map $\pi_{XY}$.
\end{num}
\begin{df}
We denote by $\smc$ (resp. $\smcV$) the additive category whose objects
 are smooth schemes and morphisms are finite $\mathrm{MW}$-correspondences
 (resp. correspondences). If $R$ is a ring,
 we let $\smcR R$ (resp. $\smcVR R$) be
 the category $\smc \otimes_{\ZZ} R$ (resp. $\smcV \otimes_{\ZZ} R$).

We denote by
\begin{equation}\label{eq:from_smc2smcV}
\pi:\smc \rightarrow \smcV
\end{equation}
the additive functor which is the identity on objects and the map
 $\pi_{XY}$ on morphisms.
\end{df}

As a corollary of the above lemma, the induced functor
$$
\pi:\smcR R \rightarrow \smcVR R,
$$
is full when $2\in R^\times$. Note that the corresponding result without inverting $2$ is wrong by \cite[Remark 4.15]{Calmes14b}.

\begin{num}
The external product of finite $\mathrm{MW}$-correspondences induces
 a symmetric monoidal structure on $\smc$ which on objects is given by
 the cartesian product of $k$-schemes. One can check that the
 functor $\pi$ is symmetric monoidal, for the usual
 symmetric monoidal structure on the category $\smcV$.
 
Finally, the graph of any morphism $f:X \rightarrow Y$ can be
 seen not only as a finite correspondence $\gamma(f)$ from $X$ to $Y$
 but also as a finite $\mathrm{MW}$-correspondence $\tilde \gamma(f)$
 such that $\pi \tilde \gamma(f)=\gamma(f)$. One obtains in this
 way a canonical functor:
\begin{equation}\label{eq:from_sm2smc}
\tilde \gamma:\sm \rightarrow \smc
\end{equation}
which is faithful, symmetric monoidal, and such that
 $\pi \circ \tilde \gamma=\gamma$. 
\end{num}

\subsection{$\mathrm{MW}$-transfers}

\begin{df}
We let $\psht$ (resp. $\pshtV$, resp. $\psh$) be the category of additive presheaves
 of $R$-modules on $\smc$ (resp. $\smcV$, resp. $\sm$). Objects of
 $\psht$ will be simply called \emph{$\mathrm{MW}$-presheaves}.
\end{df}

\begin{df}\label{def:representablepsht}
We denote by $\prepNP_R(X)$ the representable presheaf $Y\mapsto \corr (Y,X)\otimes_{\ZZ} R$. As usual, we also write $\prep X$ in place of  $\prepNP_R(X)$ in case the context is clear.
\end{df}

The category of $\mathrm{MW}$-presheaves is
 an abelian Grothendieck category.\footnote{\label{fn:Grothendieck}Recall
 that an abelian category
 is called Grothendieck abelian if it admits a family of generators,
 admits small sums and filtered colimits are exact.
 The category of presheaves over any essentially small category $\mathscr S$
 with values in a the category of $R$-modules is a basic example of Grothendieck
 abelian category. In fact, it is generated by representable presheaves of
 $R$-modules. The existence of small sums is obvious
 and the fact filtered colimits are exact can be reduced to the similar fact
 for the category of $R$-modules by taking global sections over objects
 of $\mathscr S$.} 
 It admits a unique symmetric monoidal
 structure such that the Yoneda embedding
$$
\smc \rightarrow \psht,\phantom{i} X \mapsto\prep X
$$
is symmetric monoidal (see e.g. \cite[Lecture 8]{Mazza06}). From the functors \eqref{eq:from_smc2smcV}
 and \eqref{eq:from_sm2smc}, we derive as usual adjunctions of categories:
$$
\xymatrix@=20pt{
\psh\ar@<3pt>^{\tilde \gamma^*}[r] 
 & \psht\ar@<3pt>^{\pi^*}[r]\ar@<1pt>^{\tilde \gamma_*}[l] 
 & \pshtV\ar@<1pt>^{\pi_*}[l]
}
$$
such that $\tilde \gamma_*(F)=F \circ \tilde \gamma$,
 $\pi_*(F)=F \circ \pi$. 
The left adjoints $\tilde \gamma^*$ and $\pi^*$ are easily described as follows. For a smooth scheme $X$, let $R(X)$ be the presheaf (of abelian groups) such that $R(X)(Y)$ is the free $R$-module generated by $\Hom(Y,X)$ for any smooth scheme $Y$. The Yoneda embedding yields $\Hom_{\psh}(R(X),F)=F(X)$ for any presheaf $F$, and in particular 
 \[
 \Hom_{\psh}(R(X),\tilde\gamma_*(F))=F(X)=\Hom_{\psht}(\prepNP_R(X),F)
 \] 
 for any $F\in\psht$. We can thus set $\tilde\gamma^*(R(X))= \prepNP_R(X)$. On the other hand, suppose that 
 \[
 F_1\to F_2\to F_3\to 0
 \]
 is an exact sequence in $\psh$. The functor $\Hom_{\psh}(\_,F)$ being left exact for any $F\in \psh$, we find an exact sequence of presheaves
 \[
 0\to \Hom_{\psh}(F_3,\tilde\gamma_*(G))\to \Hom_{\psh}(F_2,\tilde\gamma_*(G))\to \Hom_{\psh}(F_1,\tilde\gamma_*(G))
 \]
 for any $G\in\psht$ and by adjunction an exact sequence
  \[
 0\to \Hom_{\psh}(\tilde\gamma^*(F_3),G)\to \Hom_{\psh}(\tilde\gamma^*(F_2),G)\to \Hom_{\psh}(\tilde\gamma^*(F_1),G)
 \]
 showing that $\tilde\gamma^*(F_3)$ is determined by $\tilde\gamma^*(F_2)$ and $\tilde\gamma^*(F_1)$, i.e. that the sequence
 \[
  \tilde\gamma^*(F_1)\to \tilde\gamma^*(F_2)\to \tilde\gamma^*(F_3)\to 0
 \]
 is exact. This gives the following formula. If $F$ is a presheaf, we can choose a resolution by (infinite) direct sums of representable presheaves (e.g. \cite[proof of Lemma 8.1]{Mazza06})
 \[
 F_1\to F_2\to F\to 0
 \]
 and compute $\tilde\gamma^*F$ as the cokernel of $\tilde\gamma^*F_1\to \tilde\gamma^*F_2$. We let the reader define $\tilde \gamma^*$ on morphisms and check that it is independent (up to unique isomorphisms) of choices. A similar construction works for $\pi^*$. Note that the left adjoints $\tilde \gamma^*$
 and $\pi^*$ are symmetric monoidal and right-exact.

\begin{lm}\label{lm:compare_transfers}
The functors $\tilde \gamma_*$ and $\pi_*$ are faithful. If $2$ is invertible in the ring $R$ then the functor $\pi_*$ is also full.
\end{lm}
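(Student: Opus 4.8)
The plan is to observe that $\tilde\gamma_*$ and $\pi_*$ are nothing but the restriction (precomposition) functors along $\tilde\gamma$ and $\pi$, both of which are the identity on objects, and that restriction along an identity-on-objects functor is always faithful and is moreover full as soon as that functor is itself full.

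For faithfulness, I would recall that a morphism in $\psht$, $\pshtV$ or $\psh$ is by definition a natural transformation, hence is completely determined by the family of $R$-linear maps it induces on the $R$-modules $F(X)$, for $X$ ranging over smooth $k$-schemes. Since $\tilde\gamma$ and $\pi$ are the identity on objects, $\tilde\gamma_*$ and $\pi_*$ send a morphism $\varphi\colon F\to G$ to the morphism given by the very same maps $\varphi_X$. Thus $\tilde\gamma_*(\varphi)=0$ (resp.\ $\pi_*(\varphi)=0$) forces $\varphi_X=0$ for every $X$, whence $\varphi=0$; this proves that both functors are faithful.

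For the fullness of $\pi_*$ under the hypothesis $2\in R^\times$, I would first note that an additive presheaf of $R$-modules on $\smc$ (resp.\ on $\smcV$) is the same datum as an $R$-linear presheaf on $\smcR R$ (resp.\ on $\smcVR R$), and that under this identification $\pi_*$ becomes restriction along the $R$-linear functor $\pi\colon\smcR R\to\smcVR R$. By the corollary to Lemma~\ref{lm:corr&corrV_upto2tor}, this functor is full, so every morphism $\alpha\colon X\to Y$ in $\smcVR R$ is of the form $\pi(\beta)$ for some morphism $\beta\colon X\to Y$ in $\smcR R$. Now let $\psi\colon\pi_*F\to\pi_*G$ be a morphism in $\psht$, with $F,G\in\pshtV$; it consists of $R$-linear maps $\psi_X\colon F(X)\to G(X)$ natural with respect to every finite $\mathrm{MW}$-correspondence, and in particular, for $\beta$ as above,
\[
\psi_X\circ F(\alpha)=\psi_X\circ F(\pi\beta)=G(\pi\beta)\circ\psi_Y=G(\alpha)\circ\psi_Y .
\]
Hence $(\psi_X)_X$ is natural with respect to all of $\smcVR R$ and defines a morphism $\tilde\psi\colon F\to G$ in $\pshtV$ with $\pi_*(\tilde\psi)=\psi$, so $\pi_*$ is full.

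There is no genuine obstacle here: the only non-formal ingredient is the fullness of $\pi$ on the $R$-linearized correspondence categories, which is already recorded as a consequence of Lemma~\ref{lm:corr&corrV_upto2tor}, and the remainder is the elementary behaviour of restriction functors. The one point deserving a word of care is the (routine) identification of additive $R$-valued presheaves with $R$-linear presheaves on the $R$-linearizations, which is needed precisely because $\pi\colon\smc\to\smcV$ is not itself full before $2$ is inverted.
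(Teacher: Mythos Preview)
Your proof is correct and follows essentially the same approach as the paper: both arguments reduce faithfulness to the fact that $\tilde\gamma$ and $\pi$ are the identity on objects, and both deduce fullness of $\pi_*$ from the surjectivity of $\pi$ on morphisms when $2\in R^\times$ (Lemma~\ref{lm:corr&corrV_upto2tor}), by lifting an arbitrary correspondence $\alpha$ to a $\mathrm{MW}$-correspondence $\beta$ with $\pi(\beta)=\alpha$ and reading off naturality. The paper phrases this slightly differently---it pauses to characterize the essential image of $\pi_*$ before concluding---but the substance is identical to your direct restriction-along-a-full-functor argument.
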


\begin{proof}
The faithfulness of both $\tilde \gamma_*$ and $\pi_*$ are obvious.
 To prove the second assertion, we use the
 fact that the map \eqref{eq:from_corr2corV} from
 finite $\mathrm{MW}$-correspondences to correspondences is surjective
 after inverting $2$ (Lemma \ref{lm:corr&corrV_upto2tor}).
In particular, given a $\mathrm{MW}$-presheaf $F$,
 the property $F=\pi_*(F_0)$ is equivalent to the property on $F$
 that for any $\alpha, \alpha' \in \corr(X,Y)$ with $\pi(\alpha)=\pi(\alpha')$ then $F(\alpha)=F(\alpha')$.
Then it is clear that a natural transformation
 between two presheaves with transfers $F_0$ and $G_0$
 is the same thing as a natural transformation
 between $F_0 \circ \pi$ and $G_0 \circ \pi$.
\end{proof}

\begin{df}
We define a \emph{$\mathrm{MW}$-$t$-sheaf} (resp. $t$-sheaf with transfers)
 to be a presheaf with
 $\mathrm{MW}$-transfers (resp. with transfers) $F$ such that
 $\tilde \gamma_*(F)=F \circ \tilde \gamma$ (resp. $F \circ \gamma$)
 is a sheaf for the given topology $t$.
 When $t$ is the Nisnevich topology, we will simply
 say \emph{$\mathrm{MW}$-sheaf} and when $t$ is the \'etale topology
 we will say \emph{\'etale $\mathrm{MW}$-sheaf}.

We denote by $\sht$ the category of $\mathrm{MW}$-$t$-sheaves,
 seen as a full subcategory of the $R$-linear
 category $\psht$. When $t$ is the Nisnevich topology,
 we drop the index in this notation.
\end{df}
Note that there is an obvious forgetful functor
$$
\tO_t:\sht \rightarrow \psht
 \text{ (resp. } \mathcal O^{\mathrm{tr}}_t:\shtV \rightarrow \pshtV)
$$
which is fully faithful.
In what follows, we will drop
 the indication of the topology $t$ in the above functors,
 as well as their adjoints.

\begin{ex}
Given a smooth scheme $X$, the presheaf $\prep X$ is in general not a $\mathrm{MW}$-sheaf (see \cite[5.12]{Calmes14b}). Note however that $\prep {\spec k}$ is the unramified
 $0$-th Milnor-Witt sheaf $\sKMW_0$ (defined in \cite[\S 3]{Morel08}) by \emph{loc. cit.} Ex. 4.4.
\end{ex}

As in the case of the theory developed by Voevodsky, the theory of 
 $\mathrm{MW}$-sheaves rely on the following fundamental lemma, whose proof
 is adapted from Voevodsky's original argument.
 
\begin{lm}\label{lm:corr_main}
Let $X$ be a smooth scheme and $p:U \rightarrow X$ be a $t$-cover
 where $t$ is the Nisnevich or the \'etale topology.

Then the following complex
\[
\hdots \xrightarrow{\ d_n\ } \prep {U^n_X} \longrightarrow
 \hdots \longrightarrow \prep {U \times_X U} \xrightarrow{\ d_1\ } \prep U
 \xrightarrow{\ d_0\ }\prep X \rightarrow 0
\]
where $d_n$ is the differential associated with the \v Cech simplicial
 scheme of $U/X$, is exact on the associated $t$-sheaves.
\end{lm}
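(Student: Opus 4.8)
The strategy is to reduce the exactness of the \v Cech complex of representable $\mathrm{MW}$-presheaves on $t$-sheaves to a purely local (Hensel-local, resp. strictly Hensel-local) statement, following Voevodsky's treatment of finite correspondences. Since a complex of $t$-sheaves is exact if and only if it is exact on stalks, and the stalks of $\prep{Y}$ at a point $x$ of a smooth scheme $S$ are computed, by Remark \ref{rem:finite_corr&plim}(4), as $\corr(\OO_{S,x}^h, Y)$ (resp. $\corr(\OO_{S,x}^{sh}, Y)$ in the \'etale case) — the finite $\mathrm{MW}$-correspondences from the corresponding essentially smooth Hensel-local scheme — it suffices to prove: for $V$ the spectrum of a Hensel-local (resp. strictly Hensel-local) essentially smooth $k$-scheme, the complex
\[
\hdots \longrightarrow \corr(V, U^n_X) \longrightarrow \hdots \longrightarrow \corr(V, U\times_X U) \longrightarrow \corr(V,U) \longrightarrow \corr(V,X) \longrightarrow 0
\]
is exact. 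First I would set up this reduction carefully, invoking that filtered colimits are exact in $R$-modules and that sheafification commutes with taking stalks.

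Next, to prove exactness of this complex of abelian groups, I would construct a contracting homotopy. The key geometric input is that a finite $\mathrm{MW}$-correspondence $\alpha \in \corr(V, X)$ has a support $T \subset V \times X$ finite over $V$ (Remark \ref{rem:finite_corr&plim}(2)); since $V$ is Hensel-local with closed point $v$, the finite $V$-scheme $T$ decomposes, and more importantly, the $t$-cover $p: U \to X$ admits — after pulling back along $T \to X$ and using that $T$ is finite over the Hensel-local $V$ — a section over a neighbourhood of the relevant points, by the local lifting property of Nisnevich (resp. \'etale) covers over Hensel-local schemes. Concretely, $T \times_X U \to T$ is a $t$-cover of a scheme finite over $V$, hence splits; choosing such a splitting produces a lift $\tilde\alpha \in \corr(V,U)$ of $\alpha$ with $d_0(\tilde\alpha) = \alpha$, and an analogous argument on iterated fibre products $U^n_X$ gives the higher homotopy operators $s_n : \corr(V, U^n_X) \to \corr(V, U^{n+1}_X)$. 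I would then verify the simplicial identity $d_{n+1} s_n + s_{n-1} d_n = \mathrm{id}$, which is formal once the lifts are chosen compatibly (one fixes a single section of $T\times_X U \to T$ and uses it in every degree, exactly as in Voevodsky's argument).

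The main obstacle is making the choice of section functorial enough — or at least coherent enough across the simplicial degrees — so that the homotopy identity holds on the nose rather than merely up to something. The subtlety, familiar from \cite{Mazza06}, is that the support of $\alpha$ and hence the relevant finite $V$-scheme $T$ varies with $\alpha$, so one cannot literally pick one section for all of $\corr(V,U)$; instead one argues element-by-element: given $\alpha$ with support $T$, all the correspondences appearing in the verification have support contained in the preimages of $T$, and a single section of $T\times_X U \to T$ (which exists because $T$ is finite over the Hensel-local $V$, so $T$ is a finite disjoint union of Hensel-local schemes, over each of which the $t$-cover splits) suffices to define all the needed lifts. The only genuinely new point compared with the $\smcV$-case is bookkeeping: one must check that the transfer maps in $\smc$ respect supports and that the pullback/pushforward formalism for Chow-Witt groups with the twist by $\omega_Y$ behaves as in the untwisted case — but this is guaranteed by the functoriality recalled in \cite{Calmes14b} and already used to define composition of $\mathrm{MW}$-correspondences, so I expect no essential difficulty there, only care with the $\omega$-twists.
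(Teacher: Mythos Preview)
Your proposal is correct and follows essentially the same route as the paper: reduce to a henselian local base $S$ via stalks and Remark~\ref{rem:finite_corr&plim}(4), then build a contracting homotopy from a section of the pulled-back cover over the support $T$ (which is a disjoint union of henselian local schemes, being finite over $S$). The paper organizes this by writing the complex as a filtered union of subcomplexes $C_*^{(T)}$ indexed by admissible supports $T$ and constructs the homotopy on each $C_*^{(T)}$ via \'etale excision for Chow-Witt groups with support---this excision isomorphism is precisely the mechanism behind your ``choosing such a splitting produces a lift,'' and is the one ingredient you should name explicitly when writing out the details.
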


\begin{proof}
We have to prove that the fiber of the above complex at a $t$-point
 is an acyclic complex of $R$-modules.  Taking into
 account Remark \ref{rem:finite_corr&plim}(4), we are reduced to 
 prove, given an essentially smooth local henselian scheme $S$, that
 the following complex
$$
C_*:=\hdots \xrightarrow{\ d_n\ } \corr(S,U^n_X) \longrightarrow
 \hdots \longrightarrow \corr(S,U \times_X U) \xrightarrow{\ d_1\ } \corr(S,U)
 \xrightarrow{\ d_0\ } \corr(S,X) \rightarrow 0
$$
is acyclic.

Let $\mathcal A=\mathcal A(S,X)$ be the set of admissible subsets in $S\times X$ (\cite[Definition 4.1]{Calmes14b}). 
 Given any $T \in \mathcal A$, and an integer $n \geq 0$,
 we let $C_n^{(T)}$ be the subgroup of $\corr(S,U^n_X)$ consisting of
 $\mathrm{MW}$-correspondences whose support is in the closed subset $U^n_T:=T \times_X U^n_X$ of $S\times U^n_X$.
 The differentials are given by direct images along projections
 so they respect the support condition on $\mathrm{MW}$-correspondence
 associated with $T \in \mathcal F$ and make $C_*^{(T)}$
 into a subcomplex of $C_*$.

It is clear that $C_*$ is the filtering union of the subcomplexes
 $C_*^{(T)}$ for $T \in \mathcal F$ so it suffices to prove that,
 for a given $T \in \mathcal F$, the complex $C_*^{(T)}$ is split. We prove the result when $R=\ZZ$, the general statement follows after tensoring with $R$.
 Because $S$ is henselian and $T$ is finite over $S$, the scheme $T$
 is a finite sum of local henselian schemes. Consequently,
 the $t$-cover $p_T:U_T \rightarrow T$, which is in particular
 \'etale and surjective, admits a splitting $s$. It follows from \cite[Proposition 2.15]{Milne12} that $s$ is an isomorphism onto a connected component of $U_T$.
 We therefore obtain maps $s \times 1_{U_T^n}:U_T^n\to U^{n+1}_T$ such that $U^{n+1}_T=U_T^n\sqcup D^{n+1}_{T}$ for any $n\geq 0$ and a commutative diagram
 \[
 \xymatrix{U^n_T\ar[d]\ar[r] & (S\times U_X^{n+1})\setminus D^{n+1}_{T}\ar[d] \\
 U^{n+1}_T\ar[r]\ar[d] & S\times U_X^{n+1}\ar[d] \\
 U_T^n\ar[r] & S\times U_X^{n}}
 \]
in which the squares are Cartesian and the right-hand vertical maps are \'etale. By \'etale excision, we get isomorphisms
\[
\wCH^*_{U_T^n}(S\times U_X^{n},\omega_{U_X^{n}})\to \wCH^*_{U_T^n}((S\times U_X^{n+1})\setminus D^{n+1}_T,\omega_{U_X^{n+1}})
\]
and
\[
\wCH^*_{U_T^n}(S\times U_X^{n+1},\omega_{U_X^{n+1}})\to \wCH^*_{U_T^n}((S\times U_X^{n+1})\setminus D^{n+1}_T,\omega_{U_X^{n+1}}).
\]
Putting these isomorphisms together, we obtain an isomorphism
\[
\wCH^*_{U_T^n}(S\times U_X^{n},\omega_{U_X^{n}})\to \wCH^*_{U_T^n}(S\times U_X^{n+1},\omega_{U_X^{n+1}})
\]
that we can compose with the extension of support to finally obtain a homomorphism
\[
(s \times 1_{U_T^n})_*:\wCH^*_{U_T^n}(S\times U_X^{n},\omega_{U_X^{n}})\to \wCH^*_{U_T^{n+1}}(S\times U_X^{n+1},\omega_{U_X^{n+1}})
\]
yielding a contracting homotopy
\[
(s \times 1_{U_T^n})_*:C_n^{(T)} \rightarrow C_{n+1}^{(T)}.
\]
\end{proof}

\begin{num}\label{num:rightadjoint}
As in the classical case, one can derive from this lemma
 the existence of a left adjoint $\tilde a$ to the functor $\tO$.
 The proof is exactly the same as in the case of sheaves
 with transfers (cf. \cite[10.3.9]{CD3} for example) but we include it here for the convenience
 of the reader.

Let us introduce a notation. If $P$ is a presheaf on $\sm$,
 we define a presheaf with transfers
\begin{equation}\label{eq:df_gamma^!}
\tilde\gamma^!(P):Y \mapsto \Hom_{\psh}(\tilde\gamma_*(\prep Y),P).
\end{equation}
and we observe that  $\tilde\gamma^!$ is right adjoint to the functor $\tilde\gamma_*$. The latter, having both a left and a right adjoint, is then exact.
Given a natural transformation
$$
\phi:P \rightarrow \tilde\gamma_*\tilde\gamma^!(P)
$$
and smooth schemes $X$ and $Y$, 
we define a pairing
$$
P(X) \times \corr(Y,X) \rightarrow P(Y),
 (\rho,\alpha) \mapsto \langle \rho,\alpha \rangle_\phi
 :=[\phi_X(\rho)]_Y(\alpha)
$$
where $\phi_X(\rho)$ is seen as a natural transformation
 $\prep X \rightarrow P$.
 The following lemma is tautological.
\end{num}

\begin{lm}\label{lm:W-transfers_basic_existence}
Let $P$ be a presheaf on $\sm$.
 Then there is a bijection between the following data:
\begin{itemize}
\item Presheaves with transfers $\tilde P$ such that $\tilde\gamma_*(\tilde P)=P$;
\item Natural transformations $\phi:P \rightarrow \tilde\gamma_*\tilde\gamma^!(P)$
 such that:
\begin{enumerate}
\item[(W1)] $\forall \rho \in P(X), \langle \rho, Id_X \rangle_\phi=\rho$.
\item[(W2)] $\forall (\rho,\beta,\alpha)
 \in P(X) \times \corr(Y,X) \times \corr(Z,Y),
  \langle\langle \rho,\beta \rangle_\phi,\alpha\rangle_\phi
	 =\langle \rho,\beta \circ \alpha\rangle_\phi$;
\end{enumerate}
\end{itemize}
 according to the following rules:
\begin{align*}
\tilde P & \mapsto \big(P=\tilde\gamma_*(\tilde P) \xrightarrow{ad'}
 \tilde\gamma_*\tilde\gamma^!\tilde\gamma_*(\tilde P)=\tilde\gamma_*\tilde\gamma^!(P)\big) \\
(P,\langle.,\alpha\rangle_\phi) & \mapsfrom \phi,
\end{align*}
where $ad'$ is the unit map for the adjunction $(\tilde\gamma_*,\tilde\gamma^!)$. 
\end{lm}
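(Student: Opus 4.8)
The plan is to unwind the two rules and observe that conditions (W1) and (W2) are precisely the unit and composition axioms making the pairing $\langle-,-\rangle_\phi$ into the functoriality of a presheaf on $\smc$; everything else is forced by the construction of $\tilde\gamma^!$ in \ref{num:rightadjoint} and the $(\tilde\gamma_*,\tilde\gamma^!)$-adjunction.

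First I would make the unit $ad'$ explicit. Fix $\tilde P\in\psht$ with $\tilde\gamma_*(\tilde P)=P$. Unwinding definition \eqref{eq:df_gamma^!} of $\tilde\gamma^!$ together with the adjunction bijection $\Hom_\psh(\tilde\gamma_*F,Q)\cong\Hom_\psht(F,\tilde\gamma^!Q)$ (which sends $\psi\colon\tilde\gamma_*F\to Q$ to the transformation whose value at $Y$ takes $\xi\in F(Y)$, viewed via Yoneda as a map $\prep Y\to F$, to $\psi\circ\tilde\gamma_*(\xi)$), one finds that the unit $ad'_{\tilde P}\colon\tilde P\to\tilde\gamma^!\tilde\gamma_*(\tilde P)=\tilde\gamma^!(P)$ sends $\rho\in\tilde P(X)$ to the natural transformation $\tilde\gamma_*(\prep X)\to P$ whose component at a smooth scheme $Y$ is $\corr(Y,X)\to P(Y)$, $\alpha\mapsto\tilde P(\alpha)(\rho)$. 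Consequently, for $\phi=\tilde\gamma_*(ad'_{\tilde P})$ we obtain $\langle\rho,\alpha\rangle_\phi=\tilde P(\alpha)(\rho)$. With this formula, (W1) reads $\tilde P(Id_X)=\mathrm{id}$ and (W2) reads $\tilde P(\beta\circ\alpha)=\tilde P(\alpha)\circ\tilde P(\beta)$, both of which hold since $\tilde P$ is a presheaf on $\smc$; and $\phi$ is a natural transformation, being $\tilde\gamma_*$ applied to one. Hence the first rule is well-defined.

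Conversely, given a natural transformation $\phi$ satisfying (W1) and (W2), I would define $\tilde P$ by $\tilde P(X)=P(X)$ on objects and $\tilde P(\alpha)=\langle-,\alpha\rangle_\phi\colon P(X)\to P(Y)$ on a morphism $\alpha\in\corr(Y,X)$. Since $\phi_X$ is $R$-linear and each component $[\phi_X(\rho)]_Y\colon\corr(Y,X)\to P(Y)$ is a morphism of $R$-modules, $\tilde P(\alpha)$ is $R$-linear and $\alpha\mapsto\tilde P(\alpha)$ is additive, so $\tilde P$ is an additive presheaf of $R$-modules on $\smc$ as soon as it respects identities and composition --- which is exactly (W1) and (W2). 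It remains to verify $\tilde\gamma_*(\tilde P)=P$, i.e. $\tilde P(\tilde\gamma(f))=P(f)$ for every morphism $f\colon Y\to X$ of smooth schemes; here I would use the naturality of $\phi$ applied to $f$. Unwinding it (using that $\tilde\gamma_*\tilde\gamma^!(P)(f)$ is precomposition with the map $\tilde\gamma_*\prep Y\to\tilde\gamma_*\prep X$ induced by $\tilde\gamma(f)$), one gets $\langle P(f)(\rho),\beta\rangle_\phi=\langle\rho,\tilde\gamma(f)\circ\beta\rangle_\phi$ for all $\beta\in\corr(Z,Y)$; specializing to $Z=Y$, $\beta=Id_Y$ and using (W1) gives $P(f)(\rho)=\langle\rho,\tilde\gamma(f)\rangle_\phi=\tilde P(\tilde\gamma(f))(\rho)$.

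Finally I would check the two rules are mutually inverse, which is now immediate. Starting from $\tilde P$ one recovers $\tilde P(\alpha)=\langle-,\alpha\rangle_\phi$ because $\langle\rho,\alpha\rangle_\phi=\tilde P(\alpha)(\rho)$; starting from $\phi$, the transformation attached to the resulting $\tilde P$ has component at $Y$ given by $\alpha\mapsto\tilde P(\alpha)(\rho)=[\phi_X(\rho)]_Y(\alpha)$, hence equals $\phi$ since a natural transformation is determined by the values of its components on elements. There is no genuine obstacle; the one step that is not purely formal is the explicit identification of the unit $ad'$ in the second paragraph, after which the rest is routine bookkeeping --- which is why the statement is called tautological.
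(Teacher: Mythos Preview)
Your proof is correct and is precisely the routine unwinding that the paper has in mind; indeed the paper does not give a proof at all, simply declaring the lemma ``tautological''. Your explicit identification of the unit $ad'$ and the verification that naturality of $\phi$ together with (W1) forces $\tilde P(\tilde\gamma(f))=P(f)$ are exactly the bookkeeping details one needs to fill in.
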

Before going further, we note the following corollary of the previous result.
 
\begin{cor}\label{cor:prop:corr_main}
\begin{enumerate}
\item For any $t$-sheaf $F$ on $\sm$,
 $\tilde\gamma^!(F)$ is a $\mathrm{MW}$-$t$-sheaf.
\item Let $\alpha \in \corr(X,Y)$ be a finite $\mathrm{MW}$-correspondence
 and $p:W \rightarrow Y$ a $t$-cover.
 Then there exists a $t$-cover $q:W' \rightarrow X$
 and a finite $\mathrm{MW}$-correspondence $\hat \alpha:W' \rightarrow W$
 such that the following diagram commutes:
\begin{equation}\label{eq:lift_corr}
\begin{split}
\xymatrix@=18pt{
W'\ar^-{\hat \alpha}[r]\ar|/-8pt/\bullet[r]\ar_q[d] & W\ar^p[d] \\
X\ar_-\alpha[r]\ar|/-10pt/\bullet[r] & Y
}
\end{split}
\end{equation}
\end{enumerate}
\end{cor}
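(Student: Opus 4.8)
The plan is to deduce both statements from Lemma \ref{lm:corr_main} together with the adjunction formalism set up in \ref{num:rightadjoint}. For part (1), I would argue as follows. By definition, $\tilde\gamma^!(F)$ is a presheaf with $\mathrm{MW}$-transfers, so I only need to check that $\tilde\gamma_*\tilde\gamma^!(F) = \tilde\gamma^!(F)\circ\tilde\gamma$ is a $t$-sheaf on $\sm$. Unwinding \eqref{eq:df_gamma^!}, this presheaf sends a smooth $X$ to $\Hom_{\psh}(\tilde\gamma_*(\prep X),F)$. For $p:U\to X$ a $t$-cover, Lemma \ref{lm:corr_main} says the \v Cech complex of the $\prep{U^n_X}$ becomes exact after $t$-sheafification; applying the exact functor $\tilde\gamma_*$ (it has both adjoints, as noted in \ref{num:rightadjoint}) and using that $\tilde\gamma_*$ of a representable is already a $t$-sheaf is not quite what we want — instead I would apply $\Hom_{\psh}(-,F)$ directly. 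Since $F$ is a $t$-sheaf, the descent/sheaf condition for $\tilde\gamma_*\tilde\gamma^!(F)$ along $p$ is precisely the statement that
\[
0 \to \Hom_{\psh}\big(\tilde\gamma_*(\prep X),F\big) \to \Hom_{\psh}\big(\tilde\gamma_*(\prep U),F\big) \to \Hom_{\psh}\big(\tilde\gamma_*(\prep{U\times_X U}),F\big)
\]
is exact (the equalizer diagram), and more generally that $\Hom$ out of the \v Cech complex computes correctly. Because $\Hom_{\psh}(-,F)$ takes the $t$-locally exact \v Cech complex of Lemma \ref{lm:corr_main} to an exact complex exactly when $F$ is a $t$-sheaf — that is the content of $t$-descent — this gives the sheaf condition. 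So part (1) is essentially Lemma \ref{lm:corr_main} dualized against a sheaf.

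For part (2), I would apply part (1) to the free $t$-sheaf. Concretely, let $F = a(R(W))$ be the $t$-sheafification of the representable presheaf $R(W)$ on $\sm$, or better, work with $\tilde\gamma^!$ of it. The finite $\mathrm{MW}$-correspondence $\alpha \in \corr(X,Y)$ together with the $t$-cover $p:W\to Y$ should produce, via the transfer action on $\tilde\gamma^!(F)$ for a suitable sheaf $F$ built from $W$, a $t$-local section over $X$ of the presheaf $Y'\mapsto \corr(Y',W)$, sheafified; that section, being $t$-locally defined, is represented after passing to a $t$-cover $q:W'\to X$ by an actual element $\hat\alpha\in\corr(W',W)$, and the compatibility $p\circ\hat\alpha = \alpha\circ q$ (in the sense of the commuting diagram \eqref{eq:lift_corr}) is built into the construction. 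The mechanism is: $\prep W \to \prep Y$ is, after $t$-sheafification, an epimorphism of $\mathrm{MW}$-$t$-sheaves by Lemma \ref{lm:corr_main} (the $n=0$ term of the \v Cech resolution surjects), hence the image of $\alpha$ under $\prep X \to \prep Y \to a\prep Y$ lifts $t$-locally on $X$ to $a\prep W$; choosing a $t$-cover $q:W'\to X$ trivializing this lift and small enough that the lifted section comes from an honest correspondence in $\corr(W',W)$ produces $\hat\alpha$.

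The main obstacle I anticipate is the passage from a $t$-local section of the sheafified representable $a\prep W$ over $X$ to a genuine finite $\mathrm{MW}$-correspondence $\hat\alpha\in\corr(W',W)$ over a cover: $a$ need not be "freely" generated, so one must check that after refining the cover $q$ the section is in the image of $\corr(W',W)\to (a\prep W)(W')$. This is handled by the standard description of sheafification as a filtered colimit over hypercovers (two applications of the plus construction for the Nisnevich or étale topology), using that the site is finitary; concretely the lifted section over some $t$-cover $V\to X$ agrees, after a further refinement, with the image of a section of the presheaf $\prep W$, and one then takes $W' = V$ (or this refinement). I would also need to check that the two ways of mapping $W'$ into $Y$ — via $q$ then $\alpha$, and via $\hat\alpha$ then $p$ — agree as $\mathrm{MW}$-correspondences, not merely after $t$-sheafification; but since $\corr(W',Y)\to (a\prep Y)(W')$ is injective (as $\prep Y\to a\prep Y$ is a monomorphism, $\corr(-,Y)$ being a separated presheaf, which follows from the $n=1$ part of Lemma \ref{lm:corr_main} or is immediate from the definition via cycles), equality after sheafification implies equality on the nose. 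This injectivity point is the one subtlety worth stating carefully.
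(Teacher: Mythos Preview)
Your approach matches the paper's exactly: part~(1) is Lemma~\ref{lm:corr_main} read through the defining formula~\eqref{eq:df_gamma^!} (since $F$ is a $t$-sheaf, $\Hom_{\psh}(-,F)$ factors through $t$-sheafification, and the sheafified \v Cech complex is exact), and part~(2) is the observation that $p_*:\prep W\to\prep Y$ becomes an epimorphism after $t$-sheafification, which is the degree-zero exactness in Lemma~\ref{lm:corr_main}.

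One caution on the ``subtlety'' you flag at the end. Your claim that $\prep Y$ is a separated presheaf is not what the $n=1$ term of Lemma~\ref{lm:corr_main} provides (that lemma concerns exactness after sheafification, not injectivity of $\prep Y(U)\to\prep Y(V)$ for a cover), and for the \'etale topology it can genuinely fail: already for $Y=\spec k$ one has $\prep{\spec k}=\sKMW_0$, and $\mathrm{GW}(k)\to\mathrm{GW}(L)$ need not be injective for a separable extension $L/k$. Fortunately you do not need separatedness. If $p_*\hat\alpha$ and $q^*\alpha$ have the same image in $(a\prep Y)(W')$, then by the description of sheafification as $({-})^{++}$ they already agree in $\prep Y(W'')$ for some further refinement $W''\to W'$; absorbing this refinement into $q$ finishes the argument.
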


The first property is a direct consequence of Lemma
 \ref{lm:corr_main} given Formula \eqref{eq:df_gamma^!}.
 The second property is an application of the
 fact that $\prep W \rightarrow \prep X$ is an epimorphism
 of sheaves, obtained from the same proposition.

We are ready to state and prove the main lemma
 which proves the existence of the right adjoint $\tilde a$ to $\tO$.
 
\begin{lm}\label{lm:exists-Wtr}
Let $\tilde P$ be a $\mathrm{MW}$-presheaf and $P:=\tilde\gamma_*(\tilde P)$. Let $F$
be the $t$-sheaf associated with $P$ and let 
 $\tau:P \rightarrow F$ be the canonical natural transformation.

Then there exists a unique pair $(\tilde F,\tilde \tau)$
 such that:
\begin{enumerate}
\item $\tilde F$ is a $\mathrm{MW}$-$t$-sheaf such that $\tilde\gamma_*(\tilde F)=F$.
\item $\tilde \tau:\tilde P \rightarrow \tilde F$ is a natural transformation
 of $\mathrm{MW}$-presheaves such that the induced transformation
$$
P=\tilde\gamma_*(\tilde P) \xrightarrow{\tilde\gamma_*(\tilde \tau)} \tilde\gamma_*(\tilde F)=F
$$
coincides with $\tau$.
\end{enumerate}
\end{lm}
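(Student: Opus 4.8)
The plan is to use the dictionary of Lemma~\ref{lm:W-transfers_basic_existence}: a $\mathrm{MW}$-presheaf $\tilde F$ with $\tilde\gamma_*(\tilde F)=F$ is the same datum as a natural transformation $\phi_F\colon F\to\tilde\gamma_*\tilde\gamma^!(F)$ satisfying the axioms (W1) and (W2). Once such a $\tilde F$ is produced it is automatically a $\mathrm{MW}$-$t$-sheaf, since by construction $\tilde\gamma_*(\tilde F)=F$, which is a $t$-sheaf by hypothesis; so item (1) will follow from the mere existence of $\phi_F$. Thus the whole point is to descend the $\mathrm{MW}$-transfer structure of $\tilde P$ along the sheafification map $\tau$, and to do it compatibly with $\tau$.

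First I would construct $\phi_F$. The $\mathrm{MW}$-presheaf $\tilde P$ corresponds under Lemma~\ref{lm:W-transfers_basic_existence} to a transformation $\phi_P\colon P\to\tilde\gamma_*\tilde\gamma^!(P)$. Applying the functor $\tilde\gamma_*\tilde\gamma^!$ to $\tau\colon P\to F$ and precomposing with $\phi_P$ yields a map $\psi\colon P\to\tilde\gamma_*\tilde\gamma^!(F)$. By Corollary~\ref{cor:prop:corr_main}(1) the $\mathrm{MW}$-presheaf $\tilde\gamma^!(F)$ is a $\mathrm{MW}$-$t$-sheaf, i.e. $\tilde\gamma_*\tilde\gamma^!(F)$ is a $t$-sheaf; hence $\psi$ factors uniquely through the sheafification $\tau\colon P\to F$, defining $\phi_F\colon F\to\tilde\gamma_*\tilde\gamma^!(F)$ with $\phi_F\circ\tau=\tilde\gamma_*\tilde\gamma^!(\tau)\circ\phi_P$. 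This last identity is exactly what will make $\tau$ a morphism of $\mathrm{MW}$-presheaves.

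Next I would check that $\phi_F$ satisfies (W1) and (W2). Both are equalities of sections of the $t$-sheaf $F$ over a smooth scheme, hence may be verified after pulling back along a $t$-cover. Using the relation $\phi_F\circ\tau=\tilde\gamma_*\tilde\gamma^!(\tau)\circ\phi_P$, one sees that whenever the sections of $F$ involved lie in the image of $\tau$ the identities (W1) and (W2) reduce to the ones already known for $\phi_P$. For a general section $\rho\in F(X)$ one chooses a $t$-cover $W\to X$ on which $\rho$ lifts to $P$; for (W1) this already suffices since $F$ is a sheaf. For (W2), which also involves $\mathrm{MW}$-correspondences $\beta\in\corr(Y,X)$ and $\alpha\in\corr(Z,Y)$, one applies Corollary~\ref{cor:prop:corr_main}(2) twice to obtain $t$-covers $W'\to Y$ and $W''\to Z$ together with lifted $\mathrm{MW}$-correspondences $\hat\beta\colon W'\to W$ and $\hat\alpha\colon W''\to W'$ fitting into the corresponding commutative squares; restricting the whole identity to $W''$, one is reduced to (W2) for $\phi_P$ applied to the lift of $\rho$, $\hat\beta$ and $\hat\alpha$, and the conclusion on $Z$ follows because $F$ is a $t$-sheaf. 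I expect this verification of (W2) to be the technical heart of the argument: it is the only place where the lifting statement of Corollary~\ref{cor:prop:corr_main}(2) is genuinely used, and it requires a careful coordination of the $t$-covers of $X$, $Y$ and $Z$ so that both sides of (W2) can be compared to the presheaf-level pairing over one and the same cover.

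Finally, Lemma~\ref{lm:W-transfers_basic_existence} turns $\phi_F$ into a $\mathrm{MW}$-presheaf $\tilde F$ with $\tilde\gamma_*(\tilde F)=F$, which is then a $\mathrm{MW}$-$t$-sheaf, giving (1); and the identity $\phi_F\circ\tau=\tilde\gamma_*\tilde\gamma^!(\tau)\circ\phi_P$ says precisely that $\tau\colon P\to F$ underlies a morphism of $\mathrm{MW}$-presheaves $\tilde\tau\colon\tilde P\to\tilde F$ with $\tilde\gamma_*(\tilde\tau)=\tau$, giving (2). For uniqueness, suppose $(\tilde F',\tilde\tau')$ is another such pair; then $\tilde\gamma_*(\tilde F')=F$ and its $\mathrm{MW}$-structure corresponds to some $\phi_F'\colon F\to\tilde\gamma_*\tilde\gamma^!(F)$, and $\tilde\gamma_*(\tilde\tau')=\tau$ forces $\phi_F'\circ\tau=\tilde\gamma_*\tilde\gamma^!(\tau)\circ\phi_P=\phi_F\circ\tau$; since $\tilde\gamma_*\tilde\gamma^!(F)$ is a $t$-sheaf and $\tau$ is the sheafification, $\phi_F'=\phi_F$, whence $\tilde F'=\tilde F$ by the bijection of Lemma~\ref{lm:W-transfers_basic_existence}, and $\tilde\tau'=\tilde\tau$ because $\tilde\gamma_*$ is faithful.
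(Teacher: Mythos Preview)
Your proof is correct and follows essentially the same approach as the paper: construct $\phi_F$ from $\phi_P$ via the universal property of sheafification (using Corollary~\ref{cor:prop:corr_main}(1)), verify (W1)--(W2) by lifting sections along $t$-covers and $\mathrm{MW}$-correspondences along Corollary~\ref{cor:prop:corr_main}(2), and deduce uniqueness from the uniqueness of the factorization through $\tau$. The paper compresses the verification of (W2) into a single displayed computation and the phrase ``we deduce easily from this principle,'' whereas you spell out that two applications of Corollary~\ref{cor:prop:corr_main}(2) are needed; this is only a difference in level of detail, not in strategy.
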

\begin{proof}
Let us construct
 $\tilde F$. Applying Lemma \ref{lm:W-transfers_basic_existence} to $\tilde P$ and $P$, we get a 
 canonical natural transformation: $\psi:P \rightarrow \tilde\gamma_*\tilde\gamma^!(P)$.
 Applying point (1) of Corollary \ref{cor:prop:corr_main} and the fact that $F$ is the $t$-sheaf associated with $P$,
 there exists a unique natural transformation $\phi$ which
 fits into the following commutative diagram:
$$
\xymatrix@=20pt{
P\ar^-{\psi}[r]\ar_\tau[d]
 & \tilde\gamma_*\tilde\gamma^!(P)\ar^{\tilde\gamma_*\tilde\gamma^!(\tau)}[d] \\
F\ar_-\phi[r] & \tilde\gamma_*\tilde\gamma^!(F).
}
$$
To obtain the $\mathrm{MW}$-sheaf $\tilde F$ satisfying (1),
 it is sufficient according to
 Lemma \ref{lm:W-transfers_basic_existence} to prove
 that conditions (W1) and (W2) are satisfied for the
 pairing $\langle.,.\rangle_\phi$. Before proving this,
 we note that the existence of $\tilde \tau$ satisfying property (2)
 is equivalent to the commutativity of the above diagram.
 In particular, the unicity of $(\tilde F,\tilde \tau)$ comes
 from the unicity of the map $\phi$.

Therefore, we only need to prove (W1) and (W2) for $\phi$.
Consider a couple $(\rho,\alpha) \in F(X) \times \corr(Y,X)$.
Because $F$ is the $t$-sheaf associated with $P$,
 there exists a $t$-cover $p:W \rightarrow X$ 
 and a section $\hat \rho \in P(W)$
 such that $p^*(\rho)=\tau_W(\hat \rho)$.
 According to point (2) of Corollary \ref{cor:prop:corr_main},
 we get a $t$-cover $q:W' \rightarrow Y$
 and a correspondence $\hat \alpha \in \corr(W',W)$
  making the diagram \eqref{eq:lift_corr} commutative.
As $\phi$ is a natural transformation, we get
$$
q^*\langle \rho,\alpha \rangle_\phi
=\langle \rho,\alpha \circ q\rangle_\phi
=\langle \rho,p \circ \hat \alpha \rangle_\phi
=\langle p^*\rho,\hat \alpha \rangle_\phi
=\langle \tau_W(\hat \rho),\hat \alpha \rangle_\phi
=\langle \hat \rho,\hat \alpha \rangle_\psi.
$$
Because $q^*:F(X) \rightarrow F(W)$ is injective,
 we deduce easily from this principle the properties (W1) and (W2)
 for $\phi$ from their analog properties for $\psi$. 
\end{proof}

\begin{prop}\label{prop:exist_associated-W-t-sheaf}
\begin{enumerate}
\item The obvious forgetful functor $\tO:\sht \rightarrow \psht$ admits
 a left adjoint $\tilde a$ such that the following diagram commutes:
$$
\xymatrix@=12pt@C=16pt{
\psh\ar_a[d] & \psht\ar^-{\tilde a}[d]\ar_-{\tilde \gamma_*}[l] \\
\sh & \sht\ar_-{\tilde \gamma_*}[l]
}
$$
where $a$ is the usual $t$-sheafification functor with
 respect to the smooth site.
\item The category $\sht$ is a Grothendieck abelian category
 and the functor $\tilde a$ is exact.
\item The functor $\tilde \gamma_*$, appearing in the lower line
 of the preceding diagram, admits a left adjoint $\tilde \gamma^*$,
 and commutes with every limits and colimits.
\end{enumerate}
\end{prop}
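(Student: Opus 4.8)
The plan is to bootstrap the whole proposition from Lemma~\ref{lm:exists-Wtr}, which already produces, for every $\mathrm{MW}$-presheaf $\tilde P$, a $\mathrm{MW}$-$t$-sheaf $\tilde F$ together with a morphism $\tilde\tau\colon\tilde P\to\tilde F$ of $\mathrm{MW}$-presheaves lifting the $t$-sheafification $\tau\colon P\to F$ of $P=\tilde\gamma_*(\tilde P)$. For (1) I put $\tilde a(\tilde P):=\tilde F$ and establish the adjunction directly, which also takes care of functoriality of $\tilde a$. Let $\tilde G\in\sht$ and let $u\colon\tilde P\to\tO(\tilde G)$ be a morphism of $\mathrm{MW}$-presheaves. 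Applying $\tilde\gamma_*$ and using that $\tilde\gamma_*(\tilde G)$ is a $t$-sheaf, the map $\tilde\gamma_*(u)$ factors uniquely as $v\circ\tau$ with $v\colon F\to\tilde\gamma_*(\tilde G)$. The point is to check that $v$ is compatible with the $\mathrm{MW}$-transfer pairings carried by $\tilde F$ and $\tilde G$ in the sense of Lemma~\ref{lm:W-transfers_basic_existence}, i.e.\ that $v(\langle\rho,\alpha\rangle_{\phi_F})=\langle v(\rho),\alpha\rangle_{\phi_G}$ for $\rho\in F(X)$ and $\alpha\in\corr(Y,X)$. This is done exactly as in the last displayed computation of the proof of Lemma~\ref{lm:exists-Wtr}: one lifts $\rho$ to a section over a $t$-cover of $X$, lifts $\alpha$ along the cover by Corollary~\ref{cor:prop:corr_main}(2), reduces the identity to the corresponding compatibility for the pairing $\psi$ attached to $\tilde P$ (where it holds because $u$ is a morphism of $\mathrm{MW}$-presheaves), and concludes using the injectivity of $q^*$ on the sheaf $\tilde\gamma_*(\tilde G)$. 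This yields a morphism $\bar u\colon\tilde F\to\tilde G$ in $\sht$ with $\tilde\gamma_*(\bar u)=v$; it is unique because $\tilde\gamma_*$ is faithful, and $\bar u\circ\tilde\tau=u$ because both sides become $u$ after applying the faithful functor $\tilde\gamma_*$. The square $\tilde\gamma_*\circ\tilde a=a\circ\tilde\gamma_*$ holds by construction, since $\tilde\gamma_*(\tilde a(\tilde P))=F=a(P)=a(\tilde\gamma_*(\tilde P))$.

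For (2), $\sht$ is then a reflective subcategory of the Grothendieck abelian category $\psht$, with left-exact inclusion $\tO$. I would first show that $\tilde a$ is exact: being a left adjoint it is right exact, and for left-exactness I observe that a morphism of $\sht$ is a monomorphism if and only if its image under $\tO$ — hence, $\tilde\gamma_*$ being faithful, under $\tilde\gamma_*\circ\tO$ — is one; for the nontrivial direction one uses that $0\in\sht$, so the kernel computed in $\psht$ is already a kernel in $\sht$. Then, given a monomorphism $\tilde P'\to\tilde P$ in $\psht$, one applies the exact functors $\tilde\gamma_*$ (exact by~\ref{num:rightadjoint}, since it has both a left and a right adjoint) and $a$, and invokes the commutative square of (1), to conclude that $\tilde a(\tilde P')\to\tilde a(\tilde P)$ is a monomorphism. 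Granted that $\tilde a$ is exact, $\sht$ is abelian (kernels as in $\psht$, cokernels obtained by applying $\tilde a$ to cokernels in $\psht$), has all small colimits (apply $\tilde a$ to the corresponding colimit in $\psht$), is generated by the family $\{\tilde a(\prep X)\}_{X\in\smc}$, and has exact filtered colimits because these are computed by applying the exact functor $\tilde a$ to the exact filtered colimits of $\psht$; hence $\sht$ is Grothendieck abelian.

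For (3), a left adjoint of $\tilde\gamma_*\colon\sht\to\sh$ is the composite $\sh\to\psh\xrightarrow{\tilde\gamma^*}\psht\xrightarrow{\tilde a}\sht$, where the first arrow is the forgetful functor and $\tilde\gamma^*$ is the presheaf-level left adjoint of~\ref{num:rightadjoint}; the adjunction isomorphism is obtained by stringing together the adjunctions $(\tilde a,\tO)$ and $(\tilde\gamma^*,\tilde\gamma_*)$ and the full faithfulness of $\sh\hookrightarrow\psh$. For the compatibility with limits and colimits, one uses that limits of $\mathrm{MW}$-$t$-sheaves, computed in $\psht$, are again $\mathrm{MW}$-$t$-sheaves (apply $\tilde\gamma_*$ and use that limits of sheaves are sheaves), whereas colimits in $\sht$ are obtained by applying $\tilde a$ to the presheaf-level colimit; since $\tilde\gamma_*\colon\psht\to\psh$ commutes with all limits and colimits (again because it has both adjoints, by~\ref{num:rightadjoint}), the commutative square of (1) together with the analogous descriptions of limits and colimits in $\sh$ propagate this to the level of sheaves.

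The only genuinely non-formal step is the verification in (1) that the sheafified map $v$ respects the $\mathrm{MW}$-transfer structure, which amounts to re-running the cover-lifting argument of Lemma~\ref{lm:exists-Wtr}; everything else is a formal manipulation of adjunctions together with the exactness of the ordinary $t$-sheafification functor $a$ and of $\tilde\gamma_*$.
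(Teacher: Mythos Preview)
Your proof is correct and follows essentially the same approach as the paper's: both define $\tilde a$ via Lemma~\ref{lm:exists-Wtr}, deduce its exactness from the commuting square together with the exactness of $a$ and of $\tilde\gamma_*$ and the faithfulness of $\tilde\gamma_*$, and then handle~(3) formally. The paper is much terser---it simply cites the lemma for~(1) without spelling out the universal property you verify, and for colimit-preservation in~(3) it observes more slickly that $\tilde\gamma_*$, being exact and coproduct-preserving, automatically preserves all colimits---but the substance is the same.
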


\begin{proof}
The first point follows directly from the previous lemma:
 indeed,  with the notation of this lemma, we can put:
 $\tilde a(P)=\tilde F$.

For point (2), we first remark that the functor $\tilde a$,
 being a left adjoint, commutes with every colimits. Moreover,
the functor $a$ is exact and $\tilde\gamma_*:\psht\to \psh$ is also exact (Paragraph \ref{num:rightadjoint}).
 Therefore, $\tilde a$ is exact because of the previous commutative square
 and the fact that $\tilde \gamma_*$ is faithful. Then, we easily deduce that $\sht$ is a Grothendieck abelian category
 from the fact that $\psht$ is such a category.

The existence of the left adjoint $\tilde \gamma^*$ follows
 formally. Thus $\tilde \gamma_*$ commutes with every limits.
 Because  $\tilde \gamma_*$ is exact and commutes with
 arbitrary coproducts, we deduce that it commutes with arbitrary
 colimits, therefore proving point (3).
\end{proof}

\begin{rem}
The left adjoint $\tilde \gamma^*$ of $\tilde \gamma_*:\sht\to \sh$ can be computed as the composite
\[
\sh\stackrel{\mathcal O}\to \psh\stackrel{\tilde \gamma^*}\to \psht\stackrel{\tilde a}\to \sht.
\] 
One can also observe that, according to point (2), a family of generators of
 the Grothendieck abelian category $\sht$ is obtained by applying
 the functor $\tilde a$ to a family of generators of $\psht$. 
\end{rem}
 
\begin{df}\label{df:gen_sht}
Given any smooth scheme $X$, we put $\repR X=\tilde a\left(\prep X\right)$. 
\end{df}

In particular, for a smooth scheme $X$, $\tilde R_t(X)$
 is the $t$-sheaf associated with the presheaf $\prep X$,
 equipped with its canonical action of $\mathrm{MW}$-correspondences
 (Lemma \ref{lm:exists-Wtr}). The corresponding family, 
 for all smooth schemes $X$, generates the abelian category $\sht$.

\begin{num}\label{num:sht_monoidal}
One deduces from the monoidal structure on $\smc$ a monoidal
 structure on $\sht$ whose tensor product $\otr$ is uniquely characterized
 by the property that for any smooth schemes $X$ and $Y$:
\begin{equation}\label{eq:sht_monoidal}
\repR X \otr \repR Y=\repR {X\times Y}.
\end{equation}
Explicitly, the tensor product of any two sheaves $F,G\in \sht$ is obtained by applying $\tilde a$ to the presheaf tensor product $F\otimes G$ mentioned after Definition \ref{def:representablepsht}.
In particular, the bifunctor $\otr$ commutes with colimits
 and therefore, as the abelian category $\sht$ is a Grothendieck abelian
 category, the monoidal category $\sht$ is closed. The internal Hom
 functor is characterized by the property that for any $\mathrm{MW}$-$t$-sheaf $F$
 and any smooth scheme $X$,
$$
\uHom\big(\repR X,F\big)=F(X \times -).
$$
\end{num}

As a corollary of Proposition \ref{prop:exist_associated-W-t-sheaf},
 we obtain functors between the category of sheaves we have
 considered so far.
\begin{cor}\label{cor:adjunctions_corr}
\begin{enumerate}
\item There exists a commutative diagram of symmetric monoidal functors
$$
\xymatrix@=26pt{
\psh\ar_{\tilde \gamma^*}[d]\ar^{a_\nis}[r]
 & \shx{\nis}\ar^{\tilde \gamma^*_\nis}[d]\ar^{a_\et}[r]
  & \shx{\et}\ar^{\tilde \gamma^*_\et}[d] \\
\psht\ar_{\pi^*}[d]\ar^{\tilde a_\nis}[r]
 & \shtx{\nis}\ar^{\pi^*_\nis}[d]\ar^{\tilde a_\et}[r]
 & \shtx{\et}\ar^{\pi^*_\et}[d] \\
\pshtV\ar^{a^{tr}_\nis}[r] & \shtVx{\nis}\ar^{a^{tr}_\et}[r]
 & \shtVx{\et}
}
$$
which are all left adjoints of an obvious forgetful functor. Each of these functors
 respects the canonical family of abelian generators.
\item Let $t=\nis, \et$. 
Then the right adjoint functor $\tilde \gamma_*^t:\sht \rightarrow \sh$
 is faithful. If $2$ is invertible in $R$,
 the right adjoint functor $\pi_*^t:\shtV \rightarrow \sht$ is fully faithful.
\end{enumerate}
\end{cor}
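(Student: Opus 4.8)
The plan is to reduce the whole statement to Proposition~\ref{prop:exist_associated-W-t-sheaf}, to its classical analogue for $t$-sheaves with transfers (\cite[10.3.9]{CD3}), and to Lemma~\ref{lm:compare_transfers}; no new geometric input is needed, and the content is essentially a matter of organising adjunctions.

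\emph{Point (1).} I would first produce the middle row of the diagram, together with the vertical functors $\tilde\gamma^*_\nis$ and $\tilde\gamma^*_\et$, by applying Proposition~\ref{prop:exist_associated-W-t-sheaf} with $t=\nis$ and with $t=\et$; the bottom row and the functors $\pi^*_\nis,\pi^*_\et$ come in exactly the same way from the classical theory of $t$-sheaves with transfers, using the fact recalled in the excerpt that $\pi^*\colon\psht\to\pshtV$ is symmetric monoidal and right exact. The horizontal "étale" functors $a_\et,\tilde a_\et,a^{\mathrm{tr}}_\et$ are the composites "forget to a Nisnevich (pre)sheaf, then étale-sheafify"; they are left adjoint to the evident forgetful functors and compose with the Nisnevich sheafifications to recover direct étale sheafification. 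For the commutativity of the four inner squares I would observe that in each case the two composites around the square are left adjoints to the \emph{same} functor, because the forgetful functors $\tO$, $\mathcal O$ and $\mathcal O^{\mathrm{tr}}$ commute strictly with $\tilde\gamma_*$ and $\pi_*$ (all of these being simply "restriction along $\tilde\gamma$ or $\pi$"), so the squares commute by uniqueness of adjoints. Symmetric monoidality of each functor in the diagram I would check on the representable generators: every row carries its tensor product by sheafifying the presheaf tensor product (\ref{num:sht_monoidal}), each sheafification functor is monoidal, $\tilde\gamma^*$ and $\pi^*$ are monoidal at the presheaf level, and the identities $\repR X\otr\repR Y=\repR{X\times Y}$ and their analogues, together with cocontinuity of the tensor products, let one conclude by the standard argument. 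That each functor respects the canonical generators is then immediate from the same diagram chase: $\tilde\gamma^*$ sends the representable presheaf $R(X)$ to $\prep X$, $\pi^*$ sends $\prep X$ to the representable presheaf with transfers, the Nisnevich and étale sheafifications send a representable presheaf to the corresponding representable sheaf by definition, and for instance $\tilde\gamma^*_\nis R_\nis(X)=\tilde a_\nis\tilde\gamma^*R(X)=\repR X$.

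\emph{Point (2).} Here the key point is again that the forgetful functors are strictly compatible: one has $\mathcal O\circ\tilde\gamma_*^t=\tilde\gamma_*\circ\tO$ as functors $\sht\to\psh$, and $\tO\circ\pi_*^t=\pi_*\circ\mathcal O^{\mathrm{tr}}$ as functors $\shtV\to\psht$ (both sides of each being "restrict along $\tilde\gamma$", resp. "along $\pi$", then forget the sheaf condition). Since $\tO$, $\mathcal O$, $\mathcal O^{\mathrm{tr}}$ are fully faithful and $\tilde\gamma_*,\pi_*$ are faithful at the presheaf level (Lemma~\ref{lm:compare_transfers}), faithfulness of $\tilde\gamma_*^t$ and of $\pi_*^t$ follows at once. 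For fullness of $\pi_*^t$ when $2\in R^\times$: given $t$-sheaves with transfers $F_0,G_0$ and a map $g\colon\pi_*^tF_0\to\pi_*^tG_0$ in $\sht$, I would lift $\tO g$ along the full functor $\pi_*\colon\pshtV\to\psht$ (Lemma~\ref{lm:compare_transfers}) to a morphism $\mathcal O^{\mathrm{tr}}F_0\to\mathcal O^{\mathrm{tr}}G_0$; this is a morphism between $t$-sheaves with transfers, hence of the form $\mathcal O^{\mathrm{tr}}(h)$ for a unique $h\colon F_0\to G_0$ in $\shtV$ by full faithfulness of $\mathcal O^{\mathrm{tr}}$; and faithfulness of $\tO$ forces $\pi_*^t(h)=g$.

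The only thing that requires genuine care, and the only place where an error could creep in, is the bookkeeping: one must arrange the three sheafifications in each row, the two "forget-and-étale-sheafify" functors, and the two columns of vertical functors so that \emph{all} the squares commute simultaneously, which ultimately rests on the strict compatibility of the various forgetful functors with $\tilde\gamma_*$ and $\pi_*$. The genuinely non-formal ingredient, namely the existence of the étale theory of $\mathrm{MW}$-sheaves, is already built into Proposition~\ref{prop:exist_associated-W-t-sheaf}, which is precisely why that proposition was phrased for $t=\nis$ and $t=\et$ at once.
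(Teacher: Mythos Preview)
Your proposal is correct and follows essentially the same approach as the paper: point (1) is reduced to Proposition~\ref{prop:exist_associated-W-t-sheaf} and its classical analogue for sheaves with transfers, and point (2) is deduced from the induced commutative square of right adjoints together with the full faithfulness of the forgetful sheaf-to-presheaf functors and Lemma~\ref{lm:compare_transfers}. The paper's own justification is a terse two sentences; you have simply unpacked the ``formal consequence'' into the explicit bookkeeping with uniqueness of adjoints and compatibility of forgetful functors, which is exactly what underlies the paper's claim.
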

Indeed, the first point is a formal consequence of Proposition 
 \ref{prop:exist_associated-W-t-sheaf} and its analog for sheaves with transfers.
 The second point follows from the commutativity of the diagram in point (1),
 which induces an obvious commutative diagram for the right adjoint functors,
 the fact that the forgetful functor from sheaves to presheaves is always
 fully faithful and Lemma \ref{lm:compare_transfers}.

\section{Framed correspondences}\label{sec:framed}

\subsection{Definitions and basic properties}

The aim of this section is to make a link between the category of linear framed correspondences (after Garkusha-Panin-Voevodsky) and the category of $\mathrm{MW}$-presheaves. We start with a quick reminder on framed correspondences following \cite{Garkusha14}.

\begin{df}
Let $U$ be a smooth $k$-scheme and $Z\subset U$ be a closed subset of codimension $n$. A set of regular functions $\phi_1,\ldots,\phi_n\in k[U]$ is called a framing of $Z$ in $U$ if $Z$ coincides with the closed subset $\phi_1=\ldots=\phi_n=0$. 
\end{df} 

\begin{df}
Let $X$ and $Y$ be smooth $k$-schemes, and let $n\in\NN$ be an integer. An explicit framed correspondence $c=(U,\phi,f)$ of level $n$ from $X$ to $Y$ consists of the following data:
\begin{enumerate}
\item A closed subset $Z\subset \AA^n_X$ which is finite over $X$ (here, $Z$ is endowed with its reduced structure). 
\item An \'etale neighborhood $\alpha:U\to \AA^n_X$ of $Z$.
\item A framing $\phi=(\phi_1,\ldots,\phi_n)$ of $Z$ in $U$.
\item A morphism $f:U\to Y$.
\end{enumerate} 
The closed subset $Z$ is called the \emph{support} of the explicit framed correspondence $c=(U,\phi,f)$.
\end{df}

\begin{rem}
One could give an alternative approach to the above definition. A framed correspondence $(U,\phi,f)$ corresponds to a pair of morphisms $\phi:U\to \AA^n_k$ and $f:U\to Y$ yielding a unique morphism $\varphi:U\to \AA^n_Y$. The closed subset $Z\subset U$ corresponds to the preimage of $Y\times \{0\}\subset Y\times \AA^n_k=\AA^n_Y$. This correspondence is unique.
\end{rem}

\begin{rem}
Note that $Z$ is not supposed to map surjectively onto a component of $X$. For instance $Z=\emptyset$ is an explicit framed correspondence of level $n$, denoted by $0_n$. If $Z$ is non-empty, then an easy dimension count shows that $Z\subset \AA^n_X\to X$ is indeed surjective onto a component of $X$.
\end{rem}

\begin{rem}
Suppose that $X$ is a smooth connected scheme. By definition, an explicit framed correspondence of level $n=0$ is either a morphism of schemes $f:X\to Y$ or $0_0$.
\end{rem}

\begin{df}
Let $c=(U,\phi,f)$ and $c^\prime=(U^\prime,\phi^\prime,f^\prime)$ be two explicit framed correspondences of level $n\geq 0$. Then, $c$ and $c^\prime$ are said to be \emph{equivalent} if they have the same support and  there exists an open neighborhood $V$ of $Z$ in $U\times_{\AA^n_X} U^\prime$ such that the diagrams
\[
\xymatrix{U\times_{\AA^n_X} U^\prime\ar[r]\ar[d] & U^\prime\ar[d]^-{f^\prime} \\
U\ar[r]_-f & Y}
\]
and
\[
\xymatrix{U\times_{\AA^n_X} U^\prime\ar[r]\ar[d] & U^\prime\ar[d]^-{\phi^\prime} \\
U\ar[r]_-{\phi} & \AA^n_k}
\]
are both commutative when restricted to $V$. A \emph{framed} correspondence of level $n$ is an equivalence class of explicit framed correspondences of level $n$.
\end{df}

\begin{df}
Let $X$ and $Y$ be smooth schemes and let $n\in \NN$. We denote by $\mathrm{Fr}_n(X,Y)$ the set of framed correspondences of level $n$ from $X$ to $Y$ and by $\mathrm{Fr}_*(X,Y)$ the set $\sqcup_n \mathrm{Fr}_n(X,Y)$. Together with the composition of framed correspondences described in \cite[\S 2]{Garkusha14}, this defines a category whose objects are smooth schemes and morphisms are $\mathrm{Fr}_*(\_,\_)$. We denote this category by $\mathrm{Fr}_*(k)$ and refer to it as the \emph{category of framed correspondences}.
\end{df}

We now pass to the linear version of the above category following \cite[\S 7]{Garkusha14}, starting with the following observation. Let $X$ and $Y$ be smooth schemes, and let $c_{Z}=(U,\phi,f)$ be an explicit framed correspondence of level $n$ from $X$ to $Y$ with support $Z$ of the form $Z=Z_1\sqcup Z_2$. Let $U_1=U\setminus Z_2$ and $U_2=U\setminus Z_1$. For $i=1,2$, we get \'etale morphisms $\alpha_i:U_i\to X$ and morphisms $\phi_i:U_i\to \AA^n_k$, $f_i:U_i\to Y$ by precomposing the morphisms $\alpha,\phi$ and $f$ with the open immersion $U_i\to U$. Note that $U_i$ is an \'etale neighborhood of $Z_i$ for $i=1,2$ and that $c_{Z_i}=(U_i,\phi_i,f_i)$ are explicit framed correspondences of level $n$ from $X$ to $Y$ with support $Z_i$.

\begin{df}\label{def:linear}
Let $X$ and $Y$ be smooth schemes and let $n\in\NN$. Let 
\[
\ZZ \mathrm{F}_n(X,Y)=\ZZ \mathrm{Fr}_n(X,Y)/H
\]
where $H$ is the subgroup generated by elements of the form $c_Z-c_{Z_1}-c_{Z_2}$ where $Z=Z_1\sqcup Z_2$ is as above and $\ZZ \mathrm{Fr}_n(X,Y)$ is the free abelian group on $\mathrm{Fr}_n(X,Y)$. The category $\ZZ \mathrm{F}_*(k)$ of \emph{linear framed correspondences} is the category whose objects are smooth schemes and whose morphisms are 
\[
\mathrm{Hom}_{\ZZ \mathrm{F}_*(k)}(X,Y)=\bigoplus_{n\in \NN} \ZZ \mathrm{Fr}_n(X,Y).
\]
\end{df}

\begin{rem}
Note that there is an obvious functor $\iota:\mathrm{Fr}_*(k)\to \ZZ \mathrm{F}_*(k)$ with $\iota(0_n)=0$ for any $n\in\NN$.
\end{rem}

The stage being set, we now compare the category of finite $\mathrm{MW}$-correspondences with the above categories.

Let $U$ be a smooth $k$-scheme and let $\phi:U\to \AA^n_k$ be a morphism corresponding to (nonzero) global sections $\phi_i\in \OO(U)$. Each section $\phi_i$ can be seen as an element of $k(U)^\times$ and defines then an element of $\KMW_1(k(U))$. Let $\vert \phi_i\vert$ be the support of $f_i$, i.e. its vanishing locus, and let $Z= \vert \phi_1\vert \cap \ldots \cap \vert \phi_n\vert $. Consider the residue map
\[
d:\KMW_1(k(U))\to \bigoplus_{x\in U^{(1)}} \KMW_0(k(x),\omega_x).
\]
Then, $d(\phi_i)$ defines an element supported on $\vert \phi_i\vert$. As it is a boundary, it defines a cycle $Z(\phi_i)\in H^1_{\vert \phi_i\vert}(U,\sKMW_1)$. Now, we can consider the intersection product
\[
\H^1_{\vert \phi_1\vert}(U,\sKMW_1)\times \ldots \times \H^1_{\vert \phi_n\vert}(U,\sKMW_1)\to \H^n_{Z}(U,\sKMW_n)
\]
to get an element $Z(\phi_1)\cdot \ldots \cdot Z(\phi_n)$ that we denote by $Z(\phi)$.

\begin{lm}\label{lem:functor}
Any explicit framed correspondence $c=(U,\phi,f)$ induces a finite $\mathrm{MW}$-correspondence $\alpha(c)$ from $X$ to $Y$. Moreover, two equivalent explicit framed correspondences $c$ and $c^\prime$ induce the same finite $\mathrm{MW}$-correspondence.
\end{lm}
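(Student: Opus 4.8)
The plan is to realise the class $Z(\phi)$ as an element of $\corr(X,Y)$ by a single proper push-forward, and then to check invariance under the equivalence relation by replacing $U$ and $U'$ by a common étale refinement.

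\emph{Constructing $\alpha(c)$.} Let $Z\subset\AA^n_X$ be the support of $c$ and set $Z_U:=\alpha^{-1}(Z)=V(\phi_1,\dots,\phi_n)\subset U$; the étale neighbourhood hypothesis makes $\alpha$ restrict to an isomorphism $Z_U\xrightarrow{\sim}Z$, so $Z_U$ is finite over $X$. I would take the class $Z(\phi)=Z(\phi_1)\cdots Z(\phi_n)\in\H^n_{Z_U}(U,\sKMW_n)=\wCH^n_{Z_U}(U)$ constructed in the discussion preceding the statement, note that the composite $U\xrightarrow{\alpha}\AA^n_X\to X$ presents $U$ as a smooth $X$-scheme of relative dimension $n$ whose relative canonical sheaf $\omega_{U/X}=\alpha^\ast\omega_{\AA^n_X/X}$ is \emph{canonically} trivialised by $\alpha^\ast(dt_1\wedge\dots\wedge dt_n)$ ($t_i$ the coordinates of $\AA^n$), and use this trivialisation to regard $Z(\phi)$ as a class in $\wCH^n_{Z_U}(U,\omega_{U/X})$. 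Then, setting $\beta:=(q\circ\alpha,f)\colon U\to X\times Y$ with $q\colon\AA^n_X\to X$ the projection, I would observe that $\beta$ is a morphism over $X$ between smooth $X$-schemes and is finite on $Z_U$, hence maps $Z_U$ onto a reduced closed $T\subset X\times Y$ which is finite over $X$, i.e. one of the subschemes over which the colimit in \eqref{eq:def_corr} runs; since $Z(\phi)$ is supported on $Z_U$, the proper push-forward $\beta_\ast Z(\phi)$ is defined. A degree-and-twist computation — $\beta$ goes between $X$-schemes of relative dimensions $n$ and $d:=\dim Y$, and $\omega_{(X\times Y)/X}=\omega_Y$ absorbs the twist $\omega_{U/X}$ — then shows $\beta_\ast Z(\phi)\in\wCH^d_T(X\times Y,\omega_Y)$, and I would define $\alpha(c)\in\corr(X,Y)$ to be its image in the colimit. (Sanity checks: $c=0_n$ gives $0$, and for $n=0$ a morphism $f\colon X\to Y$ gives $\tilde\gamma(f)$.)

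\emph{Invariance under equivalence.} Given $c'=(U',\phi',f')$ equivalent to $c$ with the same support $Z$, let $V\subseteq U\times_{\AA^n_X}U'$ be the open neighbourhood of $Z$ on which the two diagrams of the equivalence condition commute, and let $\psi\colon V\to U$, $\psi'\colon V\to U'$ be the (étale) projections, so that $\alpha\psi=\alpha'\psi'$, $\psi^\ast\phi=\psi'^\ast\phi'$ and $f\psi=f'\psi'$ on $V$. The one subtlety is that $V(\psi^\ast\phi)=\psi^{-1}(Z_U)$ need not equal the copy $Z_V$ of $Z$ in $V$; but $Z_V\xrightarrow{\sim}Z_U$ is a section of the étale separated morphism $\psi^{-1}(Z_U)\to Z_U$, so by \cite[Proposition 2.15]{Milne12} it is open and closed there, and after deleting the complementary clopen (closed in $V$, disjoint from $Z$) I may assume $V(\psi^\ast\phi)=Z_V$. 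Then $(V,\psi^\ast\phi,f\psi)$ with étale neighbourhood $\alpha\psi$ is an explicit framed correspondence, its associated morphism to $X\times Y$ is $\beta\circ\psi$, and since the formation of residues and of the intersection product commutes with the étale pull-back $\psi$, and $\psi$ restricts to an isomorphism over $Z_U$ compatibly with the coordinate trivialisations of $\omega_{V/X}$ and $\omega_{U/X}$, one gets $\beta_\ast\psi_\ast\psi^\ast Z(\phi)=\beta_\ast Z(\phi)$, i.e. $\alpha(V,\psi^\ast\phi,f\psi)=\alpha(c)$. Running the identical argument through $\psi'$ — legitimate because $\psi'^{-1}(Z_{U'})=V(\psi^\ast\phi)=Z_V$ after the shrinking — gives $\alpha(V,\psi^\ast\phi,f\psi)=\alpha(c')$, whence $\alpha(c)=\alpha(c')$.

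\emph{Main obstacle.} I expect the only genuine work to be the degree-and-twist verification in the first step: $Z(\phi)$ a priori lives in the \emph{untwisted} group $\H^n_{Z_U}(U,\sKMW_n)$, and one must check that $\beta_\ast$ carries it into precisely the group $\wCH^{\dim Y}_T(X\times Y,\omega_Y)$ appearing in \eqref{eq:def_corr}; this is where the coordinate trivialisation of $\omega_{U/X}$ is indispensable, together with $\omega_{(X\times Y)/X}=\omega_Y$ and the twisted proper push-forward for Chow--Witt groups from \cite{Calmes14b}. Everything else — compatibility of residues and of the intersection product with étale pull-back, the identity $\psi_\ast\psi^\ast=\mathrm{id}$ on cycles supported where $\psi$ is an isomorphism, and the fact that $T$ is of the required type — is routine from the Milnor--Witt cycle formalism.
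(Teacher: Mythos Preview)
Your proposal is correct and follows essentially the same route as the paper: form $Z(\phi)$, use the canonical trivialisation of $\omega_{U/X}$ (the paper writes this as $\omega_U\otimes(p_X\alpha)^*\omega_X^\vee$) to insert the twist, and push forward along $(p_X\circ\alpha,f)$; for invariance, pass to the common étale refinement $V\subset U\times_{\AA^n_X}U'$ and compare the two pulled-back classes there before pushing forward to $X\times Y$. You are in fact slightly more explicit than the paper on one point---the paper silently writes the support on $V$ as $Z$, whereas you observe that $\psi^{-1}(Z_U)$ need not equal $Z_V$ a priori and shrink $V$ via the section-of-étale-map argument; this is a genuine (minor) refinement rather than a different strategy.
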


\begin{proof}
Let us start with the first assertion. If $Z$ is empty, its image is defined to be zero. If $c$ is of level $0$, then it corresponds to a morphism of schemes and we use the functor $\sm\to \smc $ to define the image of $c$. We thus suppose that $Z$ is non-empty (thus finite and surjective on some components of $X$) of level $n\geq 1$.
Consider the following diagram
\[
\xymatrix{ & U\ar[d]_-\alpha\ar[r]^-{(\phi,f)} & \AA^n_Y \\
Z\ar[r]\ar[ru] &  \AA^n_X\ar[d]_-{p_X} & \\
 & X & }
\]
defining an explicit framed correspondence $(U,\phi,f)$ of level $n$. The framing $\phi$ defines an element $Z(\phi)\in \H^n_{Z}(U,\sKMW_n)$ as explained above. Now, $\alpha$ is \'etale and therefore induces an isomorphism  $\alpha^*\omega_{\AA^n_X}\simeq \omega_U$. Choosing the usual orientation for $\AA^n_k$, we get an isomorphism $\OO_{\AA^n_X}\simeq \omega_{\AA^n_X}\otimes (p_X)^*\omega_X^\vee$ and therefore an isomorphism
\[
\OO_U\simeq \alpha^*(\OO_{\AA^n_X})\simeq \alpha^*(\omega_{\AA^n_X}\otimes p_X^*\omega_X^\vee)\simeq \omega_U\otimes (p_X\alpha)^* \omega_X^\vee.
\]
We can then see $Z(\phi)$ as an element of the group $\H^n_{Z}(U,\sKMW_n,\omega_U\otimes (p_X\alpha)^* \omega_X^\vee)$. Consider next the map $(p_X\alpha,f):U\to X\times Y$ and the image $T$ of $Z$ under the map of underlying topological spaces. It follows from \cite[Lemma 1.4]{Mazza06} that $T$ is closed, finite and surjective over (some components of) $X$. Moreover, the morphism $Z\to T$ is finite and it follows that we have a push-forward homomorphism
\[
(p_X\alpha,f)_*:\H^n_{Z}(U,\sKMW_n,\omega_U\otimes (p_X\alpha)^* \omega_X^\vee)\to \H^n_T(X\times Y,\sKMW_n,\omega_{X\times Y/X})
\] 
yielding, together with the canonical isomorphism $\omega_{X\times Y/X}\simeq \omega_Y$, a finite Chow-Witt correspondence $\alpha(c):=(p_X\alpha,f)_*(Z(\phi))$ between $X$ and $Y$.

Suppose next that $c=(U,\phi,f)$ and $c^\prime=(U^\prime,\phi^\prime,f^\prime)$ are two equivalent explicit framed correspondences of level $n$. Following the above construction, we obtain two cocycles $\tilde\alpha(c)\in \H^n_{Z}(U,\sKMW_n,\omega_U\otimes (p_X\alpha)^* \omega_X^\vee)$ and $\tilde\alpha(c^\prime)\in \H^n_{Z}(U^\prime,\sKMW_n,\omega_{U^\prime}\otimes (p_X\alpha^\prime)^* \omega_X^\vee)$. Now, the pull-backs along the projections 
\[
\xymatrix{U\times_{\AA^n_X} U^\prime\ar[r]^-{p_2}\ar[d]_-{p_1} & U^\prime \\
U & }
\]
yield homomorphisms
\[
p_1^*:\H^n_{Z}(U,\sKMW_n,\omega_U\otimes (p_X\alpha)^* \omega_X^\vee)\simeq \H^n_{p_1^{-1}(Z)}(U\times_{\AA^n_X} U^\prime,\sKMW_n,\omega_{U\times_{\AA^n_X} U^\prime}\otimes (p_X\alpha p_1)^* \omega_X^\vee)
\] 
and
\[
p_2^*:\H^n_{Z}(U^\prime,\sKMW_n,\omega_{U^\prime}\otimes (p_X\alpha^\prime)^* \omega_X^\vee)\simeq \H^n_{p_2^{-1}(Z)}(U\times_{\AA^n_X} U^\prime,\sKMW_n,\omega_{U\times_{\AA^n_X} U^\prime}\otimes (p_X\alpha p_2)^* \omega_X^\vee),
\] 
while the pull-back along the open immersion $i:V\to U\times_{\AA^n_X} U^\prime$ induces homomorphisms 
\[
i^*:\H^n_{p_1^{-1}(Z)}(U\times_{\AA^n_X} U^\prime,\sKMW_n,\omega_{U\times_{\AA^n_X} U^\prime}\otimes (p_X\alpha p_1)^* \omega_X^\vee)\simeq \H^n_Z(V,\sKMW_n, \omega_V\otimes (p_X\alpha p_1i)^* \omega_X^\vee)
\]
and
\[
i^*:\H^n_{p_2^{-1}(Z)}(U\times_{\AA^n_X} U^\prime,\sKMW_n,\omega_{U\times_{\AA^n_X} U^\prime}\otimes (p_X\alpha p_2)^* \omega_X^\vee)\simeq \H^n_Z(V,\sKMW_n, \omega_V\otimes (p_X\alpha p_2i)^* \omega_X^\vee).
\]
Note that $p_X\alpha p_2=p_X\alpha p_1$ and that $i^*p_1^*(\tilde\alpha(c))=i^*p_2^*(\tilde\alpha(c^\prime))$ by construction. Pushing forward along $V\to U\times_{\AA^n_X} U^\prime\to U\to X\times Y$, we get the result.
\end{proof}

\begin{ex}\label{ex:stable}
Let $X$ be a smooth $k$-scheme. Consider the explicit framed correspondence $\sigma_X$ of level $1$ from $X$ to $X$ given by $(\AA^1_X,q,p_X)$ where $q:\AA^1_X=\AA^1\times X\to \AA^1$ is the projection to the first factor and $p_X:\AA^1_X\to X$ is the projection to the second factor. We claim that $\alpha(\sigma_X)=Id\in \smc (X,X)$. To see this, observe that we have a commutative diagram
\[
\xymatrix{\AA^1_X\ar[r]^-{(p_X,p_X)}\ar[d]_-{p_X} & X\times X \\
X\ar[ru]_-{\triangle} & }
\]
where $\triangle$ is the diagonal map. Following the process of the above lemma, we start by observing that $Z(q)\in \H^1_X(\AA^1_X,\sKMW_1)$ is the class of $\langle 1\rangle\otimes \overline t\in \sKMW_0(k(X),(\mathfrak m/\mathfrak m^2)^*)$ where $\mathfrak m$ is the maximal ideal corresponding to $X$ in the appropriate local ring and $t$ is a coordinate of $\AA^1$. Now, we choose the canonical orientation of $\AA^1$ and the class of $Z(q)$ corresponds then to the class of $\langle 1\rangle \in \sKMW_0(k(X))$ in $\H^1_X(\AA^1_X,\sKMW_1,\omega _{(\AA^1_X/X)})$. Its push-forward under 
\[
(p_X)_*:\H^1_X(\AA^1_X,\sKMW_1,\omega _{(\AA^1_X/X)})\to \H^0(X,\sKMW_0)
\]
is the class of $\langle 1\rangle$ and the claim follows from the fact that $(p_X,p_X)_*=\triangle_*(p_X)_*$ and the definition of the identity in $\smc (X,X)$.
\end{ex}

\begin{prop}\label{prop:bunchoffunctors}
The assignment $c=(U,\phi,f)\mapsto \alpha(c)$ made explicit in Lemma \ref{lem:functor} define functors $\alpha:\mathrm{Fr}_*(k)\to \smc $ and $\alpha^\prime:\ZZ \mathrm{F}_*(k)\to \smc$ such that we have a commutative diagram of functors
\[
\xymatrix{ &  \mathrm{Fr}_*(k)\ar[dd]_-{\alpha}\ar[rd]^-{\iota} & \\ \sm\ar[ru]\ar[rd]_-{\tilde\gamma} &  & \ZZ \mathrm{F}_*(k)\ar[ld]^-{\alpha^\prime} \\ & \smc . & }
\]
\end{prop}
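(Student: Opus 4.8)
The plan is to verify three things: (i) the assignment $c \mapsto \alpha(c)$ is compatible with composition of framed correspondences, so that it defines a functor $\alpha \colon \mathrm{Fr}_*(k) \to \smc$; (ii) this functor factors through $\iota$ to give $\alpha'$ on the linearization $\ZZ\mathrm{F}_*(k)$; and (iii) the triangle with $\tilde\gamma$ commutes. By Lemma \ref{lem:functor} the assignment is already well-defined on equivalence classes of explicit framed correspondences and sends $0_n$ to $0$, so the content is functoriality.

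For functoriality, recall that the composite of two explicit framed correspondences $c = (U,\phi,f)$ of level $n$ from $X$ to $Y$ and $c' = (U',\phi',g)$ of level $m$ from $Y$ to $Z$ is built (following \cite[\S 2]{Garkusha14}) on the \'etale neighborhood $U'' = U \times_Y U'$ of the appropriate support inside $\AA^{n+m}_X$, with framing $(\phi \circ \mathrm{pr}_U, \phi' \circ \mathrm{pr}_{U'})$ and morphism $g \circ \mathrm{pr}_{U'}$. First I would check that $Z(\phi \oplus \phi')$ computed on $U''$ is, up to the canonical orientation identifications, the exterior product $Z(\phi) \times Z(\phi')$ of the two Milnor-Witt cohomology classes; this is essentially the multiplicativity of the classes attached to framings under the intersection product $\H^n_{\vert\phi\vert}(U,\sKMW_n) \times \H^m_{\vert\phi'\vert}(U',\sKMW_m) \to \H^{n+m}(U'',\sKMW_{n+m})$, combined with base change of the residue classes along the \'etale maps. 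Then I would show that pushing forward this product class along $U'' \to X \times Z$ agrees with the composition product $\alpha(c') \circ \alpha(c)$ in $\smc$; this reduces to the projection formula and the compatibility of proper push-forward with the intersection/composition product of Chow-Witt correspondences established in \cite{Calmes14b}, together with the fact that the intermediate scheme $T_U \times_Y T_{U'}$ (supports) is finite over $X$ so that the composition in $\smc$ is indeed computed by the naive formula. The identity $\alpha(\sigma_X) = \mathrm{Id}$ is already checked in Example \ref{ex:stable}, and more generally $\alpha$ sends a level-$0$ correspondence (a morphism $h \colon X \to Y$) to $\tilde\gamma(h)$ by the very definition in the proof of Lemma \ref{lem:functor}; these two facts give the identity axiom and simultaneously establish commutativity of the left triangle $\alpha \circ (\sm \to \mathrm{Fr}_*(k)) = \tilde\gamma$.

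Granting functoriality of $\alpha$, the functor $\alpha'$ is obtained by the universal property of the linearization: since $\alpha$ is additive on coproduct decompositions of supports --- precisely the relation $\alpha(c_Z) = \alpha(c_{Z_1}) + \alpha(c_{Z_2})$ for $Z = Z_1 \sqcup Z_2$, which follows from the additivity of $\H^n_Z(-,\sKMW_n)$ in the support $Z = Z_1 \sqcup Z_2$ and the fact that push-forward is computed componentwise on $T = T_1 \sqcup T_2$ --- the map $\ZZ\mathrm{Fr}_n(X,Y) \to \smc(X,Y)$ kills the subgroup $H$ of Definition \ref{def:linear}, hence descends to $\ZZ\mathrm{F}_n(X,Y)$, and summing over $n$ gives $\alpha' \colon \ZZ\mathrm{F}_*(k) \to \smc$ with $\alpha' \circ \iota = \alpha$. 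Commutativity of the right triangle $\alpha' \circ \iota = \alpha$ is then immediate, and commutativity of the whole diagram follows by pasting.

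The main obstacle I anticipate is step (i), the compatibility with composition: one must match Garkusha's combinatorial recipe for composing framed correspondences (with its fiber products of \'etale neighborhoods and concatenation of framings) against the Chow-Witt composition product, keeping careful track of the orientation isomorphisms $\OO_U \simeq \omega_U \otimes (p_X\alpha)^*\omega_X^\vee$ introduced via the chosen trivialization of $\omega_{\AA^n_k}$ --- in particular checking that the canonical orientations on $\AA^n$ and $\AA^m$ multiply to the canonical orientation on $\AA^{n+m}$, so that no spurious unit in $\sKMW_0$ or sign appears. This is where the bookkeeping is heaviest; the rest is formal.
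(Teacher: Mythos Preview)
Your proposal is correct and follows essentially the same route as the paper: you check additivity on disjoint supports to descend to $\ZZ\mathrm{F}_*(k)$, and you verify compatibility with composition by identifying $Z(\phi\circ\mathrm{pr}_U,\phi'\circ\mathrm{pr}_{U'})$ with the product $(\mathrm{pr}_U)^*Z(\phi)\cdot(\mathrm{pr}_{U'})^*Z(\phi')$ on the fiber product $U\times_Y U'$ and then comparing push-forwards. The only cosmetic difference is that the paper phrases the push-forward step via an explicit base change identity $(p_{X\times Y})^*(p_X\alpha,f)_*=\rho_*(\mathrm{pr}_U)^*$ from \cite[Proposition 3.2, Remark 3.3]{Calmes14b} rather than invoking a projection formula, and it organizes the argument through two Cartesian diagrams; your anticipated orientation bookkeeping is exactly the content hidden in those base change and push-forward identities.
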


\begin{proof}
For any smooth schemes $X,Y$ and any integer $n\geq 0$, we have a well-defined map $\alpha:\mathrm{Fr}_n(X,Y)\to \smc(X,Y)$ and therefore a well-defined map $\ZZ \mathrm{Fr}_n(X,Y)\to \smc(X,Y)$. Let $c=(U,\phi,f)$ be an explicit framed correspondence of level $n$ with support $Z$ of the form $Z=Z_1\sqcup Z_2$. Let $c_i=(U_i,\phi_i,f_i)$ be the explicit framed correspondences with support $Z_i$ obtained as in Definition \ref{def:linear}. By construction, we get $\alpha(c)=\alpha(c_1)+\alpha(c_2)$ and it follows that $\alpha:\mathrm{Fr}_n(X,Y)\to \smc(X,Y)$ induces a homomorphism $\alpha^\prime:\ZZ \mathrm{F}_n(X,Y)\to \smc(X,Y)$.

 It remains then to show that the functors $\alpha:\mathrm{Fr}_k\to \smc$ and $\alpha^\prime:\ZZ \mathrm{F}_*(k)\to \smc$ are well-defined, which amounts to prove that the respective compositions are preserved. Suppose then that $(U,\phi,f)$ is an explicit framed correspondence of level $n$ between $X$ and $Y$, and that $(V,\psi,g)$ is an explicit framed correspondence of level $m$ between $Y$ and $Z$. We use the diagram
\begin{equation}\label{eqn:diag1}
\xymatrix{W\ar[r]^-{pr_V}\ar[d]\ar@/_3pc/[dd]_-{pr_U} & V\ar[d]_-\beta\ar[r]^-{\psi}\ar@/_/[rrd]_-g & \AA^m &  \\
U\times \AA^m\ar[r]_-{f\times Id}\ar[d] & Y\times \AA^m\ar[d]_-{p_Y} & & Z \\
U\ar[r]_-f\ar[d]_-{p_X\alpha}\ar@/_1pc/[rrd]_-\phi & Y & &  \\
X & & \AA^n & }
\end{equation} 
%
%
%
in which the squares are all cartesian. The composition of $(U,\phi,f)$ with $(V,\psi,g)$ is given by $(W,(\phi\circ pr_U,\psi\circ pr_V),g\circ pr_V)$. 

On the other hand, the morphisms $(p_X\alpha,f)\circ pr_U:W\to X\times Y$ and $(p_Y\beta,g)\circ pr_V:W\to Y\times Z$ yield a morphism $\rho:W\to X\times Y\times Z$ and then a diagram
\begin{equation}\label{eqn:diag3}
\xymatrix{ & W\ar[r]^-{pr_V}\ar[d]_-\rho & V\ar[d]^-{(p_Y\beta,g)} \\ 
W\ar[r]^-\rho\ar[d]_-{pr_U} & X\times Y\times Z\ar[d]_-{p_{X\times Y}}\ar[r]^-{p_{Y\times Z}} & Y\times Z\ar[d] \\
U\ar[r]_-{(p_X\alpha,f)} & X\times Y\ar[r] & Y}
\end{equation}
in which all squares are cartesian. By base change (\cite[Proposition 3.2, Remark 3.3]{Calmes14b}), we have $(p_{X\times Y})^*(p_X\alpha,f)_*=\rho_*(pr_U)^*$ and $(p_{Y\times Z})^*(p_Y\beta,g)_*=\rho_*(pr_V)^*$.  By definition of the pull-back and the product, we have $(pr_U)^*(Z(\phi))=Z(\phi\circ pr_U)$ and $(pr_V)^*(Z(\psi))=Z(\psi\circ pr_V)$. It follows that 
\[
Z(\phi\circ pr_U,\psi\circ pr_V)=(pr_U)^*(Z(\phi))\cdot (pr_V)^*(Z(\psi)).
\]
Finally, observe that there is a commutative diagram
\[
\xymatrix{W\ar@{=}[d]\ar[rr]^-{g\circ pr_V} & & Z \\
W\ar[r]^-\rho\ar@{=}[d] & X\times Y\times Z\ar[r]^-{p_{X\times Z}} & X\times Z\ar[d]\ar[u] \\
W\ar[rr]_-{p_X\circ \alpha\circ pr_U} & & X.}
\]
Using these ingredients, we see that the composition is preserved.
\end{proof}

\begin{rem}
Note that the functor $\alpha^\prime:\ZZ \mathrm{F}_*(k)\to \smc$ is additive. It follows from Example \ref{ex:stable} that it is not faithful.
\end{rem}

\subsection{Presheaves}

Let $X$ be a smooth scheme. Recall from Example \ref{ex:stable} that we have for any smooth scheme $X$ an explicit framed correspondence $\sigma_X$ of level $1$ given by the triple $(\AA^1_X,q,p_X)$ where $q$ and $p_X$ are respectively the projections onto $\AA^1_k$ and $X$. The following definition can be found in \cite[\S 1]{Garkusha15}.

\begin{df}
Let $R$ be a ring. A presheaf of $R$-modules $F$ on $\ZZ \mathrm{F}_*(k)$ is \emph{quasi-stable} if for any smooth scheme $X$, the pull-back map $F(\sigma_X):F(X)\to F(X)$ is an isomorphism. A quasi-stable presheaf is \emph{stable} if $F(\sigma_X):F(X)\to F(X)$ is the identity map for any $X$. We denote by $\pshfr$ the category of presheaves on $\ZZ \mathrm{F}_*(k)$, by $\mathcal Q\pshfr$ the category of quasi-stable presheaves on $\ZZ \mathrm{F}_*(k)$ and by $\mathcal S\pshfr$ the category of stable presheaves.
\end{df}

Now, the functor $\alpha^\prime:\ZZ \mathrm{F}_*(k)\to \smc$ induces a functor $\psht\to \pshfr$. By Example \ref{ex:stable}, this functor induces a functor
\[
(\alpha^\prime)^*:\psht\to \mathcal S\pshfr.
\]
Recall next that a presheaf $F$ on $\sm$ is $\AA^1$-invariant if the map $F(X)\to F(X\times \AA^1)$ induced by the projection $X\times \AA^1\to X$ is an isomorphism for any smooth scheme $X$. A Nisnevich sheaf of abelian groups $F$ is strictly $\AA^1$-invariant if the homomorphisms $\H^i_{\nis}(X,F)\to \H^i_{\nis}(X\times \AA^1,F)$ induced by the projection are isomorphisms for $i\geq 0$.

We can now state the main theorem of \cite{Garkusha15}. 

\begin{thm}\label{thm:A1local_framedPSh}
For any $\AA^1$-invariant quasi-stable $\ZZ \mathrm{F}_*(k)$-presheaf of abelian groups $F$, the associated Nisnevich sheaf $F_{\nis}$ is $\AA^1$-invariant if the base field is infinite. Moreover, if the base field $k$ is infinite perfect, every $\AA^1$-invariant quasi-stable Nisnevich sheaf (of abelian groups) is strictly $\AA^1$-invariant and quasi-stable.
\end{thm}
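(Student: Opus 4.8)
This statement is, as indicated, quoted from \cite{Garkusha15}: the first assertion is its main theorem over an arbitrary infinite base field, and the second is the refinement valid when $k$ is moreover perfect. So the plan is simply to invoke \emph{loc.\ cit.} Let me nonetheless indicate the shape of the argument, which is a faithful transcription of Voevodsky's proof of the corresponding statement for presheaves with transfers.

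The first observation is that $\ZZ\mathrm{F}_*(k)$ carries just enough transfer-like structure to run that machinery: one normalises $F$ using quasi-stability so that $\sigma_X$ acts as the identity, and one checks that $F$ then extends to a presheaf on essentially smooth (in particular local henselian) $k$-schemes and on framed correspondences between them. The genuine work consists of a package of \emph{moving lemmas} for framed correspondences over a smooth local scheme $(U,u)$ --- the framed substitute for Voevodsky's standard triples --- which let one deform a framed correspondence into good position with respect to a prescribed closed subset. It is exactly here that the hypotheses enter: one needs $k$ infinite to produce the requisite general sections Nisnevich-locally, and $k$ perfect to control the generic fibre and thereby the Cousin complex.

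Granting these geometric inputs, the rest is classical. One first proves an injectivity (and excision) theorem: for $U$ smooth local with fraction field $E$, the map $F_{\nis}(U)\to F_{\nis}(E)$ is injective. From this one constructs a Cousin-type resolution of $F_{\nis}$ by sheaves supported on points of fixed codimension, reducing the comparison $\H^i_{\nis}(X,F_{\nis}) \xrightarrow{\ \sim\ } \H^i_{\nis}(X\times\AA^1,F_{\nis})$ to homotopy invariance of this explicit complex, which in turn follows from $\AA^1$-invariance of $F$ on semilocal schemes; quasi-stability of the associated sheaf is inherited because $\sigma_X$ is itself a framed correspondence acting compatibly on every term. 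The main obstacle is thus entirely concentrated in the framed moving lemmas --- rephrasing Voevodsky's pushforward-and-transfer arguments purely in terms of \'etale neighbourhoods, framings and the stabilisation $\sigma_X$ --- and since this is carried out in full in \cite{Garkusha15}, we content ourselves with the reference.
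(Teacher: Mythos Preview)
Your proposal is correct and matches the paper's treatment: the paper presents this theorem purely as a quotation of the main result of \cite{Garkusha15} and gives no proof of its own, so your plan to simply invoke that reference is exactly what the paper does. Your additional sketch of the Garkusha--Panin argument (moving lemmas for framed correspondences replacing Voevodsky's standard triples, injectivity/excision, Cousin resolution) is accurate and goes beyond what the paper itself provides, which is just the bare citation.
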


\section{$\mathrm{MW}$-motivic complexes}\label{sec:MWmotives}

\subsection{Derived category}

For any abelian category $\mathcal A$, we denote by $\Comp(\mathcal A)$ the category of (possibly unbounded) complexes of objects of $\mathcal A$ and by $\K(\mathcal A)$ the category of complexes with morphisms up to homotopy. Finally, we denote by $\Der(\mathcal A)$ the derived category of $\Comp(\mathcal A)$. We refer to \cite[\S 10]{Weibel94} for all these notions.

\begin{num}
Recall from our notations that $t$ is now either the Nisnevich
 or the \'etale topology.

 As usual in motivic homotopy theory, our first task is to equip the
 category of complexes of $\mathrm{MW}$-$t$-sheaves with a good model structure.
 This is done using the method of \cite{CD1}, thanks to Lemma \ref{lm:corr_main}
 and the fact that $\sh$ is a Grothendieck abelian category
 (Proposition \ref{prop:exist_associated-W-t-sheaf}(2)).

Except for one subtlety in the case of the étale topology,
 our construction is analogous to that of sheaves with transfers.
 In particular, the proof of the main point is essentially
 an adaptation of \cite[5.1.26]{CD3}. In order to make a short and streamlined proof, we first recall a few
 facts from model category theory.
\end{num}

\begin{num}\label{num:model}
We will be using the adjunction of Grothendieck abelian categories:
$$
\tilde \gamma^*:\sh \leftrightarrows \sht:\tilde \gamma_*
$$
of Corollary \ref{cor:adjunctions_corr}. Recall from Lemma \ref{lm:compare_transfers} that the functor
 $\tilde \gamma_*$ is conservative and exact.

First, there exists the so-called injective model structure
 on $\Comp(\sh)$ and $\Comp(\sht)$ which is defined such that
 the cofibrations are monomorphisms (thus every object is cofibrant)
 and weak equivalences are quasi-isomorphisms (this is classical;
 see e.g. \cite[2.1]{CD1}). The fibrant objects for this model structure
 are called \emph{injectively fibrant}.

Second, there exists the $t$-descent model structure on the category
 $\Comp(\sh)$ (see \cite[Ex. 2.3]{CD1}) characterized by the following properties:
\begin{itemize}
\item the class of \emph{cofibrations} is given by the smallest class
 of morphisms of complexes closed under suspensions, pushouts,
 transfinite compositions and retracts generated by the inclusions
\begin{equation} \label{eq:model1}
R_t(X) \rightarrow C\big(R_t(X) \xrightarrow{Id} R_t(X)\big)[-1]
\end{equation}
for a smooth scheme $X$, where $R_t(X)$ is the free sheaf of $R$-modules on $X$.
\item weak equivalences are quasi-isomorphisms.
\end{itemize}
Our aim is to obtain the same kind of model structure
 on the category $\Comp(\sht)$ of complexes of $\mathrm{MW}$-$t$-sheaves.
 Let us recall from \cite{CD1} that one can describe nicely the fibrant
 objects for the $t$-descent model structure.
 This relies on the following definition 
 for a complex $K$ of $t$-sheaves:
\begin{itemize}
\item the complex $K$ is \emph{local} if for any smooth scheme $X$
 and any integer $n \in \ZZ$, the canonical map:
\begin{equation} \label{eq:model2}
\H^n\big(K(X)\big)=\Hom_{\K(\sh)}(R_t(X),K[n])
 \rightarrow \Hom_{\Der(\sh)}(R_t(X),K[n])
\end{equation}
is an isomorphism;
\item the complex $K$ is \emph{$t$-flasque} if for any smooth
 scheme $X$ and any $t$-hypercover $p:\cX \rightarrow X$,
 the induced map:
\begin{equation} \label{eq:model3}
\H^n\big(K(X)\big)=\Hom_{\K(\sh)}(R_t(X),K[n])
 \xrightarrow{p^*} \Hom_{\K(\sh)}(R_t(\cX),K[n])=\H^n\big(K(\cX)\big)
\end{equation}
is an isomorphism.

Our reference for $t$-hypercovers is \cite{DHI}. Recall in particular that
 $\cX$
 is a simplicial scheme whose terms are arbitrary direct sums of smooth schemes.
 Then the notation $R_t(\cX)$ stands for the complex associated
 with the simplicial $t$-sheaves obtained by applying
 the obvious extension of the functor $R_t$ to the category of
 direct sums of smooth schemes.
 Similarly, $K(\cX)$ is the total complex (with respect to products)
 of the obvious double complex.
\end{itemize}
Then, let us state for further reference the following theorem (\cite[Theorem 2.5]{CD1}).
\end{num}
\begin{thm}\label{thm:CD}
Let $K$ be a complex of $t$-sheaves on the smooth site. Then the following
 three properties on $K$ are equivalent:
\begin{enumerate}
\item[(i)] $K$ is fibrant for the $t$-descent model structure,
\item[(ii)] $K$ is local,
\item[(iii)] $K$ is $t$-flasque.
\end{enumerate}
\end{thm}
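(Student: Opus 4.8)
This is \cite[Theorem 2.5]{CD1}, whose proof I would organise as follows. The plan is to exhibit the $t$-descent model structure as \emph{cofibrantly generated} in a way that makes its fibrant objects visibly the $t$-flasque complexes. Concretely: besides the generating cofibrations \eqref{eq:model1} and the (always innocuous) acyclic disk inclusions $0\to C\big(R_t(X)\xrightarrow{Id}R_t(X)\big)[n]$, I would take as generating trivial cofibrations a set manufactured from the augmentations $R_t(\cX)\to R_t(X)$ attached to the $t$-hypercovers $p:\cX\to X$, these playing the role of the ``hypercoverings'' of a descent structure in the sense of \cite[\S 2]{CD1}. Each such augmentation is a quasi-isomorphism of complexes of $t$-sheaves (a hypercover resolves $R_t(X)$ at the level of sheaves); factoring it as a $\mathcal G$-cellular cofibration followed by a trivial fibration and throwing the resulting cofibration into the generating set, one checks that a complex $K$ has the right lifting property against it exactly when the map $p^*$ of \eqref{eq:model3} is bijective for every $n$. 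By the small object argument this identifies ``$K$ fibrant for the $t$-descent structure'' with condition (iii), giving (i)$\Leftrightarrow$(iii).

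The implication (i)$\Rightarrow$(ii) is then formal and I would dispatch it first: the homotopy category underlying the $t$-descent model structure (as for the injective one) is $\Der(\sh)$, the objects $R_t(X)$ are $t$-descent-cofibrant, and for $K$ fibrant and $C$ cofibrant in any model structure on $\Comp(\sh)$ the comparison $\Hom_{\K(\sh)}(C,K)\to\Hom_{\Der(\sh)}(C,K)$ is bijective, since the usual cylinder and path objects of $\Comp(\sh)$ compute the model-categorical homotopy relation and that relation is therefore chain homotopy; taking $C=R_t(X)[-n]$ gives \eqref{eq:model2}. For the equivalence (ii)$\Leftrightarrow$(iii) I would argue directly with sheaf cohomology. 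First, condition (ii) propagates from smooth schemes to arbitrary direct sums of smooth schemes, because $\H^n$ commutes with products of complexes of $R$-modules while both sides of \eqref{eq:model2} convert coproducts of the source into products. Granting this, $t$-flasqueness of $K$ forces $\Hom_{\Der(\sh)}(R_t(X),K[n])=\ilim_\cX\,\H^n(K(\cX))=\H^n(K(X))$, where the colimit is taken over the (filtered, after passage to simplicial homotopy) category of $t$-hypercovers of $X$ --- the Verdier/\cite{DHI} description of hypercohomology --- and all transition maps are isomorphisms; this is (iii)$\Rightarrow$(ii). Conversely, starting from (ii) and given a hypercover $\cX\to X$, I would compare the two bicomplexes $p\mapsto K(\cX_p)$ and $p\mapsto\derR\Gamma_t(\cX_p,K)$: they are columnwise quasi-isomorphic (each $\cX_p$ is a sum of smooth schemes, so (ii) applies termwise), hence their total complexes (with respect to products) are quasi-isomorphic because both bicomplexes are concentrated in the half-plane $p\ge 0$; as the total complex of $p\mapsto\derR\Gamma_t(\cX_p,K)$ computes $\derR\Gamma_t(X,K)$ by cohomological descent along the hypercover, one obtains $\H^n(K(\cX))=\H^n(K(X))$, which is (iii).

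The hard part is the claim underlying the first paragraph: that the hypercover maps \emph{generate all} $t$-descent trivial cofibrations, equivalently that this cofibrantly generated structure really has quasi-isomorphisms (of $t$-sheaves) for weak equivalences and exactly the $t$-flasque complexes for fibrant objects. This is the substance of the descent-structure formalism of \cite[\S 2]{CD1} and uses crucially both that $\{R_t(X)\}_X$ generates $\sh$ and that the $t$-hypercovers form a legitimate set (after a cardinality bound). Interlocked with it is the delicacy of cohomological descent for \emph{unbounded} complexes: in the \'etale case $\shx{\et}$ can have infinite cohomological dimension, so one must work with \emph{hypercomplete} descent, for which $t$-hypercovers are precisely the correct test objects --- this is the ``subtlety in the \'etale case'' alluded to above --- whereas in the Nisnevich case the finite Nisnevich cohomological dimension of smooth $k$-schemes makes the argument routine.
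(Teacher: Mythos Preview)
The paper does not give a proof of this theorem at all: it is simply stated as \cite[Theorem~2.5]{CD1} and used as a black box. So there is nothing in the paper to compare your argument against beyond the citation itself. Your sketch is a reasonable outline of the strategy of \cite{CD1}: exhibit the $t$-descent model structure as cofibrantly generated from a descent structure $(\mathcal G,\mathcal H)$ with $\mathcal H$ built out of hypercover augmentations, so that fibrant objects are exactly the $\mathcal H$-flasque complexes; then identify local and $t$-flasque via the Verdier/\cite{DHI} hypercovering description of derived $\Hom$. Your bicomplex argument for (ii)$\Rightarrow$(iii) is fine once one grants cohomological descent along hypercovers.

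One misattribution worth flagging: the phrase ``subtlety in the case of the \'etale topology'' that you invoke does not, in the paper, refer to Theorem~\ref{thm:CD}. That sentence is about the construction of the model structure on complexes of \emph{MW}-$t$-sheaves (the subject of Theorem~\ref{thm:decent_Wsheaves} and its proof via Lemmas~\ref{lm:corr_main_strong} and~\ref{lm:decent_Wsheaves2}), where the paper is adapting \cite[5.1.26, 10.3.17]{CD3}. Theorem~\ref{thm:CD} itself is proved in \cite{CD1} uniformly for any descent structure on a Grothendieck abelian category, with no case distinction between Nisnevich and \'etale; since $t$-flasqueness is defined from the outset via \emph{hyper}covers, the infinite-cohomological-dimension issue you raise is already absorbed into the formulation rather than being an extra difficulty in the proof.
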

Under these equivalent conditions,
 we will say that $K$ is $t$-fibrant.\footnote{Moreover, fibrations
 for the $t$-descent model
 structure  are epimorphisms of complexes whose kernel is $t$-fibrant.}

\begin{num}\label{num:Wmodel}
Consider now the case of $\mathrm{MW}$-$t$-sheaves.
 We will define \emph{cofibrations} in $\Comp(\sht)$ as in the previous
 paragraph by replacing $R_t$ by $\tilde R_t$ in \eqref{eq:model1}, i.e. 
 the cofibrations are the morphisms in the smallest class
 of morphisms of complexes of $\mathrm{MW}$-$t$-sheaves closed under suspensions, pushouts,
 transfinite compositions and retracts generated by the inclusions
\begin{equation} 
\tilde R_t(X) \rightarrow C\big(\tilde R_t(X) \xrightarrow{Id} \tilde R_t(X)\big)[-1]
\end{equation}
for a smooth scheme $X$. In particular, note that bounded above complexes of $\mathrm{MW}$-$t$-sheaves whose components are direct sums of sheaves of the form $\tilde R_t(X)$ are cofibrant. This is easily seen by taking the push-out of (\ref{eq:model1}) along the morphism $\tilde R_t(X)\to 0$.

 Similarly, a complex $K$ in $\Comp(\sht)$ will be called
 \emph{local} (resp. \emph{$t$-flasque}) if it satisfies
 the definition in the preceding paragraph
 after replacing respectively $\sh$ and $R_t$
 by $\sht$ and $\tilde R_t$ in \eqref{eq:model2} (resp. \eqref{eq:model3}).

In order to show that cofibrations and quasi-isomorphisms define
 a model structure on $\Comp(\sht)$, we will have to prove
 the following result in analogy with the previous theorem.
\end{num}

\begin{thm}\label{thm:decent_Wsheaves}
Let $K$ be a complex of $\mathrm{MW}$-$t$-sheaves.
 Then the following conditions are equivalent:
\begin{enumerate}
\item[(i)] $K$ is local;
\item[(ii)] $K$ is $t$-flasque.
\end{enumerate}
\end{thm}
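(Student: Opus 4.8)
The plan is to deduce Theorem \ref{thm:decent_Wsheaves} from its analogue for $t$-sheaves without transfers, namely Theorem \ref{thm:CD}, by transporting the relevant homotopical information along the adjunction $\tilde\gamma^*:\sh\leftrightarrows\sht:\tilde\gamma_*$ of Corollary \ref{cor:adjunctions_corr}. The key point I would emphasize first is that $\tilde\gamma_*$ is \emph{exact} (Proposition \ref{prop:exist_associated-W-t-sheaf}(3), it has both adjoints) and \emph{conservative} (it is faithful, see Lemma \ref{lm:compare_transfers} and Corollary \ref{cor:adjunctions_corr}(2)), so it induces a conservative triangulated functor $\Der(\sht)\to\Der(\sh)$ which moreover commutes with arbitrary direct sums and (being computed termwise) with the formation of total complexes of simplicial objects. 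The second ingredient is the compatibility $\tilde\gamma^*(R_t(X))=\tilde R_t(X)$, and more generally $\tilde\gamma^*\circ a = \tilde a\circ\tilde\gamma^*$ on presheaves, so that $\tilde\gamma^*$ sends the generating set $\{R_t(X)\}$ to the generating set $\{\tilde R_t(X)\}$ and sends $R_t(\cX)$ to $\tilde R_t(\cX)$ for any $t$-hypercover $\cX$.

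Granting these, I would argue as follows. For the implication (ii)$\Rightarrow$(i): if $K\in\Comp(\sht)$ is $t$-flasque, I want to show $K$ is local, i.e. that $\Hom_{\K(\sht)}(\tilde R_t(X),K[n])\to\Hom_{\Der(\sht)}(\tilde R_t(X),K[n])$ is a bijection for all smooth $X$ and all $n$. The standard mechanism (exactly as in \cite[5.1.26]{CD3} or the proof of Theorem \ref{thm:CD} in \cite{CD1}) is that a complex is local as soon as it is $t$-flasque, because one can compute maps in $\Der(\sht)$ out of $\tilde R_t(X)$ by replacing $\tilde R_t(X)$ with a cofibrant resolution built from a $t$-hypercover: the augmented complex $\tilde R_t(\cX)\to\tilde R_t(X)$ associated with a $t$-hypercover $\cX\to X$ becomes a quasi-isomorphism after applying $\tilde a$ — this is precisely the content of Lemma \ref{lm:corr_main} (the \v Cech case) together with a standard reduction of general hypercovers to iterated \v Cech covers. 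Since $\tilde R_t(\cX)$ is a complex of direct sums of representables, it is cofibrant (as observed in Paragraph \ref{num:Wmodel}), and the $t$-flasque condition says exactly that $K$ sees the map $\tilde R_t(\cX)\to\tilde R_t(X)$ as a quasi-isomorphism on $\Hom_{\K}$-groups; comparing the two one gets that $\K$- and $\Der$-morphisms agree. For the converse (i)$\Rightarrow$(ii): if $K$ is local, then for a $t$-hypercover $p:\cX\to X$ the map $\tilde R_t(\cX)\to\tilde R_t(X)$ is a quasi-isomorphism after $\tilde a$, hence an isomorphism in $\Der(\sht)$, and therefore induces a bijection on $\Hom_{\Der(\sht)}(-,K[n])$; by locality this bijection is identified with the map on $\H^n(K(-))$ appearing in \eqref{eq:model3} (with $R_t$ replaced by $\tilde R_t$), which is thus an isomorphism — i.e. $K$ is $t$-flasque.

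The main obstacle I anticipate is the passage from \v Cech covers to general $t$-hypercovers: Lemma \ref{lm:corr_main} only handles the \v Cech nerve of a single cover, whereas $t$-flasqueness is phrased in terms of arbitrary hypercovers with coefficients in direct sums of smooth schemes. Overcoming this requires the Dugger--Hollander--Isaksen machinery (\cite{DHI}): one shows that every $t$-hypercover can be refined by, and its associated chain complex is a filtered homotopy colimit of, bounded \v Cech-type pieces, and then uses exactness of $\tilde a$ together with the fact that $\sht$ is Grothendieck abelian (so filtered colimits are exact) to conclude that $\tilde R_t(\cX)\to\tilde R_t(X)$ is still a quasi-isomorphism after sheafification. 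In the \'etale case there is the additional "subtlety" alluded to in Paragraph just before \ref{num:model} — one may need to impose a bound on the cohomological dimension, or restrict to hypercovers in a suitable bounded range, exactly as in \cite{CD1} — and I would handle it by citing the corresponding step in the proof of Theorem \ref{thm:CD} verbatim, since the transfer structure plays no role there. Everything else is a formal diagram chase through the adjunction $(\tilde\gamma^*,\tilde\gamma_*)$ using conservativity and exactness of $\tilde\gamma_*$.
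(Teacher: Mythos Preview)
Your high-level plan---reduce to Theorem \ref{thm:CD} by transporting the local/flasque conditions across the adjunction $(\tilde\gamma^*,\tilde\gamma_*)$, after first upgrading Lemma \ref{lm:corr_main} from \v Cech covers to arbitrary $t$-hypercovers via the Dugger--Hollander--Isaksen machinery---is exactly the paper's strategy, and your identification of the hypercover-versus-\v Cech passage as the main technical hurdle is correct (the paper isolates it as Lemma \ref{lm:corr_main_strong}).

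However, your two middle paragraphs drift away from that plan into a direct argument internal to $\Comp(\sht)$ that does not actually use the adjunction, and that argument has gaps. In your (i)$\Rightarrow$(ii) step you write ``by locality this bijection is identified with the map on $\H^n(K(-))$'', but locality as defined only controls the comparison map $\Hom_{\tilde\K}\to\Hom_{\tilde\Der}$ for a \emph{single} smooth scheme $X$, not for the unbounded complex $\tilde R_t(\cX)$; extending it to the latter is exactly the sort of limit issue that is not automatic. In your (ii)$\Rightarrow$(i) step you speak of ``replacing $\tilde R_t(X)$ with a cofibrant resolution built from a $t$-hypercover'', but $\tilde R_t(X)$ is already cofibrant, and swapping one cofibrant source for a quasi-isomorphic one does nothing to compute $\Hom_{\tilde\Der}$ unless you also control the target---which is precisely what is at stake.

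The paper executes your stated plan cleanly by routing through the \emph{injective} model structure, and this is the ingredient missing from your sketch. The key device (Lemma \ref{lm:decent_Wsheaves2}) is: take an injectively fibrant replacement $K\to K'$; then $\pi_{\cX,K'}:\Hom_{\tilde\K}(\tilde R_t(\cX),K')\to\Hom_{\tilde\Der}(\tilde R_t(\cX),K')$ is automatically an isomorphism, so Lemma \ref{lm:corr_main_strong} shows $K'$ is $t$-flasque; hence $\tilde\gamma_*(K')$ is $t$-flasque, hence $t$-fibrant in $\Comp(\sh)$ by Theorem \ref{thm:CD}, hence the analogous map $\pi'_{\cX,K'}$ on the $\sh$ side is also an isomorphism. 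This produces a natural isomorphism $\Hom_{\tilde\Der}(\tilde R_t(\cX),K)\cong\Hom_{\Der}(R_t(\cX),\tilde\gamma_*(K))$ for \emph{all} $K$ and all $\cX$ (in particular for $\cX=X$), from which one reads off immediately that $K$ is local (resp.\ $t$-flasque) in $\sht$ if and only if $\tilde\gamma_*(K)$ is so in $\sh$; then Theorem \ref{thm:CD} finishes. Your final sentence (``a formal diagram chase through the adjunction'') points in this direction, but the concrete mechanism---the injective fibrant replacement feeding back through Theorem \ref{thm:CD}---is what makes it go.
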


The proof is essentially an adaptation of the proof of
 \cite[5.1.26, 10.3.17]{CD3},
 except that the case of the \'etale topology needs a new argument. It will be completed as a corollary of two lemmas, the first of which is a reinforcement of Lemma \ref{lm:corr_main}.

\begin{lm}\label{lm:corr_main_strong}
Let $p:\cX \rightarrow X$ be a $t$-hypercover of a smooth scheme
 $X$. Then the induced map:
$$
p_*:\tilde R_t(\cX) \rightarrow \tilde R_t(X)
$$
is a quasi-isomorphism of complexes of $\mathrm{MW}$-$t$-sheaves.
\end{lm}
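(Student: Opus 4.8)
The plan is to reduce the statement for an arbitrary $t$-hypercover to the case of a \v{C}ech hypercover, where Lemma~\ref{lm:corr_main} applies directly. First I would recall the standard fact (see \cite{DHI}) that any $t$-hypercover $p:\cX\to X$ admits a refinement which, after passing to a suitable cofinal diagram, is built up levelwise from \v{C}ech-type covers: more precisely, the augmented simplicial sheaf $\tilde R_t(\cX)\to \tilde R_t(X)$ can be analyzed via the skeletal filtration, and the associated graded pieces are controlled by the matching-object maps $\cX_n\to (\mathrm{cosk}_{n-1}\cX)_n$, each of which is a $t$-cover of a (disjoint sum of) smooth scheme(s). The key input is that for an honest $t$-cover $U\to Y$, Lemma~\ref{lm:corr_main} tells us the \v{C}ech complex $\tilde R_t(U^\bullet_Y)\to \tilde R_t(Y)$ is exact as a complex of $t$-sheaves, i.e. a quasi-isomorphism.

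Concretely, I would argue as follows. Filter the simplicial object $\cX$ by its skeleta $\mathrm{sk}_n\cX$; this induces a filtration on the normalized (Moore) complex $\tilde R_t(\cX)$ whose $n$-th graded piece is, up to shift, $\tilde R_t$ applied to the (reduced) $n$-th latching-quotient, which is supported on the $t$-cover $\cX_n \to M_n$ where $M_n$ is the $n$-th matching object. By the hypercover condition each such map is a $t$-cover, and a small homological bookkeeping (the same one used in \cite[5.1.26]{CD3}) reduces the claim to: for every $t$-cover $U\to Y$ with $Y$ a sum of smooth schemes, $\tilde R_t(U)\to \tilde R_t(Y)$ together with its higher \v{C}ech terms forms an exact complex of $t$-sheaves. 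That is exactly Lemma~\ref{lm:corr_main} (extended to disjoint sums by additivity, which is legitimate since $\tilde R_t$ and $\tilde a$ commute with direct sums by Proposition~\ref{prop:exist_associated-W-t-sheaf}). Hence each graded piece of the filtration is acyclic, and therefore $p_*$ is a quasi-isomorphism.

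The main obstacle, and the reason the \'etale case "needs a new argument" as flagged before the statement, is the boundedness/convergence issue: for the \'etale topology a hypercover need not be bounded (it has cohomological dimension issues and the simplicial degree is unbounded), so the spectral sequence of the skeletal filtration does not obviously converge, and one cannot naively conclude quasi-isomorphism from acyclicity of the graded pieces in each fixed degree. To handle this I would instead phrase the argument in terms of \emph{$t$-flasqueness} rather than working directly with $\tilde R_t(\cX)$: one shows that for a $t$-fibrant (equivalently $t$-flasque) replacement the relevant mapping complexes agree, using that $\tilde\gamma_*$ is exact and commutes with all limits and colimits (Proposition~\ref{prop:exist_associated-W-t-sheaf}(3)), so that one may compare against the already-known statement for $R_t$ on the ordinary smooth site (Theorem~\ref{thm:CD}) together with the fact, from Corollary~\ref{cor:prop:corr_main}(1), that $\tilde\gamma^!$ of a $t$-sheaf is a $\mathrm{MW}$-$t$-sheaf. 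In other words, rather than proving the graded-piece vanishing converges, I would deduce the hypercover statement from the cover statement (Lemma~\ref{lm:corr_main}) by a descent/Bousfield-localization argument internal to $\Comp(\sht)$, exactly as the cited results in \cite{CD3} do, with the new ingredient being the compatibility functors of Corollary~\ref{cor:adjunctions_corr}. I expect the bulk of the work to be checking that this localization argument goes through unchanged for $\mathrm{MW}$-transfers, the only genuinely new point being the unbounded \'etale case, which is dispatched by the conservativity and exactness of $\tilde\gamma_*$.
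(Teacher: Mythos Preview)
Your skeletal-filtration strategy is in the right spirit, but the paper's argument is organized quite differently and sidesteps the convergence worry you raise.

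The paper's first move is one you missed: since the claim is that $p_*$ induces an isomorphism on each homology sheaf, and the $n$-th homology of $\tilde R_t(\cX)$ depends only on the $n$-skeleton of $\cX$, one may replace $\cX$ by $\mathrm{cosk}_n(\cX)$ for any sufficiently large $n$. So without loss of generality $\cX$ is a \emph{bounded} hypercover in the sense of \cite[Def.~4.10]{DHI}. This single observation dissolves the unbounded/convergence issue entirely, and it works uniformly for the Nisnevich and the \'etale topology; the remark that ``the \'etale case needs a new argument'' refers to the proof of Theorem~\ref{thm:decent_Wsheaves} as a whole (specifically the comparison in Lemma~\ref{lm:decent_Wsheaves2}), not to the present lemma.

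Having reduced to bounded hypercovers, the paper does not run a skeletal filtration. Instead it argues by duality: it suffices to show that for every complex $K$ of MW-$t$-sheaves the map
\[
p^*:\derR\Hom^\bullet(\tilde R_t(X),K)\longrightarrow \derR\Hom^\bullet(\tilde R_t(\cX),K)
\]
is a quasi-isomorphism. Taking $K$ injectively fibrant, this becomes the statement that the presheaf of complexes $X\mapsto \Hom^\bullet(\tilde R_t(X),K)$ satisfies descent for bounded $t$-hypercovers. Lemma~\ref{lm:corr_main} gives exactly this for the \v{C}ech hypercover attached to a single $t$-cover, and then \cite[A.6]{DHI} bootstraps \v{C}ech descent to descent for all bounded hypercovers. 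That is the whole proof.

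Compared with your outline: your first two paragraphs (skeletal filtration plus Lemma~\ref{lm:corr_main} on the graded pieces) could in principle be made to work, but the paper's route via $\mathrm{cosk}_n$ and \cite[A.6]{DHI} is both shorter and avoids any delicate bookkeeping about latching/matching objects. Your third paragraph --- invoking $\tilde\gamma_*$, $\tilde\gamma^!$, and Theorem~\ref{thm:CD} to handle convergence --- is unnecessary here; those ingredients appear in the paper, but only later, in the proof of Lemma~\ref{lm:decent_Wsheaves2} and Theorem~\ref{thm:decent_Wsheaves}, \emph{after} the present lemma has been established.
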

\begin{proof}
In fact, we have to prove that the complex $\tilde R_t(\cX)$
 is acyclic in positive degree and that $p_*$
 induces an isomorphism $\H_0(\tilde R_t(\cX))=\tilde R_t(X)$.\footnote{Note that
 the second fact follows from Lemma \ref{lm:corr_main}
 and the definition of $t$-hypercovers, but our proof works more directly.}
 In particular, as these assertions only concerns the $n$-th homology
 sheaf of $\tilde R_t(\cX)$, we can always assume that
 $\cX \simeq \mathrm{cosk}_n(\cX)$ for a
 large enough integer $n$ (because these two simplicial objects
 have the same $(n-1)$-skeleton). In other words, we can assume
 that $\cX$ is a bounded $t$-hypercover in the terminology
 of \cite[Def. 4.10]{DHI}.

As a consequence of the existence of the injective model
 structure, the category $\Der(\sht)$ is naturally enriched over the derived
 category of $R$-modules. Let us denote by $\derR \Hom^\bullet$ the
 corresponding Hom-object.
 We have only to prove that for any complex $K$
 of $\mathrm{MW}$-$t$-sheaves, the natural map:
$$
p^*:\derR \Hom^\bullet(\tilde R_t(X),K)
 \rightarrow \derR \Hom^\bullet(\tilde R_t(\cX),K)
$$
is an isomorphism in the derived category of $R$-modules.
 Because there exists an injectively fibrant resolution of any complex $K$,
 and $\derR \Hom^\bullet$ preserves quasi-isomorphisms,
 it is enough to consider the case of an injectively fibrant
 complex $K$ of $\mathrm{MW}$-$t$-sheaves.

In this case, $\derR \Hom^\bullet(-,K)=\Hom^\bullet(-,K)$
 (as any complex is cofibrant for the injective model structure)
 and we are reduced to prove that the following complex of presheaves on
 the smooth site:
$$
X \mapsto \Hom^\bullet(\tilde R_t(X),K)
$$
satisfies $t$-descent with respect to bounded $t$-hypercovers
 \emph{i.e.} sends bounded $t$-hypercovers $\cX/X$ to quasi-isomorphisms of complexes
 of $R$-modules. But Lemma \ref{lm:corr_main} (and the fact that $K$ is injectively fibrant)
 tells us that this is the case when $\cX$ is the $t$-hypercover associated
 with a $t$-cover. So we conclude using \cite[A.6]{DHI}.
\end{proof}

The second lemma for the proof of Theorem \ref{thm:decent_Wsheaves}
 is based on the previous one.
 
\begin{lm}\label{lm:decent_Wsheaves2}
Let us denote by $\Comp$, $\K$, $\Der$
 (respectively by  $\tilde \Comp$, $\tilde \K$, $\tilde \Der$)
 the category of complexes, complexes up to homotopy and derived
 category of the category $\sh$ (respectively $\sht$).

 Given a simplicial scheme $\cX$ whose components are (possibly infinite) coproducts of smooth $k$-schemes
 and a complex $K$ of $\mathrm{MW}$-$t$-sheaves, 
 we consider the isomorphism of $R$-modules obtained
 from the adjunction $(\tilde \gamma^*,\tilde \gamma_*)$:
$$
\epsilon_{\cX,K}:\Hom_{\tilde \Comp}(\tilde R_t(\cX),K)
\rightarrow \Hom_{\Comp}(R_t(\cX),\tilde \gamma_*(K)).
$$
Then there exist unique isomorphisms $\epsilon'_{\cX,K}$
 and $\epsilon''_{\cX,K}$ of $R$-modules making the following diagram
 commutative:
$$
\xymatrix@R=18pt@C=28pt{
\Hom_{\tilde \Comp}(\tilde R_t(\cX),K)\ar^-{\epsilon_{\cX,K}}[r]\ar[d]
 & \Hom_{\Comp}(R_t(\cX),\tilde \gamma_*(K))\ar[d] \\
\Hom_{\tilde \K}(\tilde R_t(\cX),K)\ar^-{\epsilon'_{\cX,K}}[r]\ar_{\pi_{\mathcal X,K}}[d]
 & \Hom_{\K}(R_t(\cX),\tilde \gamma_*(K))\ar^{\pi'_{\mathcal X,K}}[d] \\
\Hom_{\tilde \Der}(\tilde R_t(\cX),K)\ar^-{\epsilon''_{\cX,K}}[r]
 & \Hom_{\Der}(R_t(\cX),\tilde \gamma_*(K))
}
$$
where the vertical morphisms are the natural localization maps.
\end{lm}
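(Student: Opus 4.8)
The plan is to reduce the existence of $\epsilon'_{\cX,K}$ and $\epsilon''_{\cX,K}$ to naturality of the adjunction isomorphism $\epsilon_{\cX,K}$ together with the fact that the homotopy relation and the class of quasi-isomorphisms are each preserved, in a compatible way, by the functors $\tilde\gamma_*$ and $\tilde\gamma^*$. First I would recall that $\epsilon_{\cX,K}$ itself is simply the adjunction bijection
\[
\Hom_{\tilde\Comp}(\tilde R_t(\cX),K)\xrightarrow{\ \sim\ }\Hom_{\Comp}(R_t(\cX),\tilde\gamma_*(K)),
\]
obtained by applying $\tilde\gamma_*$ and precomposing with the adjunction unit $R_t(\cX)\to\tilde\gamma_*\tilde\gamma^*(R_t(\cX))=\tilde\gamma_*(\tilde R_t(\cX))$; here I use that $\tilde\gamma^*(R_t(X))=\tilde R_t(X)$ (Definition \ref{df:gen_sht} and the formula recalled before it) and that $\tilde\gamma^*$ is extended to coproducts of smooth schemes termwise, so $\tilde\gamma^*(R_t(\cX))=\tilde R_t(\cX)$ as complexes. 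Since $\tilde\gamma_*$ is exact and additive, it is in particular a functor of additive categories, hence sends chain homotopies to chain homotopies; therefore it descends to a map on $\K$, and the upper square commutes by naturality of the localization $\Comp\to\K$ applied to $\epsilon_{\cX,K}$. This forces the existence and uniqueness of $\epsilon'_{\cX,K}$ provided one checks it is still a bijection — and this is where the real content lies.

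To see that $\epsilon'_{\cX,K}$ is bijective I would argue that a morphism $R_t(\cX)\to\tilde\gamma_*(K)$ is null-homotopic if and only if the corresponding morphism $\tilde R_t(\cX)\to K$ is null-homotopic. One direction is the remark above ($\tilde\gamma_*$ preserves homotopies). For the converse, I would use that $\tilde\gamma_*$ is \emph{conservative} and \emph{exact} (Lemma \ref{lm:compare_transfers}, Paragraph \ref{num:rightadjoint}): a morphism $f\colon \tilde R_t(\cX)\to K$ is null-homotopic iff it factors through the cone of the identity of $\tilde R_t(\cX)$ (equivalently, iff $\mathrm{Cone}(f)$ is split, or iff $f$ is zero in $\tilde\K$), and such statements can be detected after applying $\tilde\gamma_*$ because $\tilde\gamma_*$ commutes with cones (being exact and additive) and because the cofibrancy of the bounded-above complexes $\tilde R_t(\cX)$ and $R_t(\cX)$ (noted in Paragraph \ref{num:Wmodel} and its analogue for sheaves) means $\Hom_{\tilde\K}(\tilde R_t(\cX),-)$ and $\Hom_{\K}(R_t(\cX),-)$ compute the derived Hom. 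Concretely: $\Hom_{\tilde\K}(\tilde R_t(\cX),K)=\H^0\,\Hom^\bullet(\tilde R_t(\cX),K)$ and under $\epsilon$ the complex $\Hom^\bullet(\tilde R_t(\cX),K)$ is identified with $\Hom^\bullet(R_t(\cX),\tilde\gamma_*K)$ termwise, hence their $\H^0$'s agree; this gives $\epsilon'_{\cX,K}$ as the induced iso on $\H^0$. The lower square and the map $\epsilon''_{\cX,K}$ are then obtained the same way on the level of the derived category: one chooses an injectively fibrant resolution $K\to K'$ (available since $\Comp(\sht)$ carries the injective model structure, Paragraph \ref{num:model}), notes that $\tilde\gamma_*K\to\tilde\gamma_*K'$ is still injectively fibrant or at least fibrant enough to compute $\derR\Hom$ because $\tilde\gamma_*$ is exact and right adjoint, and replaces $\K$ by $\tilde\K$ throughout, so that $\Hom_{\tilde\Der}(\tilde R_t(\cX),K)=\Hom_{\tilde\K}(\tilde R_t(\cX),K')$ and similarly downstairs; $\epsilon''_{\cX,K}:=\epsilon'_{\cX,K'}$ then fits into the lower square by naturality of $\epsilon'$ in $K$ applied to $K\to K'$.

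The main obstacle is the compatibility of $\epsilon$ with the \emph{derived} Hom, i.e. ensuring that $\tilde\gamma_*$ of an injectively fibrant complex of $\mathrm{MW}$-$t$-sheaves is adequate for computing $\Hom_{\Der(\sh)}(R_t(\cX),-)$; equivalently, that the two vertical localization columns are compatible under $\epsilon$ all the way down. I expect to handle this exactly as in the proof of Lemma \ref{lm:corr_main_strong}: replace $K$ by an injectively fibrant resolution, use that every complex is cofibrant for the injective structure so that $\derR\Hom^\bullet(-,K)=\Hom^\bullet(-,K)$, and then invoke that $\tilde\gamma_*$ is exact (hence preserves quasi-isomorphisms and commutes with the formation of $\Hom^\bullet$ against the cofibrant complexes $\tilde R_t(\cX)$, $R_t(\cX)$ which are degreewise sums of representables). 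Uniqueness of $\epsilon'_{\cX,K}$ and $\epsilon''_{\cX,K}$ is then immediate, since the vertical localization maps $\Hom_{\tilde\Comp}\to\Hom_{\tilde\K}\to\Hom_{\tilde\Der}$ are (degreewise) surjective, so a map out of $\Hom_{\tilde\K}(\tilde R_t(\cX),K)$ or $\Hom_{\tilde\Der}(\tilde R_t(\cX),K)$ making the relevant square commute is determined by its precomposition with the localization.
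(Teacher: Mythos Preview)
Your treatment of $\epsilon'_{\cX,K}$ is fine and essentially matches the paper: both sides are $\H^0$ of the same Hom-complex, so the adjunction iso descends. But there is a real gap in your construction of $\epsilon''_{\cX,K}$.

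The crux is the sentence ``$\tilde\gamma_*K\to\tilde\gamma_*K'$ is still injectively fibrant or at least fibrant enough to compute $\derR\Hom$ because $\tilde\gamma_*$ is exact and right adjoint.'' Neither exactness nor being a right adjoint gives you this. For $(\tilde\gamma^*,\tilde\gamma_*)$ to be Quillen for the \emph{injective} model structures you would need $\tilde\gamma^*$ to preserve monomorphisms, which is not established (and not obvious). So you cannot conclude that $\tilde\gamma_*K'$ is injectively fibrant, nor, from your argument, that $\pi'_{\cX,K'}$ is an isomorphism. The paper closes this gap by invoking Lemma~\ref{lm:corr_main_strong} as an \emph{input}, not merely as a template: since $K'$ is injectively fibrant, $\pi_{\cX,K'}$ is an iso for every $\cX$, and Lemma~\ref{lm:corr_main_strong} then says $K'$ is $t$-flasque (in the MW sense of Paragraph~\ref{num:Wmodel}); transporting through the already-established isomorphism $\epsilon'_{\cX,K'}$ shows $\tilde\gamma_*(K')$ is $t$-flasque in $\Comp(\sh)$, whence $t$-fibrant by Theorem~\ref{thm:CD}. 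Only then does cofibrancy of $R_t(\cX)$ for the $t$-descent model structure give that $\pi'_{\cX,K'}$ is an isomorphism. Without Lemma~\ref{lm:corr_main_strong} you have no mechanism to turn injective-fibrancy of $K'$ into any fibrancy statement for $\tilde\gamma_*(K')$.

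There is also a smaller slip in your uniqueness argument: the map $\Hom_{\tilde\K}(\tilde R_t(\cX),K)\to\Hom_{\tilde\Der}(\tilde R_t(\cX),K)$ is \emph{not} surjective for arbitrary $K$ (morphisms in the derived category are roofs), so you cannot deduce uniqueness of $\epsilon''_{\cX,K}$ from surjectivity of $\pi_{\cX,K}$. The paper avoids this by first treating the case where $K$ is injectively fibrant, in which $\pi_{\cX,K}$ is an \emph{isomorphism} and uniqueness is immediate; the general case then follows by naturality in $K$ along an injectively fibrant resolution.
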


\begin{proof}
The existence and unicity of $\epsilon'_{\cX,K}$ simply follows
 from the fact $\tilde \gamma^*$ and $\tilde \gamma_*$ are additive functors,
 so in particular $\epsilon_{\cX,K}$ is compatible with chain homotopy
 equivalences.
 
For the case of $\epsilon_{\cX,K}^{''}$, we assume that 
 the complex $K$ is injectively fibrant. In this case,
 the map $\pi_{\mathcal X,K}$ is an isomorphism.
 This already implies the existence and unicity of the map
 $\epsilon''_{\cX,K}$. Besides, according to the previous lemma and the fact that the map
 $\pi_{\mathcal X,K}$ is an isomorphism natural in $\mathcal X$,
 we obtain that $K$ is $t$-flasque
  (in the sense of Paragraph \ref{num:Wmodel}).
 Because $\epsilon'_{\cX,K}$ is an isomorphism natural in $\mathcal X$,
 we deduce that $\tilde \gamma_*(K)$ is $t$-flasque.
 In view of Theorem \ref{thm:CD}, it is $t$-fibrant. 
 As $R_t(\cX)$ is cofibrant for the $t$-descent model
 structure on $\Comp$, we deduce that $\pi'_{\mathcal X,K}$ is an isomorphism.
 Therefore, $\epsilon''_{\cX,K}$ is an isomorphism.

The case of a general complex $K$ now follows from the existence of an injectively
 fibrant resolution $K \rightarrow K'$ of any complex of $\mathrm{MW}$-$t$-sheaves $K$.
\end{proof}

\begin{proof}[proof of Theorem \ref{thm:decent_Wsheaves}]
 The previous lemma shows that the following conditions on
 a complex $K$ of $\mathrm{MW}$-$t$-sheaves are equivalent:
\begin {itemize}
\item $K$ is local (resp. $t$-flasque) in $\Comp(\sht)$;
\item $\tilde \gamma_*(K)$ is local (resp. $t$-flasque) in $\Comp(\sh)$.
\end {itemize}
Then Theorem \ref{thm:decent_Wsheaves} follows from Theorem \ref{thm:CD}.
\end{proof}

Here is an important corollary (analogous to \cite[chap. 5, 3.1.8]{FSV}) which is simply a restatement of Lemma \ref{lm:decent_Wsheaves2}. 

\begin{cor}\label{cor:compare_Hom&cohomology}
Let $K$ be a complex of $\mathrm{MW}$-$t$-sheaves and $X$ be a smooth scheme.
 Then for any integer $n \in \ZZ$, there exists a canonical isomorphism,
 functorial in $X$ and $K$:
$$
\Hom_{\Der(\sht)}(\tilde R_t(X),K[n])=\mathbb{H}^n_t(X,K)
$$
where the right hand side stands for the $t$-hypercohomology
of $X$ with coefficients in the complex $\tilde \gamma_*(K)$
 (obtained after forgetting $\mathrm{MW}$-transfers).
\end{cor}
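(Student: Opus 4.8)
The plan is to read the statement off from Lemma \ref{lm:decent_Wsheaves2} together with the classical identification of derived morphisms out of a representable sheaf with hypercohomology. First I would take $\cX$ to be the constant simplicial object on $X$ (every term equal to $X$, all face and degeneracy maps the identity), so that in the notation of Lemma \ref{lm:decent_Wsheaves2} one has $\tilde R_t(\cX) = \tilde R_t(X)$ and $R_t(\cX) = R_t(X)$; this $\cX$ is of the allowed shape since its components are smooth schemes. Applying that lemma to the complex $K[n]$ in place of $K$, and using that the additive functor $\tilde\gamma_*$ commutes with the shift, i.e. $\tilde\gamma_*(K[n]) = \tilde\gamma_*(K)[n]$, I obtain a uniquely determined isomorphism of $R$-modules
\[
\epsilon''_{X,K[n]}\colon \Hom_{\Der(\sht)}\big(\tilde R_t(X),K[n]\big) \xrightarrow{\ \sim\ } \Hom_{\Der(\sh)}\big(R_t(X),\tilde\gamma_*(K)[n]\big),
\]
and this isomorphism is natural in $X$ and in $K$ because the isomorphisms produced by Lemma \ref{lm:decent_Wsheaves2} are.

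Next I would identify the target with $\mathbb{H}^n_t(X,\tilde\gamma_*(K))$. This is the classical computation in $\Comp(\sh)$: choosing a $t$-fibrant resolution $\tilde\gamma_*(K)\to L$ (which exists, and by Theorem \ref{thm:CD} such an $L$ is local and $t$-flasque), one has $\Hom_{\Der(\sh)}(R_t(X),L[n]) = \Hom_{\K(\sh)}(R_t(X),L[n]) = \H^n\big(L(X)\big)$, where the first equality uses that $R_t(X)$ is cofibrant and $L$ fibrant for the $t$-descent model structure; and $\H^n\big(L(X)\big)$ is by definition the $t$-hypercohomology $\mathbb{H}^n_t(X,\tilde\gamma_*(K))$. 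Composing this chain of functorial isomorphisms with $\epsilon''_{X,K[n]}$ gives the asserted canonical isomorphism $\Hom_{\Der(\sht)}(\tilde R_t(X),K[n]) = \mathbb{H}^n_t(X,K)$, functorial in $X$ and $K$.

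There is no genuine obstacle here: the real content --- the comparison of $\Der(\sht)$-morphisms with $\Der(\sh)$-morphisms after applying $\tilde\gamma_*$ --- was already established in Lemma \ref{lm:decent_Wsheaves2}, and the remaining ingredient is standard (see \cite[chap. 5, 3.1.8]{FSV}). The only points that need a line of verification are the bookkeeping ones: that the constant simplicial scheme on $X$ falls under the hypotheses of Lemma \ref{lm:decent_Wsheaves2}, that $\tilde\gamma_*$ commutes with the degree shift, and that naturality in $X$ and $K$ is inherited from the two functorial isomorphisms being composed.
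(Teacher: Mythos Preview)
Your argument is correct and is exactly the intended one: the paper itself states that the corollary ``is simply a restatement of Lemma \ref{lm:decent_Wsheaves2}'', and you have unpacked precisely that restatement---specializing $\cX$ to (the constant simplicial object on) $X$, then composing $\epsilon''_{X,K[n]}$ with the standard identification of $\Hom_{\Der(\sh)}(R_t(X),-)$ with $t$-hypercohomology via a $t$-fibrant resolution. The only cosmetic point is that, depending on whether one uses the normalized or unnormalized chain complex, the equality $\tilde R_t(\cX)=\tilde R_t(X)$ is either literal or only up to a canonical quasi-isomorphism; in the latter case nothing changes since $\epsilon''$ lives at the level of the derived category.
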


Recall that the category $\Comp(\sht)$ is symmetric monoidal,
 with tensor product induced as usual from the tensor product
 on $\sht$ (see Paragraph \ref{num:sht_monoidal}).

\begin{cor}\label{cor:model_Der}
The category $\Comp(\sht)$
 has a proper cellular model structure
  (see \cite[12.1.1 and 13.1.1]{Hirschhorn03})
 with quasi-isomorphisms as weak equivalences and
 cofibrations as defined in Paragraph \ref{num:Wmodel}.
Moreover, the fibrations for this model structure
 are epimorphisms of complexes whose kernel
 are $t$-flasque (or equivalently local) complexes of $\mathrm{MW}$-$t$-sheaves.
Finally, this is a symmetric monoidal model structure;
 in other words, the tensor products (resp. internal Hom functor)
 admits a total left (resp. right) derived functor.
\end{cor}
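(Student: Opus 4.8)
The plan is to derive everything from the general construction of \cite{CD1}, which attaches a model structure on the category of complexes of a Grothendieck abelian category to a suitable \emph{descent structure}; this is the very mechanism behind Theorem \ref{thm:CD}, and the one used for sheaves with transfers in \cite[5.1.26, 10.3.17]{CD3}. On the Grothendieck abelian category $\sht$ (Proposition \ref{prop:exist_associated-W-t-sheaf}(2)) I would take the pair $(\mathcal G,\mathcal H)$ where $\mathcal G=\{\tilde R_t(X)\mid X\in\sm\}$ and $\mathcal H$ is a set of representatives for the cones $C(p_*)$ of the maps $p_*\colon\tilde R_t(\cX)\to\tilde R_t(X)$, with $p\colon\cX\to X$ ranging over the bounded $t$-hypercovers of smooth schemes. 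The first task is to check that this is a descent structure: $\mathcal G$ generates $\sht$ by Proposition \ref{prop:exist_associated-W-t-sheaf}(2); each $C(p_*)$ is a bounded complex of coproducts of objects of $\mathcal G$, hence cofibrant for the class of Paragraph \ref{num:Wmodel}; and each $C(p_*)$ is acyclic by Lemma \ref{lm:corr_main_strong}. This last point is exactly where the \'etale topology could have required a separate treatment, but Lemma \ref{lm:corr_main_strong} handles it uniformly.

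Granting this, \cite[Theorem~2.5]{CD1} equips $\Comp(\sht)$ with a proper cellular model structure whose weak equivalences are the quasi-isomorphisms and whose cofibrations are the morphisms of Paragraph \ref{num:Wmodel}; cellularity uses that these cofibrations are monomorphisms --- hence automatically effective in an abelian category --- and that the objects of $\mathcal G$ are small, $\sht$ being locally presentable. By construction of this structure, a complex is fibrant if and only if it is local (in the sense of Paragraph \ref{num:Wmodel}) and right orthogonal in $\K(\sht)$ to every object of $\mathcal H$; since $\mathcal H$ consists of the hypercover cones, the latter condition says precisely that the complex is $t$-flasque. By Theorem \ref{thm:decent_Wsheaves} the two conditions coincide, so the fibrant objects are exactly the $t$-flasque (equivalently, local) complexes, and consequently the fibrations are the epimorphisms of complexes whose kernel is $t$-flasque.

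For the symmetric monoidal statement I would check that $(\mathcal G,\mathcal H)$ is weakly flat in the sense of \cite[\S 3]{CD1} and then quote the monoidality theorem of loc.\ cit. Two verifications are needed. First, $\mathcal G$ is stable under $\otr$ and its objects are flat for it: stability is \eqref{eq:sht_monoidal}, $\tilde R_t(X)\otr\tilde R_t(Y)=\tilde R_t(X\times Y)$, while the exactness of $\tilde R_t(X)\otr-$ is obtained, exactly as in \cite[\S 10.3]{CD3}, from the description of $\otr$ as $\tilde a$ applied to the presheaf tensor product (Paragraph \ref{num:sht_monoidal}). Second, tensoring an element of $\mathcal H$ with a generator stays in the class generated by $\mathcal H$: if $C(p_*)$ comes from a $t$-hypercover $p\colon\cX\to X$, then $C(p_*)\otr\tilde R_t(Y)\simeq C\big(\tilde R_t(\cX\times Y)\to\tilde R_t(X\times Y)\big)$ and $\cX\times Y\to X\times Y$ is again a $t$-hypercover. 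With these the pushout--product and monoid axioms follow, the model structure is symmetric monoidal, and $\otr$, $\uHom$ acquire a total left, resp.\ right, derived functor.

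The real substance has thus already been concentrated in Lemmas \ref{lm:corr_main}, \ref{lm:corr_main_strong} and Theorem \ref{thm:decent_Wsheaves}; what is left is mostly bookkeeping with the formalism of \cite{CD1}. The step I expect to require genuine care is the flatness input for the monoidal part --- checking that $\tilde R_t(X)\otr-$ is exact on $\sht$, or at least on the subcategory generated by the $\tilde R_t(Y)$ --- which, as in \cite[\S 10.3]{CD3}, reduces to the explicit presheaf-level formula for $\otr$ together with the exactness of $\tilde a$.
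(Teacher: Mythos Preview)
Your proposal is correct and follows essentially the same route as the paper: both apply the descent-structure machinery of \cite{CD1} (Theorems~2.5 and~3.2 there, plus~5.5 for properness/cellularity) to the pair $(\mathcal G,\mathcal H)$ with $\mathcal G=\{\tilde R_t(X)\}$ and $\mathcal H$ the hypercover cones, invoking Lemma~\ref{lm:corr_main_strong} and Theorem~\ref{thm:decent_Wsheaves} to verify the descent condition and formula~\eqref{eq:sht_monoidal} for weak flatness. You spell out a few more of the weak-flatness verifications than the paper does, but the substance and the inputs are identical.
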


\begin{proof}
Each claim is a consequence of \cite[2.5, 5.5 and 3.2]{CD1},
 applied to the Grothendieck abelian category $\sht$ with respect
 to the descent structure $(\mathcal G,\mathcal H)$ (see \cite[Def. 2.2]{CD1}
 for the notion of descent structure) defined as follows:
\begin{itemize}
\item $\mathcal G$ is the class of $\mathrm{MW}$-$t$-sheaves of the form $\tilde R_t(X)$
 for smooth scheme $X$;
\item $\mathcal H$ is the (small) family of complexes which are
 cones of morphisms $p_*:\tilde R_t(\cX) \rightarrow \tilde R_t(X)$
 for a $t$-hypercover $p$.
\end{itemize}
Indeed, $\mathcal G$ generates the category $\sht$
 (see after Definition \ref{df:gen_sht}) and the condition
 to be a descent structure is given by Theorem \ref{thm:decent_Wsheaves}.

In the end, we can apply \cite[3.2]{CD1} to derive the
 tensor product as the tensor structure is weakly flat (in the sense of \cite[\S 3.1]{CD1})
 due to the preceding definition and formula \eqref{eq:sht_monoidal}.
\end{proof}

\begin{rem}
We can follow the procedure of \cite[\S 8]{Mazza06} to compute the tensor product of two bounded above complexes of $\mathrm{MW}$-$t$-sheaves. This follows from \cite[Proposition 3.2]{CD1} and the fact that bounded above complexes of $\mathrm{MW}$-$t$-sheaves whose components are direct sums of representable sheaves are cofibrant.
\end{rem}

\begin{df}
The model structure on $\Comp(\sht)$ of the above corollary is called the \emph{$t$-descent model structure}.
\end{df}

In particular, the category $\Der(\sht)$
 is a triangulated symmetric closed monoidal category.

\begin{num}
We also deduce from the $t$-descent model structure
 that the vertical adjunctions of Corollary \ref{cor:adjunctions_corr}
 induce Quillen adjunctions with respect to the $t$-descent model
 structure on each category involved and so admit derived functors
 as follows:
\begin{equation}\label{eq:chg_top&tr_Der}
\begin{split}
\xymatrix@C=30pt@R=24pt{
\Der(\shx{})\ar@<+2pt>^{\derL \tilde \gamma^*}[r]\ar@<+2pt>^{a}[d]
 & \Der(\shtx{})\ar@<+2pt>^{\derL \pi^*}[r]\ar@<+2pt>^{\tilde a}[d]
     \ar@<+2pt>^{\tilde \gamma_{*}}[l]
 & \Der(\shtVx{})\ar@<+2pt>^{a^{\mathrm{tr}}}[d]
     \ar@<+2pt>^{\pi_{*}}[l] \\
\Der(\shx{\et})\ar@<+2pt>^{\derL \tilde \gamma^*_\et}[r]
    \ar@<+2pt>^{\derR \mathcal O}[u]
 & \Der(\shtx{\et})\ar@<+2pt>^{\derL \pi^*_\et}[r]
	  \ar@<+2pt>^{\derR \mathcal O}[u]
		\ar@<+2pt>^{\tilde \gamma_{\et*}}[l]
 & \Der(\shtVx{\et})
    \ar@<+2pt>^{\derR \mathcal O}[u]
		\ar@<+2pt>^{\pi_{\et*}}[l]
}
\end{split}
\end{equation}
where we have not indicated the topology in the notations
 when it is the Nisnevich topology, denoted by $(a,\mathcal O)$
 for the adjoint pair associated \'etale sheaf and forgetful functor
 and similarly for $\mathrm{MW}$-transfers and transfers.
When the functors are exact, they are trivially derived and so we
 have used the same notation than for their counterpart for
 sheaves.

Note that by definition, the left adjoints in this diagram
 are all monoidal functors and sends the object represented
 by a smooth scheme $X$ (say in degree $0$) to the analogous
 object
\begin{equation}\label{eq:derived&rep}
\derL\tilde \gamma^*\big(R_t(X)\big)=\tilde R_t(X),
 \derL\pi^*\big(\tilde R_t(X)\big)=R^{\mathrm{tr}}(X).
\end{equation}
\end{num}

%
%

\subsection{The $\AA^1$-derived category}

We will now adapt the usual $\AA^1$-homotopy machinery to our context.

\begin{df}
We define the category $\DMtex t$ of $\mathrm{MW}$-motivic complexes
 for the topology $t$
 as the localization of the triangulated category
 $\Der(\sht)$ with respect to the localizing triangulated
 subcategory\footnote{Recall that according to Neeman \cite[3.2.6]{Nee},
 localizing means stable by coproducts.}
 $\mathcal T_{\AA^1}$
 generated by complexes of the form:
$$
\cdots 0 \rightarrow \tilde R_t(\AA^1_X) \xrightarrow{p_*} \tilde R_t(X) \rightarrow 0 \cdots
$$
where $p$ is the projection of the affine line relative to
 an arbitrary smooth $k$-scheme $X$.
 As usual, we define the \emph{$\mathrm{MW}$-motive $\mot(X)$}
 associated to a smooth scheme $X$ as the complex concentrated in degree
 $0$ and equal to the representable $\mathrm{MW}$-$t$-sheaf
 $\tilde{R}_t(X)$.  Respecting our previous conventions,
 we mean the Nisnevich topology when the topology is not indicated.
\end{df}

According to this definition, it is formal that the localizing
 triangulated subcategory $\mathcal T_{\AA^1}$ is
 stable under the derived tensor product of $\Der(\sht)$
 (cf. Corollary \ref{cor:model_Der}). In particular, it induces
 a triangulated monoidal structure on $\DMtex t$.

\begin{num}\label{num:A1-local}
As usual, we can apply the classical techniques of localization
 to our triangulated categories
 and also to our underlying model structure.
 So a complex of $\mathrm{MW}$-$t$-sheaf $E$ is called \emph{$\AA^1$-local}
 if for any smooth scheme $X$ and any integer $i \in \ZZ$,
 the induced map
$$
\Hom_{\Der(\sht)}(\tilde R_t(X),E[i])
 \rightarrow \Hom_{\Der(\sht)}(\tilde R_t(\AA^1_X),E[i])
$$
is an isomorphism. In view of Corollary \ref{cor:compare_Hom&cohomology},
 it amounts to ask that the $t$-cohomology of $\tilde \gamma_*(E)$ is
 $\AA^1$-invariant, or in equivalent words, that $E$ is strictly
 $\AA^1$-local.

Applying Neeman's localization theorem (see
 \cite[9.1.19]{Nee}),\footnote{Indeed recall the derived
 category of $\sht$ is a well generated triangulated category
 according to \cite[Th. 0.2]{Nee2}.} the category
 $\DMtex t$ can be viewed as the full subcategory of $\Der(\sht)$
 whose objects are the $\AA^1$-local complexes. Equivalently,
 the canonical functor $\Der(\sht) \rightarrow \DMtex t$
 admits a fully faithful right adjoint whose essential image
 consists in  $\AA^1$-local complexes. In particular,
 one deduces formally the existence of an $\AA^1$-localization
 functor $L_{\AA^1}:\Der(\sht) \rightarrow \Der(\sht)$.

Besides, we get the following proposition by applying the 
 general left Bousfield localization procedure for proper cellular
 model categories (see \cite[4.1.1]{Hirschhorn03}). 
 We say that a morphism $\phi$ of $\Der(\sht)$
 is an \emph{weak $\AA^1$-equivalence} if for any $\AA^1$-local object $E$,
 the induced map $\Hom(\phi,E)$ is an isomorphism.
\end{num}
\begin{prop}\label{prop:model_DMt}
The category $\Comp(\sht)$
 has a symmetric monoidal model structure
 with weak $\AA^1$-equivalences as weak equivalences and
 cofibrations as defined in Paragraph \ref{num:Wmodel}.
 This model structure is proper and cellular. Moreover, the fibrations for this model structure
 are epimorphisms of complexes whose kernel
 are $t$-flasque and
 $\AA^1$-local complexes.
\end{prop}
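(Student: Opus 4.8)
The plan is to obtain Proposition \ref{prop:model_DMt} as an instance of the left Bousfield localization of the $t$-descent model structure on $\Comp(\sht)$ constructed in Corollary \ref{cor:model_Der}. First I would recall that this $t$-descent model structure is proper and cellular, and that it is symmetric monoidal with respect to the tensor product $\otr$; these are exactly the properties needed to feed it into Hirschhorn's machinery. Next I would identify the set of maps to localize at: namely the morphisms of complexes
\[
\tilde R_t(\AA^1_X) \xrightarrow{p_*} \tilde R_t(X),
\]
viewed as morphisms of complexes concentrated in degree $0$, indexed by smooth $k$-schemes $X$ (up to isomorphism, so that the class is a set). Applying \cite[4.1.1]{Hirschhorn03} to left Bousfield localize the $t$-descent model structure at this set produces a new model structure on $\Comp(\sht)$, automatically left proper and cellular, whose cofibrations coincide with the $t$-descent cofibrations described in Paragraph \ref{num:Wmodel} and whose weak equivalences are the maps that become isomorphisms after applying $\Hom_{\mathrm{Ho}}(-,E)$ for every fibrant $E$ that is local with respect to the chosen set. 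Unwinding the definitions, the latter weak equivalences are precisely the weak $\AA^1$-equivalences of Paragraph \ref{num:A1-local}, and the new fibrant objects are the $t$-flasque complexes that are in addition local with respect to the maps $p_*$, which by Corollary \ref{cor:compare_Hom&cohomology} translates into $t$-flasque plus $\AA^1$-local.

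The steps, in order, are: (1) quote from Corollary \ref{cor:model_Der} that $\Comp(\sht)$ with the $t$-descent model structure is proper, cellular, and symmetric monoidal; (2) form the set $S$ of maps $p_*:\tilde R_t(\AA^1_X)\to \tilde R_t(X)$ and apply Hirschhorn's localization theorem \cite[4.1.1]{Hirschhorn03} to get the left Bousfield localized model structure, which is automatically proper and cellular with the same cofibrations; (3) check that an object is $S$-local (for the $t$-descent fibrant structure) if and only if it is $t$-flasque and $\AA^1$-local --- the nontrivial direction uses Corollary \ref{cor:compare_Hom&cohomology} to rewrite $\Hom_{\Der(\sht)}(\tilde R_t(X),E[i])$ as $t$-hypercohomology, so that $S$-locality is exactly $\AA^1$-invariance of that cohomology; (4) identify the $S$-local weak equivalences with the weak $\AA^1$-equivalences, which is immediate from the definition in Paragraph \ref{num:A1-local}; (5) read off the description of the fibrations from the general theory (epimorphisms with $S$-locally-fibrant kernel) together with step (3); (6) verify that the localized model structure remains symmetric monoidal, i.e. that the pushout-product axiom continues to hold. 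For step (6) the key point is that the generating set $S$ is "monoidal": the class $\mathcal T_{\AA^1}$ it generates is stable under $\otr$ with any representable $\tilde R_t(Y)$ because $p_*\otr \mathrm{id}_{\tilde R_t(Y)}$ is again a map of the same form (using \eqref{eq:sht_monoidal}), so the monoidal left Bousfield localization criterion applies.

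The main obstacle I expect is step (6), the compatibility of the Bousfield localization with the monoidal structure: one must show that tensoring a weak $\AA^1$-equivalence between cofibrant objects with a cofibrant object again yields a weak $\AA^1$-equivalence. This reduces, by the usual cellularity arguments, to checking it on the generators, i.e. that $\tilde R_t(X)\otr(\,\tilde R_t(\AA^1_Y)\to \tilde R_t(Y)\,)$ lies in $\mathcal T_{\AA^1}$; by \eqref{eq:sht_monoidal} this map is $\tilde R_t(\AA^1_{X\times Y})\to \tilde R_t(X\times Y)$, which is one of the defining generators of $\mathcal T_{\AA^1}$, so the localizing subcategory is a $\otimes$-ideal and the monoidal localization theorem of \cite[4.1.1]{Hirschhorn03} (in its monoidal refinement, as in \cite{CD1}) applies. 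Everything else is a bookkeeping translation between the model-categorical locality condition and the explicit $\AA^1$-invariance condition via Corollary \ref{cor:compare_Hom&cohomology}, and poses no real difficulty.
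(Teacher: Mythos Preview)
Your proposal is correct and follows essentially the same route as the paper: the paper's proof is a one-line reference stating that the result ``follows formally from Corollary \ref{cor:model_Der} by the usual localization procedure of model categories, see \cite[\textsection 3]{CD1},'' after having already invoked \cite[4.1.1]{Hirschhorn03} in the lead-in paragraph. The only point where you should be slightly more careful is that Hirschhorn's theorem gives \emph{left} properness automatically, whereas the full (two-sided) properness asserted in the proposition is part of the package provided by \cite[\S 3]{CD1} in this abelian setting rather than by \cite[4.1.1]{Hirschhorn03} alone.
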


The resulting model structure on $\Comp(\sht)$
 will be called the \emph{$\AA^1$-local model structure}.
 The proof of the proposition follows formally from
 Corollary \ref{cor:model_Der} by the usual localization
 procedure of model categories, see \cite[\textsection 3]{CD1}
 for details. Note that the tensor product of two bounded above complexes can be computed as in the derived category. 

\begin{num}
As a consequence of the above discussion, the category $\DMtex t$ is a triangulated
 symmetric monoidal closed category. Besides, it is clear
 that the functors of Corollary \eqref{cor:adjunctions_corr}
 induce Quillen adjunctions for the $\AA^1$-local model
 structures. Equivalently, Diagram \eqref{eq:chg_top&tr_Der}
 is compatible with $\AA^1$-localization and induces
 adjunctions of triangulated categories:
\begin{equation}\label{eq:chg_top&tr_DMeff}
\begin{split}
\xymatrix@C=30pt@R=24pt{
\DAe\ar@<+2pt>^{\derL \tilde \gamma^*}[r]\ar@<+2pt>^{a}[d]
 & \DMte\ar@<+2pt>^{\derL \pi^*}[r]\ar@<+2pt>^{\tilde a}[d]
     \ar@<+2pt>^{\tilde \gamma_{*}}[l]
 & \DMe\ar@<+2pt>^{a^{tr}}[d]
     \ar@<+2pt>^{\pi_{*}}[l] \\
\DAex{\et}\ar@<+2pt>^{\derL \tilde \gamma^*_\et}[r]
    \ar@<+2pt>^{\derR \mathcal O}[u]
 & \DMtex{\et}\ar@<+2pt>^{\derL \pi^*_\et}[r]
	  \ar@<+2pt>^{\derR \mathcal O}[u]
		\ar@<+2pt>^{\tilde \gamma_{\et*}}[l]
 & \DMex{\et}.
    \ar@<+2pt>^{\derR \mathcal O}[u]
		\ar@<+2pt>^{\pi_{\et*}}[l]
}
\end{split}
\end{equation}
In this diagram, the left adjoints are all monoidal and send
 the different variant of motives represented by a smooth scheme $X$
 to the analogous motive. In particular,
 $$\derL \pi^*\mot(X)=\motV(X).$$
Also, the functors $\tilde \gamma_{t*}$ and $\pi_{t*}$
 for $t=\nis, \et$ (or following our conventions, $t=\varnothing, \et$)
 are conservative. Note moreover that their analogues in diagram
 \eqref{eq:chg_top&tr_Der} preserve $\AA^1$-local objects
 and so commute with the $\AA^1$-localization functor.
 Therefore one deduces from Morel's $\AA^1$-connectivity
 theorem \cite{Morel05b} the following result.
\end{num}
\begin{thm}
Assume $k$ is a perfect field. Let $E$ be a complex of $\mathrm{MW}$-sheaves concentrated in positive degrees.
 Then the complex $L_{\AA^1} E$ is concentrated in positive degrees.
\end{thm}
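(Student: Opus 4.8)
The statement is the analogue of Morel's $\AA^1$-connectivity theorem for $\mathrm{MW}$-motivic complexes, and the natural strategy is to reduce it to the known case of sheaves (without transfers) via the conservative, exact functor $\tilde\gamma_*$. The plan is to exploit the key observation already highlighted in the paragraph preceding the statement: the functor $\tilde\gamma_*\colon\sht\to\sh$ (more precisely its derived version in Diagram \eqref{eq:chg_top&tr_Der}) preserves $\AA^1$-local objects and therefore commutes with the $\AA^1$-localization functor $L_{\AA^1}$. This is the crucial structural input, and it follows from Corollary \ref{cor:compare_Hom&cohomology}: a complex $E$ of $\mathrm{MW}$-$t$-sheaves is $\AA^1$-local if and only if the $t$-cohomology of $\tilde\gamma_*(E)$ is $\AA^1$-invariant, which is a condition visibly only depending on $\tilde\gamma_*(E)$ as a complex of sheaves.

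First I would fix $E$ a complex of $\mathrm{MW}$-sheaves concentrated in positive (homological) degrees, i.e. $\H_n(E)=0$ for $n\le 0$. Since $\tilde\gamma_*$ is exact (Lemma \ref{lm:compare_transfers}, Proposition \ref{prop:exist_associated-W-t-sheaf}), it commutes with taking homology sheaves, so $\tilde\gamma_*(E)$ is likewise a complex of Nisnevich sheaves concentrated in positive degrees. Next I would apply Morel's $\AA^1$-connectivity theorem \cite{Morel05b} in the form: over a perfect field, the $\AA^1$-localization of a complex of Nisnevich sheaves concentrated in positive degrees is again concentrated in positive degrees. (One phrases this in $\DAe$, the $\AA^1$-derived category of sheaves, which is exactly the target of the left-hand column of Diagram \eqref{eq:chg_top&tr_DMeff}.) This gives that $L_{\AA^1}^{\sh}\big(\tilde\gamma_*(E)\big)$ is concentrated in positive degrees.

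Then I would invoke the commutation $\tilde\gamma_*\circ L_{\AA^1} \simeq L_{\AA^1}^{\sh}\circ\tilde\gamma_*$, which holds because $\tilde\gamma_*$ sends $\AA^1$-local objects to $\AA^1$-local objects and weak $\AA^1$-equivalences to weak $\AA^1$-equivalences (both reduce, via Corollary \ref{cor:compare_Hom&cohomology}, to statements about the underlying sheaf complex). Concretely: the canonical map $E\to L_{\AA^1}E$ is a weak $\AA^1$-equivalence, hence so is $\tilde\gamma_*(E)\to\tilde\gamma_*(L_{\AA^1}E)$; and $\tilde\gamma_*(L_{\AA^1}E)$ is $\AA^1$-local; therefore it is (canonically isomorphic to) $L_{\AA^1}^{\sh}(\tilde\gamma_*(E))$. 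Combining with the previous step, $\tilde\gamma_*(L_{\AA^1}E)$ is concentrated in positive degrees, and since $\tilde\gamma_*$ is conservative and exact, so is $L_{\AA^1}E$ itself — its negative and zeroth homology sheaves vanish because their images under $\tilde\gamma_*$ do. This completes the argument.

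The main obstacle — and the one deserving a careful sentence in the write-up — is justifying rigorously that $\tilde\gamma_*$ commutes with $\AA^1$-localization. Although morally clear from Corollary \ref{cor:compare_Hom&cohomology}, one should be slightly careful that the $\AA^1$-localization functors on the two sides are compatible: this is precisely the content of the assertion in the excerpt that the functors $\tilde\gamma_{t*}$ in Diagram \eqref{eq:chg_top&tr_Der} "preserve $\AA^1$-local objects and so commute with the $\AA^1$-localization functor," which in turn rests on the identification of $\AA^1$-local $\mathrm{MW}$-complexes with those whose underlying sheaf complex has $\AA^1$-invariant cohomology. Everything else is a formal consequence of exactness and conservativity of $\tilde\gamma_*$ together with Morel's theorem applied downstairs.
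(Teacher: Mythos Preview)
Your proposal is correct and follows exactly the paper's approach: apply the conservative, exact functor $\tilde\gamma_*$, use that it commutes with $\AA^1$-localization, and reduce to Morel's connectivity theorem \cite[6.1.8]{Morel05b}. The paper's proof is in fact just the one-sentence version of what you wrote.
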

Indeed, to check this, one needs only to apply the functor 
 $\tilde \gamma_*$ as it is conservative and so we are reduced to Morel's
 theorem \cite[6.1.8]{Morel05b}.

\begin{cor}\label{cor:pre-df:htp_tstruct_eff}
Under the assumption of the previous theorem, 
 the triangulated category $\DMte$ admits a unique $t$-structure
 such that the functor 
 $\tilde \gamma_*:\DMte \rightarrow \Der\big(\shtx{}\big)$
 is $t$-exact.
\end{cor}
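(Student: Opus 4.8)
The plan is to obtain the $t$-structure by pulling the standard homological $t$-structure back along $\tilde\gamma_*$: its uniqueness will be a formal consequence of the fact that $\tilde\gamma_*$ is exact and conservative, and its existence will be reduced to the assertion that $\AA^1$-local complexes of $\mathrm{MW}$-sheaves have strictly $\AA^1$-invariant homology sheaves. Throughout I would regard $\DMte$ as the full subcategory of $\AA^1$-local complexes of $\Der(\sht)$, and write $\underline{H}_n(C)$ for the homology sheaves of a complex $C$.

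First I would propose the candidate $\DMte_{\geq 0}:=\DMte\cap\Der(\sht)_{\geq 0}$ and $\DMte_{\leq 0}:=\DMte\cap\Der(\sht)_{\leq 0}$, i.e. the $\AA^1$-local complexes whose homology sheaves are concentrated in non-negative, resp. non-positive, degrees. By construction $\tilde\gamma_*$ carries $\DMte_{\geq 0}$ and $\DMte_{\leq 0}$ into $\Der(\shtx{})_{\geq 0}$ and $\Der(\shtx{})_{\leq 0}$ (immediate from the definition and the exactness of $\tilde\gamma_*$), so the pair above will make $\tilde\gamma_*$ $t$-exact as soon as it is a $t$-structure. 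It is moreover the only one with this property: given any $t$-structure $(\DMte'_{\geq 0},\DMte'_{\leq 0})$ for which $\tilde\gamma_*$ is $t$-exact and any $C\in\DMte$, applying $\tilde\gamma_*$ to the truncation triangle $\tau_{\geq 1}C\to C\to\tau_{\leq 0}C$ produces, by uniqueness of truncation triangles, the standard one for $\tilde\gamma_*(C)$; since $\tilde\gamma_*$ is conservative, $C\in\DMte'_{\leq 0}$ iff $\tau_{\geq 1}C=0$ iff $\tilde\gamma_*(C)\in\Der(\shtx{})_{\leq 0}$, and dually for $\DMte'_{\geq 0}$. So only the existence of $(\DMte_{\geq 0},\DMte_{\leq 0})$ as a $t$-structure remains.

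Stability under shifts and the orthogonality axiom are inherited from the standard $t$-structure on $\Der(\sht)$, so I would be left with producing truncation triangles inside $\DMte$; for this it suffices to prove the following condition $(\star)$: the standard truncations $\tau_{\leq n}C$ and $\tau_{\geq n}C$ of an $\AA^1$-local complex $C$, formed in $\Der(\sht)$, are again $\AA^1$-local --- then $\tau_{\geq 1}C\to C\to\tau_{\leq 0}C$ is the triangle we want. To prove $(\star)$ I would combine two facts. First, a complex $D$ whose homology sheaves are all strictly $\AA^1$-invariant is $\AA^1$-local: by Corollary~\ref{cor:compare_Hom&cohomology} and the criterion of Paragraph~\ref{num:A1-local} one needs only that $X\mapsto\mathbb{H}^*_t(X,\tilde\gamma_*D)$ be $\AA^1$-invariant, and this follows from the hypercohomology spectral sequence $E_2^{p,q}=\H^p_t\big(X,\underline{H}_{-q}(D)\big)\Rightarrow\mathbb{H}^{p+q}_t(X,\tilde\gamma_*D)$, whose $E_2$-terms are $\AA^1$-invariant by the strict $\AA^1$-invariance of the $\underline{H}_{-q}(D)$ (convergence being clear once the homology sheaves of $D$ vanish in sufficiently negative degrees, and the general case being reduced to it by writing $D=\operatorname{holim}_m\tau_{\geq -m}D$). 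Second --- and this is the essential input --- the homology sheaves of an $\AA^1$-local complex are strictly $\AA^1$-invariant: proceeding along the Postnikov tower as in Voevodsky's treatment of the homotopy $t$-structure on $\DMe$ (compare \cite[Ch.~5, \S3]{FSV}), and controlling the error terms by the $\AA^1$-connectivity theorem stated above, one reduces to the statement that an $\AA^1$-invariant $\mathrm{MW}$-presheaf has a strictly $\AA^1$-invariant associated Nisnevich sheaf --- which is exactly the $\mathrm{MW}$-analogue of Voevodsky's theorem, obtained in Section~\ref{sec:framed} by combining Proposition~\ref{prop:bunchoffunctors} with \cite[Theorem~1.1]{Garkusha15}. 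Granting this, the homology sheaves of $\tau_{\leq n}C$ and $\tau_{\geq n}C$ are among those of $C$, hence strictly $\AA^1$-invariant, so both truncations are $\AA^1$-local; this establishes $(\star)$, and with it the $t$-structure.

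I expect the main obstacle to be precisely this last step --- that an $\AA^1$-local complex has strictly $\AA^1$-invariant homology sheaves, in the unbounded generality required. It rests on the $\mathrm{MW}$-version of Voevodsky's strict homotopy invariance theorem, which in this paper is not proved directly but imported from \cite{Garkusha15} through the functor constructed in Section~\ref{sec:framed}, the $\AA^1$-connectivity theorem serving to bridge the bounded and unbounded cases. Everything else --- the $t$-exactness of $\tilde\gamma_*$ and the uniqueness of the $t$-structure --- is then purely formal.
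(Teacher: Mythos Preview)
Your argument is correct, but it takes a heavier route than the paper's and imports a result that, in the paper's logical order, comes \emph{after} this corollary. The paper deduces the $t$-structure directly from the $\AA^1$-connectivity theorem alone: since $L_{\AA^1}$ preserves $\Der(\sht)_{\geq n}$, the standard localization-of-$t$-structures formalism gives a $t$-structure on the $\AA^1$-local objects with truncation $\tau^{\DMte}_{\geq n}=L_{\AA^1}\circ\tau_{\geq n}$; one then checks formally (using that $\Hom_{\Der}(E,C)=\Hom_{\DMte}(L_{\AA^1}E,C)$ for $C$ local and connectivity again) that $\DMte_{\leq 0}=\DMte\cap\Der(\sht)_{\leq 0}$, so the inclusion is $t$-exact on both sides. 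No appeal to strict $\AA^1$-invariance of homology sheaves is needed.

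Your approach instead proves condition $(\star)$ --- that the ordinary truncations of an $\AA^1$-local complex are already $\AA^1$-local --- which is a strictly stronger statement, essentially equivalent to Corollary~\ref{cor:A1-local_complexes}. To get it you invoke the $\mathrm{MW}$-analogue of Voevodsky's strict homotopy invariance theorem (Theorem~\ref{thm:Wsh&A1}, via Garkusha--Panin), which the paper only states and uses \emph{after} establishing the present corollary. What you gain is the sharper description of the truncation (no $L_{\AA^1}$ needed) and a direct identification of the heart; what you pay is the dependence on a deeper input and, strictly speaking, on the additional hypothesis that $k$ be infinite (the connectivity theorem needs only perfectness). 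Within the paper's standing conventions this is harmless, but it is worth noting that the corollary as stated follows from connectivity alone.
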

Note that the truncation functor on $\DMte$ is obtained by applying
 to an $\AA^1$-local complex the usual truncation functor of
 $\Der(\shtx{})$ and then the $\AA^1$-localization functor.

\begin{df}\label{df:htp_tstruct_eff}
If $k$ is a perfect field, 
 the $t$-structure on $\DMte$ obtained in the previous corollary
 will be called the \emph{homotopy $t$-structure}.
\end{df}


\begin{rem}
Of course, the triangulated categories $\DMe$ and $\DAe$
 are also equipped with $t$-structures, called in each 
 cases homotopy $t$-structures
 --- in the first case, it is due to Voevodsky and in the second
 to Morel.

It is clear from the definitions that the functors
 $\tilde \gamma_*$ and $\pi_*$ in Diagram \eqref{eq:chg_top&tr_DMeff}
 are $t$-exact.
\end{rem}

As in the case of Nisnevich sheaves with transfers,
 we can describe nicely $\AA^1$-local objects and
 the $\AA^1$-localization functor due to the following theorem.
 
\begin{thm}\label{thm:Wsh&A1}
Assume $k$ is an infinite perfect field. Let $F$ be an $\AA^1$-invariant $\mathrm{MW}$-presheaf. Then, the associated $\mathrm{MW}$-sheaf $\tilde a(F)$ is
 strictly $\AA^1$-invariant. Moreover, the Zariski sheaf associated with $F$
 coincides with $\tilde a(F)$ and the natural map
$$
\H^i_\zar(X,\tilde a(F)) \rightarrow
\H^i_\nis(X,\tilde a(F))
$$
is an isomorphism for any smooth scheme $X$.
\end{thm}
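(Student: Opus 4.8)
The plan is to deduce everything from Theorem~\ref{thm:A1local_framedPSh} by pulling $F$ back to a framed presheaf along the functor $\alpha'\colon\ZZ\mathrm{F}_*(k)\to\smc$ of Proposition~\ref{prop:bunchoffunctors}, exactly as announced in the introduction. Set $G:=(\alpha')^*F=F\circ\alpha'$, a linear framed presheaf. The key observation is that the underlying presheaf of $G$ on $\sm$ is nothing but $\tilde\gamma_*(F)$: indeed the triangle of Proposition~\ref{prop:bunchoffunctors} identifies the composite $\sm\to\ZZ\mathrm{F}_*(k)\xrightarrow{\alpha'}\smc$ with $\tilde\gamma$. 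Hence $G$ is $\AA^1$-invariant, since $\tilde\gamma_*(F)$ is so by hypothesis; and by Example~\ref{ex:stable}, which gives $\alpha(\sigma_X)=\mathrm{Id}_X$, the presheaf $G$ is even stable, in particular quasi-stable. Passing to the underlying presheaf of abelian groups changes nothing here, and is harmless for what follows, as strict $\AA^1$-invariance is a property of the underlying Nisnevich sheaf of abelian groups.

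Next I would apply Theorem~\ref{thm:A1local_framedPSh} to $G$: as $k$ is infinite perfect, the associated Nisnevich sheaf $G_\nis$ is strictly $\AA^1$-invariant. It remains to relate $G_\nis$ to $\tilde a(F)$. Since strict $\AA^1$-invariance only concerns the underlying Nisnevich sheaf of abelian groups, it suffices to observe that the Nisnevich sheafification of a quasi-stable framed presheaf is performed on the underlying presheaf on $\sm$ (the framed transfers then being transported), so that the underlying Nisnevich sheaf of $G_\nis$ is $a(\tilde\gamma_*(F))$, which by Proposition~\ref{prop:exist_associated-W-t-sheaf}(1) equals $\tilde\gamma_*(\tilde a(F))$. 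By definition the $\mathrm{MW}$-sheaf $\tilde a(F)$ is strictly $\AA^1$-invariant precisely when $\tilde\gamma_*(\tilde a(F))$ is, so the first assertion follows.

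For the last two assertions, I would note that $M:=\tilde\gamma_*(\tilde a(F))$ is now a strictly $\AA^1$-invariant Nisnevich sheaf over the perfect field $k$, hence Morel's theory \cite{Morel08} endows it with a Rost--Schmid resolution by sheaves that are flasque for both the Zariski and the Nisnevich topologies. This single complex computes $\H^*_\zar(X,M)$ and $\H^*_\nis(X,M)$ simultaneously, which yields the comparison isomorphism in all degrees and, in degree $0$, that $M$ is already a Zariski sheaf equal to the Zariski sheafification of $\tilde\gamma_*(F)$; reinstating the $\mathrm{MW}$-transfer structure via Lemma~\ref{lm:exists-Wtr} gives the statement about the Zariski sheaf associated with $F$. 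Alternatively, both facts can be obtained as the framed analogue of Voevodsky's Zariski--Nisnevich comparison for homotopy invariant presheaves with transfers, applied to $G$.

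As the introduction warns, the argument is deliberately lazy and there is no genuine obstacle here: all the substance sits in the already-established Theorem~\ref{thm:A1local_framedPSh} and in the construction of $\alpha'$ (Proposition~\ref{prop:bunchoffunctors}). The only points that demand care are the purely formal identifications of the various sheafification functors and the verification that ``strictly $\AA^1$-invariant'' in the framed sense coincides with the notion used here — together with the bookkeeping needed to transport properties back and forth between $\sm$, $\ZZ\mathrm{F}_*(k)$ and $\smc$.
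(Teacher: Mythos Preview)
Your proposal is correct and follows essentially the same approach as the paper: pull back along $\alpha'$ to a stable framed presheaf, invoke Theorem~\ref{thm:A1local_framedPSh}, identify the underlying Nisnevich sheaf via the commutative square of Proposition~\ref{prop:exist_associated-W-t-sheaf}(1), and then appeal to Morel's Rost--Schmid complex (\cite[Corollary~5.43]{Morel08}) for the Zariski comparison. You have merely spelled out in more detail the identifications that the paper leaves implicit in its reference to Corollary~\ref{cor:adjunctions_corr}.
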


\begin{proof}
Recall that we have a functor $(\alpha^\prime)^*:\psht\to \mathcal S\pshfr$. In view of Theorem \ref{thm:A1local_framedPSh}, the Nisnevich sheaf associated to the presheaf $(\alpha^\prime)^*(F)$ is strictly $\AA^1$-invariant and quasi-stable. It follows that $\tilde a(F)$ is strictly $\AA^1$-invariant by Corollary \ref{cor:adjunctions_corr}. 
Now, a strictly $\AA^1$-invariant sheaf admits a Rost-Schmid complex in the sense of \cite[\S 5]{Morel08} and the result follows from \cite[Corollary 5.43]{Morel08}. 
\end{proof}

\begin{rem}
It would be good to have a proof intrinsic to $\mathrm{MW}$-motives of the above theorem. This will be worked out in H\r{a}kon Andreas Kolderup's thesis (\cite{Kolderup17}).
\end{rem}

As in the case of Voeovdsky's theory of motivic complexes,
 this theorem has several important corollaries.
 
\begin{cor}\label{cor:A1-local_complexes}
Assume $k$ is an infinite perfect field. Then, a complex $E$ of $\mathrm{MW}$-sheaves is $\AA^1$-local
 if and only if its homology (Nisnevich) sheaves 
 are $\AA^1$-invariant.
 The heart of the homotopy $t$-structure (Def. \ref{df:htp_tstruct_eff})
 on $\DMte$ consists of $\AA^1$-invariant $\mathrm{MW}$-sheaves.
\end{cor}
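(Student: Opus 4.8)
The plan is to deduce the whole statement from Theorem~\ref{thm:Wsh&A1}, which carries all the real content, together with the identification of morphisms in $\Der(\sht)$ with $t$-hypercohomology furnished by Corollary~\ref{cor:compare_Hom&cohomology}. First I would establish the equivalence for a fixed complex $E$ of $\mathrm{MW}$-sheaves, and then read off the description of the heart.

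For the implication ``$E$ $\AA^1$-local $\Rightarrow$ homology sheaves $\AA^1$-invariant'', the idea is to recognise the homology sheaves as sheaves associated to $\AA^1$-invariant $\mathrm{MW}$-presheaves. For each $n \in \ZZ$, set
\[
P_n \colon X \longmapsto \Hom_{\Der(\sht)}\bigl(\tilde R_t(X),E[n]\bigr) .
\]
Since $X \mapsto \tilde R_t(X)$ is functorial on $\smc$, this is a $\mathrm{MW}$-presheaf, and by the very definition of $\AA^1$-locality (Paragraph~\ref{num:A1-local}) the map $P_n(X)\to P_n(\AA^1_X)$ is bijective for every smooth $X$, i.e. $P_n$ is $\AA^1$-invariant. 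Using Corollary~\ref{cor:compare_Hom&cohomology} (so that $P_n$ is the $n$-th $t$-hypercohomology presheaf of $\tilde\gamma_*(E)$) together with the compatibility of $\tilde a$ with $t$-sheafification from Proposition~\ref{prop:exist_associated-W-t-sheaf}, one checks that $\tilde a(P_n)$ is exactly the homology $\mathrm{MW}$-sheaf $H_{-n}(E)$. Theorem~\ref{thm:Wsh&A1} then yields that $H_{-n}(E) = \tilde a(P_n)$ is strictly $\AA^1$-invariant, in particular $\AA^1$-invariant.

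For the converse, assume every $H_n(E)$ is $\AA^1$-invariant. Each such sheaf is $\tilde a$ applied to its own (automatically $\AA^1$-invariant) underlying $\mathrm{MW}$-presheaf, hence is strictly $\AA^1$-invariant by Theorem~\ref{thm:Wsh&A1}. Now fix a smooth scheme $X$ and feed the $H_n(E)$ into the Nisnevich hypercohomology spectral sequence
\[
E_2^{p,q} = \H^p_\nis\bigl(X,H_{-q}(E)\bigr) \Longrightarrow \mathbb{H}^{p+q}_t(X,E),
\]
comparing it via the projection $\AA^1_X \to X$ with the corresponding spectral sequence over $\AA^1_X$. Strict $\AA^1$-invariance of the $H_n(E)$ makes the comparison an isomorphism on the $E_2$-page; since the $E_2$-page is concentrated in the strip $0 \le p \le \dim X$ (the Nisnevich cohomological dimension of a smooth $k$-scheme being finite) the spectral sequences converge, so the comparison is an isomorphism on the abutments. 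Thus $\mathbb{H}^{\ast}_t(X,E) \to \mathbb{H}^{\ast}_t(\AA^1_X,E)$ is an isomorphism for all $X$, which by Corollary~\ref{cor:compare_Hom&cohomology} is precisely the $\AA^1$-locality of $E$. The hard part here is really just this convergence statement for a possibly unbounded $E$; once it is granted, everything is formal.

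Finally, for the heart: by Corollary~\ref{cor:pre-df:htp_tstruct_eff} the functor $\tilde\gamma_*$ is $t$-exact and conservative, so an object $E$ of $\DMte$ lies in the heart of the homotopy $t$-structure iff $\tilde\gamma_*(E)$ is concentrated in homological degree $0$, equivalently (since $\tilde\gamma_*$ is exact and conservative) iff $E$ is isomorphic in $\Der(\sht)$ to $F[0]$ for a single $\mathrm{MW}$-sheaf $F$ with $F[0]$ moreover $\AA^1$-local. By the equivalence just proved this holds precisely when $F$ is $\AA^1$-invariant, which identifies the heart with the category of $\AA^1$-invariant $\mathrm{MW}$-sheaves.
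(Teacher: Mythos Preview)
Your argument is correct and follows essentially the same route as the paper. The only cosmetic difference is in the forward implication: the paper first replaces $K$ by a Nisnevich-fibrant $\AA^1$-local model $K'$ and then uses the ``local'' property to identify $\H^n(K'(X))$ with $\Hom_{\Der(\sht)}(\tilde R_t(X),K'[n])$, whereas you go straight through Corollary~\ref{cor:compare_Hom&cohomology} and sheafify the hypercohomology presheaf; these two manoeuvres produce the same $\AA^1$-invariant $\mathrm{MW}$-presheaf $P_n$ to which Theorem~\ref{thm:Wsh&A1} is applied. The converse via the hypercohomology spectral sequence and the description of the heart are handled identically.
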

The proof is classical. We recall it for the comfort of the reader.

\begin{proof}
Let $K$ be an $\AA^1$-local complex of $\mathrm{MW}$-sheaves over $k$.
 Let us show that its homology sheaves are $\AA^1$-invariant.
 According to the existence of the $\AA^1$-local model structure
 on $\Comp(\sh)$ (Proposition \ref{prop:model_DMt}),
 there exist a Nisnevich fibrant and $\AA^1$-local complex
 $K'$ and a weak $\AA^1$-equivalence:
$$
K \xrightarrow \phi K'.
$$
 As $K$ and $K'$ are $\AA^1$-local, the map $\phi$ is a
 quasi-isomorphism. In particular, we can replace $K$ by $K'$.
 In other words, we can assume $K$ is Nisnevich fibrant thus local
 (Theorem \ref{thm:CD}). 
 Then we get:
$$
\H^n\big(K(X)\big) \simeq
 \Hom_{\Der(\sht)}\big(\repR X,K[n]\big)
$$
according to the definition of local in Paragraph \ref{num:Wmodel}.
 This implies in particular that the cohomology presheaves of $K$ are
 $\AA^1$-invariant. We conclude using Theorem \ref{thm:Wsh&A1}.

Assume conversely that $K$ is a complex of $\mathrm{MW}$-sheaves
 whose homology sheaves are $\AA^1$-invariant. Let us show that
 $K$ is $\AA^1$-local. According to Corollary
 \ref{cor:compare_Hom&cohomology}, we need only to show its Nisnevich
 hypercohomology is $\AA^1$-invariant. Then we apply the Nisnevich
 hypercohomology spectral sequence for any smooth scheme $X$:
$$
E_2^{p,q}=\H^p_\nis(X,\H^q_\nis(K)) \Rightarrow \mathbb{H}^n_\nis(X,K).
$$
As the cohomological dimension of the Nisnevich topology
 is bounded by the dimension of $X$, the $E_2$-term is concentrated
 in degree $p \in [0,\dim(X)]$ and the spectral sequence converges (\cite[Theorem 0.3]{Suslin00}).
 It is moreover functorial in $X$. Therefore it is enough
 to show the map induced by the projection
$$
\H^p_\nis(X,\H^q_\nis(K)) \rightarrow \H^p_\nis(\AA^1_X,\H^q_\nis(K))
$$
is an isomorphism to conclude. By assumption the sheaf
 $\H^q_\nis(K)$ is $\AA^1$-invariant so Theorem \ref{thm:Wsh&A1} applies
 again.

As the functor $\tilde \gamma_*$ is $t$-exact by
 Corollary \ref{cor:pre-df:htp_tstruct_eff}, the conclusion 
 about the heart of $\DMte$ follows.
\end{proof}

\begin{cor}\label{cor:cohomcomplex}
Let $K$ be an $\AA^1$-local complex of $\mathrm{MW}$-sheaves. Then we have 
 \[
 \mathbb{H}^i_{\zar}(X,K)= \mathbb{H}^i_{\nis}(X,K)
 \]
for any smooth scheme $X$ and any $i\in \ZZ$.
\end{cor}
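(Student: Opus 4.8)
The plan is to reduce the statement to Theorem \ref{thm:Wsh&A1} by means of a hypercohomology spectral sequence argument, completely parallel to the one used in the second half of the proof of Corollary \ref{cor:A1-local_complexes}. First I would observe that, since $K$ is $\AA^1$-local, Corollary \ref{cor:A1-local_complexes} tells us that each homology sheaf $\H^q_{\nis}(K)$ is an $\AA^1$-invariant $\mathrm{MW}$-sheaf. The key input is then the last assertion of Theorem \ref{thm:Wsh&A1}: for a strictly $\AA^1$-invariant $\mathrm{MW}$-sheaf $G$ the comparison map $\H^i_\zar(X,G)\to\H^i_\nis(X,G)$ is an isomorphism for every smooth $X$. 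Since $\H^q_{\nis}(K)$ arises as $\tilde a(F)$ for the $\AA^1$-invariant $\mathrm{MW}$-presheaf $F=\H^q(K)$ (the presheaf homology), it is strictly $\AA^1$-invariant by Theorem \ref{thm:Wsh&A1}, and the Zariski and Nisnevich sheafifications agree; hence its Zariski and Nisnevich cohomology coincide on all smooth schemes.

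Next I would set up the two hypercohomology spectral sequences, one for the Zariski and one for the Nisnevich topology, associated with the (possibly unbounded) complex $\tilde\gamma_*(K)$ of sheaves:
$$
{}^{\zar}E_2^{p,q}=\H^p_\zar\big(X,\H^q_\zar(K)\big)\Rightarrow \mathbb{H}^{p+q}_\zar(X,K),
\qquad
{}^{\nis}E_2^{p,q}=\H^p_\nis\big(X,\H^q_\nis(K)\big)\Rightarrow \mathbb{H}^{p+q}_\nis(X,K).
$$
The natural morphism of sites $\nis\to\zar$ induces a morphism of spectral sequences. On $E_2$-pages this morphism factors as $\H^p_\zar(X,\H^q_\zar(K))\to\H^p_\zar(X,\H^q_\nis(K))\to\H^p_\nis(X,\H^q_\nis(K))$: the first map comes from the canonical comparison of the Zariski sheafification of $\H^q(K)$ with (the restriction to the small Zariski site of) its Nisnevich sheafification, which is an isomorphism by the strict $\AA^1$-invariance just invoked, and the second map is an isomorphism again by Theorem \ref{thm:Wsh&A1}. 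Both spectral sequences are concentrated in the range $p\in[0,\dim X]$ by cohomological dimension (for Nisnevich this is the bound used already in the proof of Corollary \ref{cor:A1-local_complexes}; for Zariski it is Grothendieck's vanishing theorem), so they converge, and an isomorphism on $E_2$-pages forces an isomorphism on abutments. This yields $\mathbb{H}^i_\zar(X,K)\cong\mathbb{H}^i_\nis(X,K)$ for all $i$, and one checks the isomorphism is the one induced by the change-of-topology map.

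The main obstacle I anticipate is purely bookkeeping rather than conceptual: one must be slightly careful that $K$ is allowed to be unbounded, so the hypercohomology spectral sequences have to be the ones attached to a Cartan--Eilenberg (or injective) resolution, and their convergence needs the finite cohomological dimension bound to make the relevant filtrations exhaustive and Hausdorff; this is exactly the point already handled in Corollary \ref{cor:A1-local_complexes} via \cite[Theorem 0.3]{Suslin00} on the Nisnevich side, and the analogous Zariski statement is classical. A second minor point is to make sure the homology sheaves $\H^q_\zar(K)$ and $\H^q_\nis(K)$ are identified compatibly; this follows because both are the respective sheafifications of the same presheaf homology $\H^q(\tilde\gamma_*K)$, and Theorem \ref{thm:Wsh&A1} says these sheafifications agree. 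With these identifications in place the argument is a formal comparison of spectral sequences.
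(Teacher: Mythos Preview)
Your proposal is correct and follows essentially the same strategy as the paper: reduce to Theorem \ref{thm:Wsh&A1} via a comparison of hypercohomology spectral sequences, using finite cohomological dimension for convergence. The only difference is in the packaging of the Zariski spectral sequence. You write its $E_2$-page with the Zariski homology sheaves $\H^q_\zar(K)$ and then identify these with $\H^q_\nis(K)$ via Theorem \ref{thm:Wsh&A1} applied to the presheaf homology; the paper instead phrases the comparison through the change-of-topology adjunction $(\alpha^*,\derR\alpha_*)$ and builds the Zariski spectral sequence from the truncation tower in $\Der(\sht)$, so that both $E_2$-pages are directly expressed in terms of $\H^q_\nis(K)$ and no separate identification step is needed. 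Your version requires the presheaf homology $\H^q(K)$ to be an $\AA^1$-invariant $\mathrm{MW}$-presheaf, which is not automatic from $K$ being $\AA^1$-local; as you yourself anticipate, this is arranged by first passing to a Nisnevich-fibrant model (exactly as in the first paragraph of the proof of Corollary \ref{cor:A1-local_complexes}). The paper's formulation sidesteps this small manoeuvre, but the mathematical content is the same.
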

\begin{proof}
The proof uses the same principle as in the previous result.
 Let us first consider the change of topology adjunction:
$$
\alpha^*:\shx \zar \leftrightarrows \shx \nis:\alpha_*
$$
where $\alpha^*$ is obtained using the functor ``associated
 Nisnevich sheaf functor"
 and $\alpha_*$ is just the forgetful functor.
 This adjunction can be derived (using for example the injective
 model structures) and induces:
$$
\alpha^*:\Der(\shx \zar) \leftrightarrows \Der(\shx \nis):\derR \alpha_*
$$
--- note indeed that $\alpha^*$ is exact.
 Coming back to the statement of the corollary, we have to show
 that the adjunction map:
\begin{equation}\label{eq:compar_zar/nis}
 \tilde \gamma_*(K) \rightarrow \derR \alpha_* \alpha^*( \tilde\gamma_*(K))
\end{equation}
is a quasi-isomorphism. Let us denote abusively by $K$ the
 sheaf $\tilde \gamma_*(K)$. Note that this will be harmless as
 $\tilde \gamma_*(\H^q_\nis(K))=\H^q_\nis(\tilde \gamma_*K)$.
 With this abuse of notation, one has canonical identifications:
\begin{align*}
\Hom_{\Der(\shx \nis)}(\rep X,\derR \alpha_* \alpha^*(K))
 &=\H^p_\zar(X,K) \\
\Hom_{\Der(\shx \nis)}(\rep X,\derR \alpha_* \alpha^*(\H^q_\nis(K))
 &=\H^p_\zar(X,\H^q_\nis(K))
\end{align*}
 Using now the tower of truncation
 of $K$ for the standard $t$-structure on $\Der(\sht)$
 --- or equivalently $\Der(\sh)$ ---
 and the preceding identifications, one gets a spectral sequence:
$$
^{\zar}E_2^{p,q}=\H^p_\zar(X,\H^q_\nis(K))
 \Rightarrow \mathbb{H}^{p+q}_\zar(X,K)
$$
and the morphism \eqref{eq:compar_zar/nis} induces a morphism
 of spectral sequence:
\begin{align*}
E_2^{p,q}=\H^p_\nis(X,\H^q_\nis(K))
 \longrightarrow &^{\zar}E_2^{p,q}=\H^p_\zar(X,\H^q_\nis(K)) \\
 & \Rightarrow \mathbb{H}^{p+q}_\nis(X,K)
 \longrightarrow \mathbb{H}^{p+q}_\zar(X,K).
\end{align*}
The two spectral sequences converge (as the Zariski and cohomological
 dimension of $X$ are bounded). According to Theorem \ref{thm:Wsh&A1},
 the map on the $E_2$-term is an isomorphism so the map
 on the abutment must be an isomorphism and this concludes.
\end{proof}

\begin{num}
Following Voevodsky, given a complex $E$ of $\mathrm{MW}$-sheaves, we define its
 associated Suslin complex as the following complex of
 sheaves:\footnote{Explicitly, this complex associates
 to a smooth scheme $X$ the total complex (for coproducts)
 of the bicomplex $E(\Delta^* \times X)$.}
$$
\sus E:=\uHom(\tilde R_t(\Delta^*),E)
$$
where $\Delta^*$ is the standard cosimplicial scheme.
\end{num}

\begin{cor}\label{cor:LA1}
Assume $k$ is an infinite perfect field.

Then for any complex $E$ of $\mathrm{MW}$-sheaves, there
 exists a canonical quasi-isomorphism:
$$
L_{\AA^1}(E) \simeq \sus E.
$$
\end{cor}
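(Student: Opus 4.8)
The plan is to realize $\sus{E}$ directly as the $\AA^1$-localization of $E$, by verifying the two properties that characterize $L_{\AA^1}$ in Paragraph \ref{num:A1-local}: that the canonical map $E\to\sus{E}$ is a weak $\AA^1$-equivalence, and that $\sus{E}$ is $\AA^1$-local. The map itself is the one induced by the augmentation $\Delta^*\to\spec k$ of the standard cosimplicial scheme, i.e.\ by $\tilde R_t(\Delta^*)\to\tilde R_t(\spec k)$, via the identification $E=\uHom(\tilde R_t(\spec k),E)$. Before anything else I would record that $\tilde R_t(\Delta^*)$ is a bounded above complex of representable $\mathrm{MW}$-$t$-sheaves, hence cofibrant (Paragraph \ref{num:Wmodel}); consequently $\sus{}=\uHom(\tilde R_t(\Delta^*),-)$ descends to a triangulated endofunctor of $\Der(\sht)$ that commutes with small sums and is computed by the explicit total complex of the footnote, exactly as in the classical setting of \cite{CD3}.

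For the $\AA^1$-locality of $\sus{E}$ I would invoke Corollary \ref{cor:A1-local_complexes}: it is enough that the Nisnevich homology sheaves of $\sus{E}$ be $\AA^1$-invariant. Since sheafification is exact, these are the sheaves associated with the homology presheaves $X\mapsto\H_n\big(\Tot\,E(\Delta^*\times X)\big)$, so it suffices to show that the latter presheaves are $\AA^1$-invariant. This is the usual homotopy invariance of the singular complex, proved by exhibiting a cosimplicial $\AA^1$-homotopy between the two inclusions $\Delta^*\rightrightarrows\Delta^*\times\AA^1$ (see \cite{Mazza06} or \cite{CD3}); Theorem \ref{thm:Wsh&A1} then even yields that the associated sheaves are strictly $\AA^1$-invariant, which is more than needed here.

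For the weak $\AA^1$-equivalence $E\to\sus{E}$, equivalently $\mathrm{cone}(E\to\sus{E})\in\mathcal T_{\AA^1}$, I would use a d\'evissage: since $\sus{}$ is triangulated and commutes with small sums, the complexes $E$ for which $\mathrm{cone}(E\to\sus{E})\in\mathcal T_{\AA^1}$ form a localizing triangulated subcategory of $\Der(\sht)$, so it is enough to treat the representable case $E=\tilde R_t(X)$ with $X$ smooth. There the map is $\tilde R_t(X)\to\tilde R_t(\Delta^*\times X)$ induced by the vertex $\{0\}\hookrightarrow\Delta^*$; an induction on $n$ using the very generators of $\mathcal T_{\AA^1}$ shows each projection $\tilde R_t(\Delta^n\times X)=\tilde R_t(\AA^n_X)\to\tilde R_t(X)$ to be a $\mathcal T_{\AA^1}$-equivalence, so the map from the constant simplicial sheaf $\tilde R_t(X)$ to the simplicial sheaf $\tilde R_t(\Delta^*\times X)$ is a termwise $\AA^1$-equivalence, and passage to total complexes preserves this (\cite{CD3}). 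Combining the two points, $E\to\sus{E}$ acquires the universal property characterizing $E\to L_{\AA^1}(E)$ in Paragraph \ref{num:A1-local}, whence the canonical quasi-isomorphism $L_{\AA^1}(E)\simeq\sus{E}$.

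I expect the main obstacle to be the homotopical bookkeeping around $\sus{}$ rather than any new idea: one must check carefully that the naive total complex genuinely computes the derived internal Hom against $\tilde R_t(\Delta^*)$ (so that $\sus{}$ preserves quasi-isomorphisms, is triangulated, and commutes with small sums), and one must set up cleanly the cosimplicial $\AA^1$-homotopy underlying the homotopy invariance of the singular complex. Both are classical and can be imported from \cite{CD3} and \cite{Mazza06}; note that it is precisely through Corollary \ref{cor:A1-local_complexes} that the input of Section \ref{sec:framed} (Theorem \ref{thm:Wsh&A1}) enters the argument.
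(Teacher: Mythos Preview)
Your proposal is correct and follows essentially the same approach as the paper: show that $\sus E$ is $\AA^1$-local via Corollary \ref{cor:A1-local_complexes}, show that the canonical map $c:E\to\sus E$ is a weak $\AA^1$-equivalence using $\Delta^n\simeq\AA^n_k$, and conclude. The paper's proof is terse (it calls both steps ``clear'' and ``easily checked'' and packages the conclusion by applying $L_{\AA^1}$ to $c$), whereas you spell out the cosimplicial-homotopy argument for step one and a d\'evissage for step two; but the underlying structure is identical.
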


\begin{proof}
Indeed, according to Corollary \ref{cor:A1-local_complexes}, it is clear
 that $\sus E$ is $\AA^1$-local. Thus the canonical map
$$
c:E \rightarrow \sus E
$$
induces a morphism of complexes:
$$
L_{\AA^1}(c):L_{\AA^1}(E) \rightarrow L_{\AA^1}(\sus E) = \sus E.
$$
As $\Delta^n \simeq \AA^n_k$, one checks easily that
 the map $c$ is an $\AA^1$-weak equivalence. Therefore
 the map $L_{\AA^1}(c)$ is an $\AA^1$-weak equivalence
 of $\AA^1$-local complexes, thus a quasi-isomorphism.
\end{proof}

\begin{num}
As usual, one defines the Tate object in $\DMte$ by
 the formula
 \[
\tilde R(1):=\mot(\PP^1_k)/\mot(\{\infty\})[-2]
 \simeq \mot(\AA^1_k)/\mot(\AA^1_k-\{0\})[-2]
 \simeq \mot(\GG)/\mot(\{1\})[-1].
 \]
 Then one defines the effective $\mathrm{MW}$-motivic cohomology of a smooth
 scheme $X$ in degree $(n,i) \in \ZZ \times \NN$:\footnote{Negative
 twists will be introduced in the next section.} as
$$
\H^{n,i}_{\mathrm{MW}}(X,R)=\Hom_{\DMte}(\tilde R(X),\tilde R(i)[n])
$$
where $\tilde R(i)=\tilde R(1)^{\otimes i}$.
\end{num}

\begin{cor}\label{cor:W-motivic&generalized}
Assume $k$ is an infinite perfect field.
The effective $\mathrm{MW}$-motivic cohomology defined above coincides with 
 the generalized motivic cohomology groups
 defined (for $R=\ZZ$) in \cite[5.31]{Calmes14b}.
\end{cor}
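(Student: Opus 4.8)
The plan is to show that both sides compute the Nisnevich hypercohomology of one and the same complex of $\mathrm{MW}$-sheaves, namely the Suslin complex of the reduced representable sheaf attached to the $i$-th smash power of $\GG$, shifted by $[-i]$. So I would first recall the definition of \cite[5.31]{Calmes14b}: for $R=\ZZ$ the generalized motivic complex is $\tilde\ZZ(i):=\sus{\tilde R_t(\GG^{\wedge i})}[-i]$, where $\tilde R_t(\GG^{\wedge i})$ denotes the direct summand of $\tilde R_t(\GG^{\times i})$ cut out by the unit sections of the $i$ factors, and the generalized motivic cohomology of $X$ is $\mathbb{H}^n(X,\tilde\ZZ(i))$. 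If \emph{loc. cit.} uses the Zariski topology I would invoke Corollary \ref{cor:cohomcomplex} --- applicable since this complex is $\AA^1$-local --- to pass to the Nisnevich topology, the $\mathrm{MW}$-transfers being irrelevant to the hypercohomology and simply forgotten via $\tilde\gamma_*$.

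Next I would identify the Tate twist $\tilde R(i)=\tilde R(1)^{\otimes i}$ in $\Der(\sht)$ with this sheaf. Since the unit section $1\colon\spec k\to\GG$ splits $\tilde R_t(\{1\})\to\tilde R_t(\GG)$, the cone $\mot(\GG)/\mot(\{1\})$ is quasi-isomorphic to the reduced summand $\tilde R_t(\GG^{\wedge 1})$ in degree $0$, whence $\tilde R(1)\simeq\tilde R_t(\GG^{\wedge 1})[-1]$. A complex concentrated in one degree whose component is a retract of some $\tilde R_t(X)$ is cofibrant (Paragraph \ref{num:Wmodel}); combining this with $\tilde R_t(X)\otr\tilde R_t(Y)=\tilde R_t(X\times Y)$ (formula \eqref{eq:sht_monoidal}), the fact that $\otr$ commutes with direct sums (Paragraph \ref{num:sht_monoidal}), and weak flatness of the tensor structure, the $i$-fold derived tensor power is computed naively and yields $\tilde R(i)\simeq\tilde R_t(\GG^{\wedge i})[-i]$.

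Then Corollary \ref{cor:LA1}, together with the fact that the Suslin complex commutes with shifts, gives $L_{\AA^1}(\tilde R(i))\simeq\sus{\tilde R_t(\GG^{\wedge i})}[-i]=\tilde\ZZ(i)$. Realizing $\DMte$ as the full subcategory of $\AA^1$-local objects of $\Der(\sht)$ and using that $L_{\AA^1}(\tilde R(i))$ is $\AA^1$-local (Paragraph \ref{num:A1-local}), together with Corollary \ref{cor:compare_Hom&cohomology}, I would conclude
\[
\H^{n,i}_{\mathrm{MW}}(X,\ZZ)=\Hom_{\DMte}(\mot(X),\tilde R(i)[n])=\Hom_{\Der(\sht)}(\tilde R_t(X),L_{\AA^1}(\tilde R(i))[n])=\mathbb{H}^n_\nis(X,\tilde\ZZ(i)),
\]
and the right-hand side is exactly the generalized motivic cohomology of \cite[5.31]{Calmes14b}.

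The step I expect to be the main obstacle is the first one: pinning down the conventions of \cite[5.31]{Calmes14b} so that $\tilde\ZZ(i)$ is literally $\sus{\tilde R_t(\GG^{\wedge i})}[-i]$ --- the precise choice of reduced summand, the placement of the shift $[-i]$, and the topology used for the hypercohomology. Once these normalizations are matched, the remainder is the formal dictionary, set up above, between maps in the localized category, $\AA^1$-localization via the Suslin complex, and hypercohomology.
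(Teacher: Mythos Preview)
Your proposal is correct and follows essentially the same route as the paper's proof: both use Corollary~\ref{cor:LA1} to identify $L_{\AA^1}(\tilde R(i))$ with the Suslin complex, and Corollary~\ref{cor:cohomcomplex} to pass between Zariski and Nisnevich hypercohomology of this $\AA^1$-local complex. The paper is simply terser, packaging your final chain of identifications (via Corollary~\ref{cor:compare_Hom&cohomology} and the realization of $\DMte$ as $\AA^1$-local objects) into a citation of \cite[Corollary~4.0.4]{Fasel16}, whereas you spell this out; your worry about matching the conventions of \cite[5.31]{Calmes14b} is exactly the content absorbed by that external reference.
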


\begin{proof}
By Corollary \ref{cor:LA1}, the Suslin complex of $R(i)$ is $\AA^1$-local. It follows then from Corollary \ref{cor:cohomcomplex} that its Nisnevich hypercohomology and its Zariski hypercohomology coincide.  We conclude using \cite[Corollary 4.0.4]{Fasel16}.
\end{proof}

We now spend a few lines in order to compare ordinary motivic cohomology with the version of Definition \ref{def:generalizedMW}, following \cite[Definition 6.7]{Calmes14b}. In this part, we suppose that $R$ is flat over $\ZZ$. If $X$ is a smooth scheme, recall from \cite[Definition 5.15]{Calmes14b} that the presheaf with $\mathrm{MW}$-transfers $\Icorr(X)$ defined by 
\[
\Icorr(X)(Y)=\ilim_T\H^d_T\!\big(X \times Y,\mathrm{I}^{d+1},\omega_{Y}\big)
\]
fits in an exact sequence
\[
0\to \Icorr(X)\to \corr(X)\to \ZZ^{\mathrm{tr}}(X)\to 0
\]
As $\corr(X)$ is a sheaf in the Zariski topology, it follows that $\Icorr(X)$ is also such a sheaf. We can also consider the Zariski sheaf $\Icorr_R(X)$ defined by 
\[
\Icorr_R(X)(Y)=\Icorr(X)(Y)\otimes R.
\]

\begin{df}
We denote by $\IrepR X$ the $t$-sheaf associated to the presheaf $\Icorr_R(X)$.
\end{df}

In view of Proposition \ref{prop:exist_associated-W-t-sheaf}, $\Icorr_R(X)$ has $\mathrm{MW}$-transfers. Moreover, sheafification being exact and $R$ being flat, we have an exact sequence
\[
0\to \IrepR X\to \repR X\to R^{\mathrm{tr}}(X)\to 0
\]
of $\mathrm{MW}$-$t$-sheaves (note the slight abuse of notation when we write $R^{\mathrm{tr}}(X)$ in place of $\pi_*^tR^{\mathrm{tr}}(X))$. We deduce from \cite[Lemma 2.13]{Mazza06} an exact sequence 
\[
0\to \mathrm{I}\tilde{R}_t\{q\}\to \tilde{R}_t\{q\}\to R^{\mathrm{tr}}_t\{q\}\to 0
\]
for any $q\in\NN$, where $\mathrm{I}\tilde{R}_t\{m\}=\IrepR {\GG^{\wedge m}}$.

\begin{df}
For any $q\in \NN$, we set $\mathrm{I}\tilde{R}_t(q)=\mathrm{I}\tilde{R}_t\{q\}[-q]$ in $\DMte$ and 
\[
\H_{\mathrm{I}}^{p,q}(X,R)=\Hom_{\DMte}(\tilde M(X),\mathrm{I}\tilde{R}_t(q)[p])
\]
for any smooth scheme $X$.
\end{df}

As Zariski cohomology and Nisnevich cohomology coincide by Corollary \ref{cor:cohomcomplex}, these groups coincide with the ones defined in \cite[Definition 6.7]{Calmes14b} (when $R=\ZZ$). In particular, we have a long exact sequence 
\[
\ldots\to \H_{\mathrm{I}}^{p,q}(X,R)\to \H_{\mathrm{MW}}^{p,q}(X,R)\to \H^{p,q}(X,R) \to \H_{\mathrm{I}}^{p+1,q}(X,R)\to \ldots
\]
for any smooth scheme $X$ and any $q\in \NN$.

\num To end this section,
 we now discuss the effective geometric $\mathrm{MW}$-motives,
 which are built as in the classical case.

\begin{df}
One defines the category $\DMtegm$ of \emph{geometric effective motives}
 over the field $k$ as the pseudo-abelianization
 of the Verdier localization of
 the homotopy category $\mathrm K^b(\smc)$ associated
 to the additive category $\smc$ with respect to the thick
 triangulated subcategory containing complexes of the form:
 
\begin{enumerate}
\item $\hdots \rightarrow [W] \xrightarrow{k_*-g_*} [V] \oplus [U]
 \xrightarrow{f_*+j_*} [X]
 \rightarrow \hdots$
 for an elementary Nisnevich distinguished square of smooth schemes:
$$
\xymatrix@=10pt{
W\ar_-g[d]\ar^-k[r] & V\ar^-f[d] \\
U\ar_-j[r] & X;
}
$$
\item $\hdots  \rightarrow [\AA^1_X] \xrightarrow{p_*} [X]  \rightarrow \hdots$
 where $p$ is the canonical projection and $X$ is a smooth scheme.
\end{enumerate}
\end{df}

It is clear that the natural map $\mathrm K^b(\smc) \rightarrow \Der(\shtZZ)$
 induces a canonical functor:
$$
\iota:\DMtegm \rightarrow \DMteZ.
$$
As a consequence of \cite[Theorem 6.2]{CD1} (see also Example 6.3 of \emph{op. cit.})
 and Theorem \ref{thm:Wsh&A1}, we get the following result.
\begin{prop}\label{prop:gm_objects}
The functor $\iota$ is fully faithful and its essential
 image coincides with each one of the following subcategories:
\begin{itemize}
\item the subcategory of compact objects of $\DMteZ$;
\item the smallest thick triangulated subcategory of $\DMteZ$
 which contains the motives $\mot(X)$ for a smooth scheme $X$.
\end{itemize}
Moreover, when $k$ is an infinite perfect field,
 one can reduce in point (1) in the definition of $\DMtegm$ to consider
 those complexes associated to a Zariski open cover $U \cup V$ of 
 a smooth scheme $X$.
\end{prop}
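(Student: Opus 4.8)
The plan is to deduce the whole statement from the general comparison theorem \cite[Theorem 6.2]{CD1} (see also Example 6.3 there), whose hypotheses are all at hand, the single non-formal input being Theorem \ref{thm:Wsh&A1}. Recall from Corollary \ref{cor:model_Der} that $\shtZZ$ is a Grothendieck abelian category carrying the descent structure $(\mathcal G,\mathcal H)$, with $\mathcal G$ the set of the $\mot(X)$ for $X$ a smooth scheme; these generators are compact in $\Der(\shtZZ)$, and $\DMteZ$ is obtained from $\Der(\shtZZ)$ by Bousfield localization at the set of $\AA^1$-complexes $\mot(\AA^1_X)\to\mot(X)$ (Paragraph \ref{num:A1-local}, Proposition \ref{prop:model_DMt}). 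The additive functor $\smc\to\Der(\shtZZ)$, $X\mapsto\mot(X)$, extends to $\mathrm K^b(\smc)\to\Der(\shtZZ)$ and, after composition with $L_{\AA^1}$, annihilates the two families of complexes defining $\DMtegm$ --- the Nisnevich-square complexes because Nisnevich descent already holds in $\Der(\shtZZ)$ (Theorem \ref{thm:decent_Wsheaves} and Corollary \ref{cor:model_Der}), the $\AA^1$-complexes by construction of $\DMteZ$ --- hence it factors as $\iota\colon\DMtegm\to\DMteZ$. The only point that is not formal is the hypothesis of \cite[Theorem 6.2]{CD1} requiring that $L_{\AA^1}$ restricted to the heart produce strictly $\AA^1$-invariant sheaves, equivalently that $L_{\AA^1}$ be computed by the Suslin complex (Corollary \ref{cor:LA1}); this is exactly Theorem \ref{thm:Wsh&A1}, where the hypothesis that $k$ is infinite perfect enters. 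Granting it, \cite[Theorem 6.2]{CD1} yields at once that $\iota$ is fully faithful with essential image the subcategory of compact objects of $\DMteZ$.

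The identification of this image with the smallest thick triangulated subcategory containing the $\mot(X)$ is then formal: $\DMteZ$ is compactly generated by the set $\{\mot(X)\}$ --- being a Bousfield localization of the compactly generated category $\Der(\shtZZ)$, and $L_{\AA^1}$ carrying a generating family to a generating family --- and in a compactly generated triangulated category the compact objects coincide with the smallest thick subcategory containing any fixed set of compact generators (Neeman). For the last assertion, write $\mathcal T_\nis$ (resp.\ $\mathcal T_\zar$) for the thick triangulated subcategory of $\mathrm K^b(\smc)$ generated by the $\AA^1$-complexes together with the Nisnevich-distinguished-square complexes (resp.\ the complexes attached to Zariski open covers $X=U\cup V$). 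Since every such Zariski cover underlies an elementary Nisnevich square, $\mathcal T_\zar\subseteq\mathcal T_\nis$, and only the reverse inclusion is at issue. One reruns the argument of the first paragraph with the Nisnevich topology replaced throughout by the Zariski topology; this is legitimate because, by Theorem \ref{thm:Wsh&A1} and Corollary \ref{cor:cohomcomplex}, a strictly $\AA^1$-invariant $\mathrm{MW}$-sheaf for the Nisnevich topology is already a Zariski sheaf with the same cohomology, so the Zariski and Nisnevich $\AA^1$-local model structures have the same homotopy category $\DMteZ$ (this is \cite[Example 6.3]{CD1}). One thus gets that the pseudo-abelianization of $\mathrm K^b(\smc)/\mathcal T_\zar$ again embeds fully faithfully into $\DMteZ$. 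Finally, each Nisnevich-square complex $c$ maps to an acyclic --- hence zero --- object of $\DMteZ$ (Nisnevich descent holds in $\DMteZ$), so by full faithfulness $c$ vanishes in the pseudo-abelianization of $\mathrm K^b(\smc)/\mathcal T_\zar$, and as the idempotent-completion functor is fully faithful, $c$ already vanishes in $\mathrm K^b(\smc)/\mathcal T_\zar$, i.e.\ $c\in\mathcal T_\zar$. Hence $\mathcal T_\nis=\mathcal T_\zar$ and $\DMtegm$ is unchanged if only Zariski covers are used in point (1).

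The only substantive ingredient is Theorem \ref{thm:Wsh&A1}, which has already been established; given it, the first two clauses are a direct application of \cite[Theorem 6.2]{CD1} together with Neeman's theorem, and the sole place requiring care is the Zariski/Nisnevich bookkeeping at the end, which rests on Corollary \ref{cor:cohomcomplex}. The main obstacle, had Theorem \ref{thm:Wsh&A1} not been available, would have been precisely that strict $\AA^1$-invariance statement.
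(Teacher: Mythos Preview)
Your proposal is correct and follows exactly the approach indicated by the paper: the paper does not give an explicit proof but merely states the proposition as a consequence of \cite[Theorem 6.2]{CD1} (with Example 6.3 there) together with Theorem \ref{thm:Wsh&A1}, and your write-up is a faithful unpacking of how those references apply, including the Zariski reduction via Corollary \ref{cor:cohomcomplex}.
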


\begin{rem}\label{rem:compact_gen_DMte}
Note that the previous proposition states in particular that the objects
 of the form $\mot(X)$ for a smooth scheme $X$ are compact in $\DMte$.
 Therefore, they form a family of compact generators of $\DMte$ in the
 sense that $\DMte$ is equal to its smallest triangulated category
 containing $\mot(X)$ for a smooth scheme $X$ and stable under
 coproducts.
\end{rem}

\subsection{The stable $\AA^1$-derived category}

\begin{num}
As usual in motivic homotopy theory, we now describe 
 the $\PP^1$-stabilization of the category of 
 $\mathrm{MW}$-motivic complexes for the topology $t$ (again, $t=\nis, \et$).

Recall that the Tate twist in $\DMtex t$ is defined by one of the following
 equivalent formulas:
$$
\tilde R(1):=\mot(\PP^1_k)/\mot(\{\infty\})[-2]
 \simeq \mot(\AA^1_k)/\mot(\AA^1_k-\{0\})[-2]
 \simeq \mot(\GG)/\mot(\{1\})[-1].
$$
In the construction of the $\PP^1$-stable category as well as
 in the study of the homotopy $t$-structure, it is useful to
 introduce a redundant notation of $\GG$-twist:
$$
\tilde R\{1\}:=\mot(\GG)/\mot(\{1\})
$$
so that $\tilde R\{1\}=\tilde R(1)[1]$. The advantage of this
 definition is that we can consider $\tilde R\{1\}$
 as a $\mathrm{MW}$-$t$-sheaf instead of a complex. For $m\geq 1$, we set $\tilde R\{m\}=\tilde R\{1\}^{\otimes m}$ and we observe that $\tilde R(m)=\tilde R\{m\}[-m]$. 

Let us recall the general process of $\otimes$-inversion of the
 Tate twist in the context of model categories,
 as described in our particular case in \cite[\textsection 7]{CD1}.
We define the category $\spt$ of (abelian) Tate $\mathrm{MW}$-$t$-spectra as the additive
 category whose object are couples $(\mathcal F_*,\epsilon_*)$ where $\mathcal F_*$
 is a sequence of $\mathrm{MW}$-$t$-sheaves such that $\mathcal F_n$ is equipped with
 an action of the symmetric group $\mathfrak S_n$
 and, for any integer $n \geq 0$,
$$
\epsilon_n:(\mathcal F_n\{1\}:=\mathcal F_n \otimes \tilde R\{1\})
 \rightarrow \mathcal F_{n+1}
$$
is a morphism of $\mathrm{MW}$-$t$-sheaves, called the \emph{suspension map},
 such that the iterated morphism
$$
\mathcal F_n\{m\} \rightarrow \mathcal F_{n+m}
$$
is $\mathfrak S_n \times \mathfrak S_m$-equivariant for any $n\geq 0$ and $m\geq 1$.
 (see \emph{loc. cit.} for more details). The morphisms in $\spt$ between couples $(\mathcal F_*,\epsilon_*)$ and $(\mathcal G_*,\tau_*)$ are sequences of $\mathfrak S_n$-equivariant morphisms $f_n:\mathcal F_n\to \mathcal G_n$ such that the following diagram of $\mathfrak S_n \times \mathfrak S_m$-equivariant maps
 \[
 \xymatrix{\mathcal F_n\{m\} \ar[r]\ar[d]_-{f_n\{m\}} & \mathcal F_{n+m}\ar[d]^-{f_{n+m}} \\
 \mathcal G_n\{m\} \ar[r] & \mathcal G_{n+m}}
 \]
 is commutative for any $n\geq 0$ and $m\geq 1$.

This is a Grothendieck abelian, closed symmetric monoidal category with tensor product described in \cite[\S 7.3, \S 7.4]{CD1} (together with \cite[Chapter VII, \S 4, Exercise 6]{MacLane98}).
Further, we have a canonical adjunction of abelian categories:
\begin{equation}\label{eq:suspension}
\Sigma^\infty:\sht \leftrightarrows \spt:\Omega^\infty
\end{equation}
such that $\Sigma^\infty(\mathcal F)=(\mathcal F\{n\})_{n \geq 0}$
 with the obvious suspension maps
 and $\Omega^\infty(\mathcal F_*,\epsilon_*)=\mathcal F_0$.
 Recall the Tate $\mathrm{MW}$-$t$-spectrum $\Sigma^\infty(\mathcal F)$
 is called the \emph{infinite spectrum} associated with $\mathcal F$.
 The functor $\Sigma^\infty$ is monoidal (cf. \cite[\S 7.8]{CD1}).

One can define the $\AA^1$-stable cohomology of a complex
 $\mathbb E=(\mathbb E_*,\sigma_*)$ 
 of Tate $\mathrm{MW}$-$t$-spectra, for any smooth scheme $X$ and any
 couple $(n,m) \in \ZZ^2$:
\begin{equation}\label{eq:stable_A1_cohomology}
\H^{n,m}_{st-\AA^1}(X,\mathbb E)
 :=\ilim_{r \geq \max(0,-m)}
 \Big(\Hom_{\DMte}(\mot(X)\{r\},\mathbb E_{m+r}[n])\Big)
\end{equation}
where the transition maps are induced by the suspension maps $\sigma_*$ and $\mot(X)\{r\}=\mot(X)\otimes \tilde R\{r\}$. 
\end{num}

\begin{df}
We say that a morphism $\varphi:\mathbb E \rightarrow \mathbb F$ of complexes of Tate
 $\mathrm{MW}$-$t$-spectra is a \emph{stable $\AA^1$-equivalence}
 if for any smooth scheme $X$
 and any couple $(n,m) \in \ZZ^2$, the induced map
$$
\varphi_*:\H^{n,m}_{st-\AA^1}(X,\mathbb E)
 \rightarrow \H^{n,m}_{st-\AA^1}(X,\mathbb F)
$$
is an isomorphism.

One defines the category $\DMtx t$
 of \emph{$\mathrm{MW}$-motivic spectra} for the topology $t$
 as the localization of the triangulated category
 $\Der(\spt)$ with respect to stable $\AA^1$-derived equivalences.
\end{df}

\begin{num}\label{num:twist-1}
As usual, we can describe the above category as the homotopy
 category of a suitable model category.

First, recall that we can define the negative twist of an abelian Tate
 $\mathrm{MW}$-$t$-spectrum $\mathcal F_*$ by the formula, for $n>0$:
\begin{equation}\label{eq:twist-1}
\begin{split}
\lbrack(\mathcal F_*)\{-n\}\rbrack_m=
\begin{cases}
\ZZ[\mathfrak S_m] \otimes_{\ZZ[\mathfrak S_{m-n}]} \mathcal F_{m-n}
 & \text{if } m \geq n. \\
0 & \text{otherwise.}
\end{cases}
\end{split}
\end{equation}
with the obvious suspension maps. Note for future references that
 one has according to this definition and that of the tensor product:
\begin{equation}\label{eq:twist-2}
\mathcal F_*\{-n\} =\mathcal F_* \otimes (\Sigma^{\infty}\tilde R)\{-n\}.
\end{equation}

We then define the class of cofibrations in $\Comp(\spt)$
 as the smallest class of morphisms of complexes closed under
 suspensions, negative twists, pushouts,
 transfinite compositions and retracts generated by
 the infinite suspensions of cofibrations in $\Comp(\sht)$.

Applying \cite[Prop. 7.13]{CD1}, we get:
\end{num}
\begin{prop}
The category $\Comp(\spt)$ of complexes of Tate $\mathrm{MW}$-$t$-spectra
 has a symmetric monoidal model structure
 with stable $\AA^1$-equivalences as weak equivalences and
 cofibrations as defined above.
 This model structure is proper and cellular.

Moreover, the fibrations for this model structure
 are epimorphisms of complexes whose kernel
 is a complex $\mathbb E$ such that:
\begin{itemize}
\item for any $n \geq 0$, $\mathbb E_n$ is a $t$-flasque and $\AA^1$-local
 complex;
\item for any $n \geq 0$, the map obtained by adjunction from the
 obvious suspension map:
$$
\mathbb E_{n+1} \rightarrow \derR \uHom(\tilde R\{1\},\mathbb E_n)
$$
is an isomorphism.
\end{itemize}
\end{prop}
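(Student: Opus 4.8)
The proposition is an instance of a general model-structure construction, so the plan is to verify that the hypotheses of the cited result \cite[Prop. 7.13]{CD1} are satisfied and then read off the explicit description of fibrations from that theorem. First I would recall the setup: we already have, by Corollary \ref{cor:model_Der} and Proposition \ref{prop:model_DMt}, the $\AA^1$-local symmetric monoidal model structure on $\Comp(\sht)$, which is proper and cellular, together with the Grothendieck abelian symmetric monoidal category $\spt$ of Tate $\mathrm{MW}$-$t$-spectra and the suspension--loop adjunction \eqref{eq:suspension}. The category $\spt$ is exactly the category of modules (in the symmetric sense) over the commutative monoid $\Sigma^\infty\tilde R\{1\}^{\otimes\bullet}$, or more precisely the category of symmetric $\tilde R\{1\}$-spectra in $\sht$; this is precisely the input of the $\otimes$-inversion procedure of \cite[\textsection 7]{CD1}.

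The key steps, in order, are as follows. Step one: check that the cofibrantly generated model structure on $\Comp(\sht)$ is suitable for $\otimes$-inversion, i.e. that the Tate object $\tilde R\{1\}$ is cofibrant (it is, being of the form $\repR{\GG}/\repR{\{1\}}$, a cokernel of a split monomorphism between representable sheaves, hence cofibrant by the remark after Paragraph \ref{num:Wmodel}) and that $-\otimes\tilde R\{1\}$ is a left Quillen endofunctor of the symmetric monoidal model structure. Step two: invoke \cite[Prop. 7.13]{CD1} directly to obtain the stable model structure on $\Comp(\spt)$ with the class of cofibrations generated as in Paragraph \ref{num:twist-1}; properness and cellularity are part of the conclusion of that proposition, transferred from the corresponding properties of the $\AA^1$-local model structure on $\Comp(\sht)$ already established in Proposition \ref{prop:model_DMt}. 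Step three: identify the weak equivalences. The model structure produced by \cite[Prop. 7.13]{CD1} is the Bousfield localization at the maps $\Sigma^\infty(\tilde R\{1\}\otimes C)\{-1\}\to\Sigma^\infty(C)$ (stabilization maps); one then checks that the resulting class of weak equivalences coincides with the stable $\AA^1$-equivalences of the preceding definition, by comparing the $\Omega$-spectrum fibrant replacements with the cohomology groups \eqref{eq:stable_A1_cohomology} — this is the standard argument that a map is a stable equivalence iff it induces an isomorphism on the colimit of shifted mapping groups, so it suffices to match the indexing in \eqref{eq:stable_A1_cohomology} with the bonding maps. Step four: read off the fibrant objects. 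By the general theory of stabilization model structures, the fibrant objects are the levelwise-fibrant $\Omega$-spectra: each $\mathbb E_n$ must be fibrant in the $\AA^1$-local model structure on $\Comp(\sht)$ — equivalently $t$-flasque and $\AA^1$-local by Proposition \ref{prop:model_DMt} — and the adjoint bonding map $\mathbb E_{n+1}\to\derR\uHom(\tilde R\{1\},\mathbb E_n)$ must be an isomorphism in $\Der(\sht)$; the fibrations are then the levelwise fibrations with fibrant kernel, which unwinds to the stated description since the kernel of an epimorphism of complexes is computed levelwise.

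The main obstacle is Step three: making precise that the abstract Bousfield localization of \cite[Prop. 7.13]{CD1} has exactly the stable $\AA^1$-equivalences as its weak equivalences, i.e. that the two a priori different notions of ``stable equivalence'' agree. Concretely one must show that a map $\varphi$ inducing isomorphisms on all $\H^{n,m}_{st-\AA^1}(X,-)$ is inverted by every fibrant (= $\Omega$-spectrum) object, and conversely. The forward direction uses that $\mot(X)\{r\}$, $X$ smooth, $r\ge 0$, together with shifts, form a family of compact generators of $\Der(\spt)$ (by Remark \ref{rem:compact_gen_DMte} applied levelwise and the fact that $\Sigma^\infty$ preserves compact generators), so that a map is a stable equivalence precisely when it induces isomorphisms on $\Hom_{\Der(\spt)}(\mot(X)\{r\}[n],-)$ for all such data; the latter groups are computed, for an $\Omega$-spectrum target, by the colimit formula \eqref{eq:stable_A1_cohomology}, using the adjunction \eqref{eq:suspension} level by level and Corollary \ref{cor:compare_Hom&cohomology}. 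The converse is formal once one knows fibrant replacement exists. All of this is entirely parallel to the classical case of Tate spectra of sheaves with transfers, and since we are explicitly allowed to cite \cite[Prop. 7.13]{CD1}, the argument reduces to bookkeeping: checking the hypotheses of that proposition (Step one) and unwinding its conclusion (Steps two and four), with Step three being the only place requiring a genuine, if routine, verification.
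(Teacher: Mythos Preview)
Your proposal is correct and follows the same route as the paper: the paper offers no proof beyond the sentence ``Applying \cite[Prop.~7.13]{CD1}, we get:'' immediately preceding the proposition, so the argument is by direct citation. Your Steps one, two, and four simply spell out what that citation entails, and your Step three (matching the abstract stable weak equivalences produced by the localization with the explicitly defined stable $\AA^1$-equivalences via \eqref{eq:stable_A1_cohomology}) makes explicit a verification the paper leaves entirely to the reader; this is a reasonable point to flag, and your sketch of how to handle it---compact generation plus the colimit formula for maps into $\Omega$-spectra---is the standard and correct way to close that gap.
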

Therefore, the homotopy category $\DMtx t$ is a triangulated symmetric
 monoidal category with internal Hom. The adjoint pair \eqref{eq:suspension}
 can be derived and induces an adjunction of triangulated categories:
$$
\Sigma^\infty:\DMtex t \leftrightarrows \DMtx t:\Omega^\infty.
$$
As a left derived functor of a monoidal functor, the functor
 $\Sigma^\infty$ is monoidal.
Slightly abusing notation, we still denote by $\mot(X)$ the $\mathrm{MW}$-motivic spectrum $\Sigma^{\infty}(\mot(X))$.

\begin{num}
By construction, the $\mathrm{MW}$-motivic spectrum $\tilde R\{1\}$,
 and thus $\tilde R(1)$ is $\otimes$-invertible in $\DMtx t$
 (see \cite[Prop. 7.14]{CD1}). Moreover,
 using formulas \eqref{eq:twist-1} and \eqref{eq:twist-2},
 one obtains a canonical map  in $\spt$:
$$
\phi:\Sigma^\infty \tilde R\{1\} \otimes \big((\Sigma^\infty \tilde R)\{-1\}\big)
 \rightarrow \big(\Sigma^\infty \tilde R\{1\}\big)\{-1\}
 \rightarrow \tilde R.
$$
The following proposition justifies the definition of Paragraph
 \ref{num:twist-1} of negative twists.
\end{num}
\begin{prop}
The map $\phi$ is a stable $\AA^1$-equivalence.
 The $\mathrm{MW}$-motive $(\Sigma^\infty \tilde R)\{-1\}$
 is the tensor inverse of $\tilde R\{1\}$.
 For any $\mathrm{MW}$-$t$-spectra $\mathbb E$,
 the map obtained by adjunction from $\mathbb E \otimes \phi$:
$$
\mathbb E\{-1\} \rightarrow \derR \uHom (\tilde R\{1\},\mathbb E)
$$
is a stable $\AA^1$-equivalence.
\end{prop}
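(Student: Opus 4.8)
The plan is to obtain all three assertions from the general $\otimes$-inversion formalism of \cite[\S 7]{CD1}, the essential input being that $\tilde R\{1\}$ --- equivalently $\tilde R(1)=\tilde R\{1\}[-1]$ --- is already $\otimes$-invertible in $\DMtx t$ by \cite[Prop.~7.14]{CD1}; that proposition applies here because the descent structure on $\Comp(\sht)$ together with its (weakly flat) monoidal structure satisfy its hypotheses, exactly as used to construct the model structure on $\Comp(\spt)$ above.

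First I would reduce the last two assertions to the first. If $\phi$ is a stable $\AA^1$-equivalence, then in the monoidal triangulated category $\DMtx t$ it induces an isomorphism $\Sigma^\infty\tilde R\{1\}\otimes\big((\Sigma^\infty\tilde R)\{-1\}\big)\xrightarrow{\ \sim\ }\tilde R$ onto the unit object; since a $\otimes$-invertible object has, up to canonical isomorphism, a unique tensor inverse, and $\tilde R\{1\}$ is invertible by \cite[Prop.~7.14]{CD1}, this forces $(\Sigma^\infty\tilde R)\{-1\}$ to be that tensor inverse, which is the second assertion. For the third, by the tensor--hom adjunction the map $\mathbb E\{-1\}\to\derR\uHom(\tilde R\{1\},\mathbb E)$ is adjoint to the composite $\mathbb E\{-1\}\otimes\Sigma^\infty\tilde R\{1\}=\mathbb E\otimes\big((\Sigma^\infty\tilde R)\{-1\}\big)\otimes\Sigma^\infty\tilde R\{1\}\xrightarrow{\ \mathrm{id}_{\mathbb E}\otimes\phi\ }\mathbb E$, using the identification \eqref{eq:twist-2} and the symmetry constraint. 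As the $\AA^1$-stable model structure on $\Comp(\spt)$ is symmetric monoidal, $\mathrm{id}_{\mathbb E}\otimes\phi$ is again a stable $\AA^1$-equivalence (after cofibrant replacement, by the monoid/pushout--product axiom), and since $\tilde R\{1\}$ is invertible the adjunction $(-\otimes\tilde R\{1\},\derR\uHom(\tilde R\{1\},-))$ is an equivalence of triangulated categories; hence the adjoint of a stable $\AA^1$-equivalence is again one.

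It remains to show that $\phi$ is a stable $\AA^1$-equivalence. I would factor $\phi=\phi_2\circ\phi_1$, where $\phi_1:\Sigma^\infty\tilde R\{1\}\otimes(\Sigma^\infty\tilde R)\{-1\}\to(\Sigma^\infty\tilde R\{1\})\{-1\}$ is the canonical isomorphism of \eqref{eq:twist-2} (so it may be discarded), and $\phi_2:(\Sigma^\infty\tilde R\{1\})\{-1\}\to\tilde R$ is, by \eqref{eq:twist-1}, in degree $m\ge 1$ the natural $\mathfrak S_m$-equivariant map $\ZZ[\mathfrak S_m]\otimes_{\ZZ[\mathfrak S_{m-1}]}\tilde R\{m\}\to\tilde R\{m\}$, $g\otimes x\mapsto gx$, and the zero map in degree $0$. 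To see $\phi_2$ is a stable $\AA^1$-equivalence I would compute $\H^{n,m}_{st-\AA^1}(X,-)$ for a smooth scheme $X$ and $(n,m)\in\ZZ^2$ via \eqref{eq:stable_A1_cohomology}: one compares the colimits $\ilim_r\Hom_{\DMte}(\mot(X)\{r\},(-)_{m+r}[n])$ for source and target, and the $\mathfrak S_r\times\mathfrak S_{\bullet}$-equivariance imposed on the suspension maps makes the transition maps absorb the extra free $\mathfrak S$-induction, so that the inductive system attached to $(\Sigma^\infty\tilde R\{1\})\{-1\}$ becomes cofinal in (indeed isomorphic to) the one attached to $\tilde R$; hence $\phi_2$ induces an isomorphism on $\H^{n,m}_{st-\AA^1}(X,-)$ for all $X$ and $(n,m)$.

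The hard part is exactly this last point: verifying that after stabilization the free symmetric-group induction appearing in $(\Sigma^\infty\tilde R\{1\})\{-1\}$ is ``seen through'' by the colimit. This is where the symmetric structure on Tate $\mathrm{MW}$-$t$-spectra is indispensable, and it is precisely what is treated abstractly in \cite[\S 7]{CD1} (ultimately resting on Hovey's general stabilization results); in practice I would discharge it by invoking \cite[Prop.~7.14]{CD1} and the construction preceding it, rather than redoing the symmetric-group bookkeeping by hand.
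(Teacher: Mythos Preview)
Your proposal is correct and follows essentially the same route as the paper: both deduce the second and third assertions as formal consequences of the first, and argue the first via the stable $\AA^1$-cohomology formula \eqref{eq:stable_A1_cohomology}. The only difference is one of detail---the paper's proof is a two-line sketch (``direct computation'' plus ``formal consequences''), whereas you unpack the factorization $\phi=\phi_2\circ\phi_1$, isolate the symmetric-group bookkeeping as the crux, and defer that to the general machinery of \cite[\S 7]{CD1}; this is a reasonable way to discharge what the paper leaves implicit.
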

\begin{proof}
The first assertion follows from a direct computation using
 the definition of stable $\AA^1$-equivalences via the 
 $\AA^1$-stable cohomology \eqref{eq:stable_A1_cohomology}.
 The other assertions are formal consequences of the first one.
\end{proof}

As in the effective case, we derive from the functors of
 Corollary \ref{cor:adjunctions_corr}
 Quillen adjunctions for the stable $\AA^1$-local model
 structures and consequently adjunctions of triangulated categories:
\begin{equation}\label{eq:chg_top&tr_DM}
\begin{split}
\xymatrix@C=30pt@R=24pt{
\DA\ar@<+2pt>^{\derL \tilde \gamma^*}[r]\ar@<+2pt>^{a}[d]
 & \DMt\ar@<+2pt>^{\derL \pi^*}[r]\ar@<+2pt>^{\tilde a}[d]
     \ar@<+2pt>^{\tilde \gamma_{*}}[l]
 & \DM\ar@<+2pt>^{a^{tr}}[d]
     \ar@<+2pt>^{\pi_{*}}[l] \\
\DAx{\et}\ar@<+2pt>^{\derL \tilde \gamma^*_\et}[r]
    \ar@<+2pt>^{\derR \mathcal O}[u]
 & \DMtx{\et}\ar@<+2pt>^{\derL \pi^*_\et}[r]
	  \ar@<+2pt>^{\derR \mathcal O}[u]
		\ar@<+2pt>^{\tilde \gamma_{\et*}}[l]
 & \DMx{\et}
    \ar@<+2pt>^{\derR \mathcal O}[u]
		\ar@<+2pt>^{\pi_{\et*}}[l]
}
\end{split}
\end{equation}
where each left adjoints is monoidal and sends a motive of a smooth
 scheme to the corresponding motive.

Formally, one can compute morphisms  of $\mathrm{MW}$-motivic spectra
 as follows.
\begin{prop}
For any smooth scheme $X$, any pair of integers $(n,m) \in \ZZ^2$
 and any $\mathrm{MW}$-motivic spectrum $\mathbb E$, one has a canonical
 functorial isomorphism:
\begin{align*}
\Hom_{\DMt}(\mot(X),\mathbb E(n)[m])
 &\simeq \H^{n,m}_{st-\AA^1}(X,\mathbb E) \\
 &=\ilim_{r \geq \max(0,-m)}
 \Big(\Hom_{\DMte}(\mot(X)\{r\},\mathbb E_{m+r}[n])\Big)
\end{align*}
\end{prop}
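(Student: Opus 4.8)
The plan is to reduce the statement, by a stably fibrant replacement, to the case where the complex of Tate $\mathrm{MW}$-$t$-spectra is an $\Omega$-spectrum, and then to read off both sides from the adjunction \eqref{eq:suspension} together with the explicit description of fibrant objects furnished by the previous proposition; the remaining content is the $\otimes$-inversion bookkeeping of \cite[\S 7]{CD1}. Throughout I regard $\DMt$ as the homotopy category of the stable $\AA^1$-model structure on $\Comp(\spt)$, and $\mot(X)$ as the spectrum $\Sigma^\infty\mot(X)$, which is cofibrant since $\tilde R_t(X)$ placed in degree $0$ is cofibrant in $\Comp(\sht)$ and cofibrations in $\Comp(\spt)$ are generated by infinite suspensions of such.

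First I would pick a fibrant replacement $\mathbb E\to\mathbb E^f$ in the stable $\AA^1$-model structure. Its weak equivalences being by definition the stable $\AA^1$-equivalences, this map induces an isomorphism on $\H^{n,m}_{st-\AA^1}(X,-)$ for every $X$ and $(n,m)$, and it is an isomorphism in $\DMt$, hence on $\Hom_{\DMt}\big(\mot(X),(-)(n)[m]\big)$; so we may assume $\mathbb E$ is fibrant. By the proposition on stably fibrant spectra this means: each level $\mathbb E_k$ is a $t$-flasque, $\AA^1$-local complex of $\mathrm{MW}$-$t$-sheaves, and for every $k\ge 0$ the adjoint suspension map $\mathbb E_{k+1}\to\derR\uHom(\tilde R\{1\},\mathbb E_k)$ is an isomorphism in $\Der(\sht)$.

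For such an $\mathbb E$, since $\Sigma^\infty\mot(X)$ is cofibrant and $\mathbb E$ is fibrant, $\Hom_{\DMt}(\mot(X),\mathbb E)$ is the group of chain-homotopy classes of morphisms of complexes of spectra, and the additive adjunction $\Sigma^\infty\dashv\Omega^\infty$ identifies it with homotopy classes of maps $\mot(X)\to\Omega^\infty\mathbb E=\mathbb E_0$ in $\Comp(\sht)$; as $\mathbb E_0$ is $t$-flasque and $\AA^1$-local, hence fibrant in the $\AA^1$-local model structure on $\Comp(\sht)$, this is $\Hom_{\DMte}(\mot(X),\mathbb E_0)$. To bring in the bidegree I would use that $\tilde R\{1\}$ is $\otimes$-invertible in $\DMt$ with inverse $(\Sigma^\infty\tilde R)\{-1\}$ (previous proposition) and the formulas \eqref{eq:twist-1}, \eqref{eq:twist-2}: unwinding $\tilde R(1)=\tilde R\{1\}[-1]$, so $\tilde R(i)=\tilde R\{i\}[-i]$, one rewrites $\mathbb E(n)[m]$ as a twist–shift of $\mathbb E$, moves the invertible $\tilde R\{1\}$-twist across the $\Hom$, and expresses $\Hom_{\DMt}(\mot(X),\mathbb E(n)[m])$ as $\Hom_{\DMte}(\mot(X),-)$ applied to $\Omega^\infty$ of a twist–shift of $\mathbb E$. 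Finally the $\Omega$-spectrum condition computes that $\Omega^\infty$ as the filtered colimit whose $r$-th term is, after the twist, $\derR\uHom(\tilde R\{r\},\mathbb E_{m+r})$ shifted by $[n]$; invoking Corollary \ref{cor:compare_Hom&cohomology} to rewrite $\Hom_{\DMte}(\mot(X)\{r\},\mathbb E_{m+r}[n])$ as a piece of $t$-hypercohomology and checking that the transition maps agree with those of \eqref{eq:stable_A1_cohomology} then yields exactly $\H^{n,m}_{st-\AA^1}(X,\mathbb E)$.

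The only genuinely non-formal point is this last matching of the two filtered colimits: one must keep track of the symmetric-group actions on the levels of a Tate spectrum (which ultimately drop out of the $\Hom$-groups), apply projection-formula identities in $\DMt$, where $\tilde R\{1\}$ is invertible, and not in the effective category, and know that the filtered colimit computing $\Omega^\infty$ of a fibrant spectrum is exact and commutes with $t$-hypercohomology, which is where the boundedness of $t$-cohomological dimension (already used in Corollary \ref{cor:A1-local_complexes}) re-enters. All of this is precisely the model-categorical $\otimes$-inversion machinery of \cite[\S 7]{CD1}, so in the write-up I would phrase the argument as a specialization of that machinery to the generators $\mot(X)$, with the bidegree bookkeeping spelled out from $\tilde R(1)=\tilde R\{1\}[-1]$.
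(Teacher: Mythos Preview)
Your approach is quite different from the paper's, which simply invokes \cite[4.3.61 and 4.3.79]{Ayoub} after checking two hypotheses: that the objects $\mot(X)$ form a family of compact generators (Remark~\ref{rem:compact_gen_DMte}), and that the cyclic permutation of order $3$ acts as the identity on $\tilde R\{3\}$ in $\DMte$ (Corollary~\ref{cor:permut_Tate}). Your strategy of reducing to a stably fibrant $\Omega$-spectrum and reading off both sides is a legitimate alternative route, and the reduction itself is fine: stable $\AA^1$-equivalences are \emph{defined} via $\H^{*,*}_{st-\AA^1}$, so fibrant replacement preserves both sides, and for $(n,m)=(0,0)$ the adjunction $(\Sigma^\infty,\Omega^\infty)$ plus fibrancy of $\mathbb E_0$ gives exactly what you want.

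The gap is in the twisted case. You write that one ``moves the invertible $\tilde R\{1\}$-twist across the $\Hom$'' and that ``the symmetric-group actions \ldots\ ultimately drop out of the $\Hom$-groups'', deferring to \cite[\S 7]{CD1}. But \cite[\S 7]{CD1} only constructs the model structure; it does not prove the comparison you need, and the paper cites Ayoub precisely because that is where it lives. Concretely: to identify $\Hom_{\DMt}(\mot(X),\mathbb E\{s\})$ with $\Hom_{\DMte}(\mot(X),\mathbb E^f_s)$ you must know that the free-spectrum/evaluation adjunction $(F_s,\mathrm{Ev}_s)$ agrees, in $\DMt$, with tensoring by $\tilde R\{\pm s\}$. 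Equivalently, you need the canonical map $F_1\tilde R\{1\}\to F_0\tilde R$ to be a stable $\AA^1$-equivalence. In the symmetric-spectrum formalism this is \emph{not} automatic: the levels of $F_1\tilde R\{1\}$ carry induced $\mathfrak S_k$-actions that do not a priori ``drop out'', and the standard mechanism forcing them to is exactly the cyclic-permutation hypothesis on $\tilde R\{3\}$ (this is Hovey's condition, and it is what Ayoub's cited results package). Without it your sentence ``the $\Omega$-spectrum condition computes that $\Omega^\infty$ as the filtered colimit\ldots'' has no content: for a fibrant $\mathbb E$ the colimit \eqref{eq:stable_A1_cohomology} is eventually constant, so there is nothing to take a colimit of; the actual work is the identification of $\mathbb E\{s\}$ with the shifted spectrum $\mathrm{sh}^s\mathbb E$ in $\DMt$, and that is where the permutation condition enters.

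So your outline can be made to work, but you must insert the cyclic-permutation input (Corollary~\ref{cor:permut_Tate}) at the point where you pass from twists to level-shifts, and you should also record why $\mot(X)$ being compact is needed if you want to commute the filtered colimit with $\Hom_{\DMte}(\mot(X),-)$ in the non-$\Omega$-spectrum direction of the argument. Once those two ingredients are named, your argument and the paper's are essentially the same statement unpacked to different depths.
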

\begin{proof}
 This follows from \cite[4.3.61 and 4.3.79]{Ayoub}
 which can be applied because of Remark \ref{rem:compact_gen_DMte}
 and the fact the cyclic permutation of order 3 acts on
 $\tilde R\{3\}$ as the identity in $\DMte$ (the proof of this fact is postponed until Corollary \ref{cor:permut_Tate}).
\end{proof}

In fact, as in the case of motivic complexes,
 one gets a better result if we assume the
 base field $k$ is infinite perfect.
 This is due to  the following
 analogue of Voevodsky's cancellation theorem \cite{Voevodsky10}, 
 which is proved in \cite{Fasel16}. 
\begin{thm}\label{thm:cancellation}
Let $k$ be a perfect infinite field. Then for any complexes
 $K$ and $L$ of $\mathrm{MW}$-sheaves, the morphism 
\[
\Hom_{\DMte}(K,L) \rightarrow \Hom_{\DMte}(K(1),L(1)),
\]
obtained by tensoring with the Tate twist, is an isomorphism.
\end{thm}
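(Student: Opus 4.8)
The plan is to transcribe Voevodsky's proof of the classical cancellation theorem \cite{Voevodsky10} into the Chow--Witt setting, replacing every finite correspondence that appears there by a finite $\mathrm{MW}$-correspondence carrying suitable $\sKMW$-data.

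First I would make the standard formal reductions. The functor $-\otimes\tilde R(1)\colon\DMte\to\DMte$ has right adjoint $\derR\uHom(\tilde R(1),-)$, so the assertion is equivalent to the unit map $L\to\derR\uHom(\tilde R(1),L(1))$ being an isomorphism in $\DMte$ for every complex $L$ of $\mathrm{MW}$-sheaves. As $\tilde R(1)\simeq\mot(\PP^1_k)/\mot(\{\infty\})[-2]$ is a cone of a morphism of compact objects it is itself compact, so both $-\otimes\tilde R(1)$ and $\derR\uHom(\tilde R(1),-)$ commute with arbitrary coproducts; since the motives $\mot(X)$ generate $\DMte$ (Remark \ref{rem:compact_gen_DMte}) it suffices to treat $L=\mot(X)$ for $X$ smooth. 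Writing $\tilde R\{1\}=\tilde R(1)[1]=\tilde R_t(\GG)/\tilde R_t(\{1\})$, so that $\tilde R\{1\}$ is a direct summand of $\tilde R_t(\GG)$, and using $\uHom(\tilde R_t(Y),F)=F(Y\times-)$ (Paragraph \ref{num:sht_monoidal}) together with the identification of $L_{\AA^1}$ with the Suslin complex (Corollary \ref{cor:LA1}), one identifies $\derR\uHom(\tilde R\{1\},\mot(X)\{1\})$ with a once-contracted Suslin complex built from $\tilde R_t(X\times\GG^{\wedge 2})$, reduced along the two base points $\{1\}$. The goal thus becomes to produce, naturally in $X$, a quasi-isomorphism between this complex and $\sus{\tilde R_t(X)}$ which is inverse to the unit; equivalently, cancellation reduces to the assertion that contracting once the Suslin complex of $\tilde R_t(X)\{1\}$ recovers $\sus{\tilde R_t(X)}$, which is exactly the shape of Voevodsky's theorem for presheaves with transfers.

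The heart of the matter is the construction of this explicit homotopy inverse. In the classical case Voevodsky writes down finite correspondences on products of $X$ with $\AA^1$, $\PP^1$ and $\GG$ --- encoding, roughly, multiplication of the $\GG$-coordinate together with $\AA^1$-deformations having prescribed fibres over $0$ and $1$ --- and checks by direct cycle computations that both composites with the evaluation pairing are chain homotopic to the identity. I would lift each of these cycles to a finite $\mathrm{MW}$-correspondence by attaching the section of the relevant twisted sheaf $\sKMW_n$ produced from the canonical orientations of affine space and the residue/transfer formalism of \cite[\S 4]{Calmes14b} --- in the same spirit as the class $Z(\phi)$ built in the proof of Lemma \ref{lem:functor} --- and then verify that each of Voevodsky's $\AA^1$-homotopies persists at the level of Chow--Witt groups. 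Throughout, Theorem \ref{thm:Wsh&A1} is what makes these manipulations legitimate: it guarantees that the homotopy-invariant $\mathrm{MW}$-presheaves arising from the Suslin complexes involved are strictly $\AA^1$-invariant, so that the Nisnevich-local computations go through and $\AA^1$-homotopies genuinely detect equalities in $\DMte$.

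I expect the main obstacle to be precisely this last verification. One must check that the $\sKMW$-data attached to Voevodsky's cycles satisfy the residue conditions of \cite[Def.~4.6]{Calmes14b}, transform correctly under the push-forwards and intersection products occurring in his homotopies, and --- most importantly --- that the resulting endomorphism of $\mot(X)$ is the identity rather than multiplication by a unit such as $\langle-1\rangle$ or by the hyperbolic class $h=\langle1\rangle+\langle-1\rangle$. An alternative, possibly lighter, route would be a dévissage along the short exact sequence $0\to\mathrm{I}\tilde R_t\{q\}\to\tilde R_t\{q\}\to R^{\mathrm{tr}}_t\{q\}\to0$ recalled above: the statement for the $R^{\mathrm{tr}}$-part is Voevodsky's theorem \cite{Voevodsky10}, so by the five lemma one would be reduced to a cancellation statement for the $\mathrm{I}$-sheaves $\mathrm{I}\tilde R_t$, governed by the Witt-theoretic sheaves of powers of the fundamental ideal; but this only reorganizes the work, since that remaining piece still requires its own cancellation argument.
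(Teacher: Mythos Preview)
The paper does not actually prove this theorem: immediately before the statement it says ``which is proved in \cite{Fasel16}'' and no argument is given. So there is no in-paper proof to compare your proposal against; your plan is, in outline, the strategy carried out in that reference --- a direct transcription of Voevodsky's cancellation argument \cite{Voevodsky10} to finite $\mathrm{MW}$-correspondences.

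That said, what you have written is a \emph{plan}, not a proof, and the gap is exactly where you locate it. Your formal reductions (adjunction, compactness of $\tilde R(1)$, reduction to $L=\mot(X)$, identification with contracted Suslin complexes via Corollary~\ref{cor:LA1}) are fine. But the entire content of the theorem lies in the step you describe only schematically: one must actually write down Chow--Witt lifts of Voevodsky's correspondences $\rho_n$ (built from the divisors of $t_1^{n+1}-t_2$ on $\GG\times\GG$), check that they define genuine classes in the relevant $\wCH$-groups with support, and then redo each of Voevodsky's homotopy computations at the level of $\sKMW$-cycles. The delicate point you flag --- that the composite might be $\langle -1\rangle$ or $h$ rather than the identity --- is real and must be resolved by an explicit calculation; it does not follow from general nonsense, and your sketch gives no mechanism for ruling it out. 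Until those computations are carried out, the proposal is a correct roadmap but not a proof.

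Your alternative d\'evissage along $0\to\mathrm{I}\tilde R_t\{q\}\to\tilde R_t\{q\}\to R^{\mathrm{tr}}_t\{q\}\to0$ is, as you yourself observe, not a shortcut: cancellation for the $\mathrm{I}$-part is no easier than the original statement, so this route does not avoid the work.
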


We then formally deduce the following corollary from this result.

\begin{cor}\label{cor:cancellation}
If $k$ is an infinite perfect field, the functor
$$
\Sigma^\infty:\DMte \rightarrow \DMt
$$
is fully faithful.
\end{cor}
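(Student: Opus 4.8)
The plan is to deduce the corollary formally from the cancellation theorem (Theorem~\ref{thm:cancellation}) together with the description of morphism groups in $\DMt$ given in the preceding proposition. The only genuine input is the cancellation theorem; everything else is a standard colimit‑and‑generators manipulation, so the statement is entirely formal once Theorem~\ref{thm:cancellation} is granted.

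First I would fix a smooth scheme $X$, a complex $L$ of $\mathrm{MW}$-sheaves, and an integer $m$, and apply the preceding proposition to $\mathbb E=\Sigma^\infty L$ and the object $\mot(X)$. Unwinding the identification $\Sigma^\infty L=(L\{r\})_{r\ge 0}$ and the relation $L\{r\}=L(r)[r]$, this yields a natural isomorphism
\[
\Hom_{\DMt}\big(\Sigma^\infty\mot(X),\Sigma^\infty L[m]\big)\;\simeq\;\ilim_{r\ge 0}\Hom_{\DMte}\big(\mot(X)(r),L(r)[m]\big),
\]
where the transition maps are induced by the suspension maps of $\Sigma^\infty L$, that is, by tensoring with the Tate twist $\tilde R(1)$ (absorbing the shifts coming from $\{1\}=(1)[1]$). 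By Theorem~\ref{thm:cancellation} each of these transition maps is an isomorphism, so the colimit is already attained at $r=0$; comparing with the $r=0$ term, and checking that the resulting map is the one induced by the unit of the adjunction, one obtains that $\Sigma^\infty$ induces an isomorphism
\[
\Hom_{\DMte}\big(\mot(X),L[m]\big)\xrightarrow{\ \sim\ }\Hom_{\DMt}\big(\Sigma^\infty\mot(X),\Sigma^\infty L[m]\big)
\]
for every $X$, every $L$ and every $m$.

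Next I would upgrade this to arbitrary source objects. For fixed $L$ and $m$, both functors $K\mapsto\Hom_{\DMte}(K,L[m])$ and $K\mapsto\Hom_{\DMt}(\Sigma^\infty K,\Sigma^\infty L[m])$ are cohomological and send coproducts to products — the second because $\Sigma^\infty$, being a left adjoint, commutes with coproducts — and the natural transformation between them induced by $\Sigma^\infty$ is an isomorphism on each $\mot(X)$. By Remark~\ref{rem:compact_gen_DMte} the objects $\mot(X)$ generate $\DMte$ as a localizing subcategory, so the full subcategory of those $K$ on which the comparison map is an isomorphism (for all $L$ and $m$) is all of $\DMte$. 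Since this holds after all shifts, $\Sigma^\infty$ is fully faithful.

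The main obstacle lies entirely outside this corollary, in Theorem~\ref{thm:cancellation}, the $\mathrm{MW}$-analogue of Voevodsky's cancellation theorem proved in \cite{Fasel16}; granted it, the argument above is purely formal, the only points requiring a little care being the bookkeeping of twists and shifts in the identification $L\{r\}=L(r)[r]$ and the verification that the abstract isomorphism of Hom-groups is indeed realised by the functor $\Sigma^\infty$, so that full faithfulness (and not merely an isomorphism of groups) follows.
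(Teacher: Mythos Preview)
Your proposal is correct and is precisely the formal deduction the paper has in mind: the paper merely states that the corollary follows formally from Theorem~\ref{thm:cancellation}, and your argument---computing $\Hom_{\DMt}(\Sigma^\infty\mot(X),\Sigma^\infty L[m])$ via the preceding proposition as a filtered colimit whose transition maps are the Tate-twist maps, invoking cancellation to see these are isomorphisms, and then propagating to all sources using that the $\mot(X)$ are compact generators---is exactly that formal deduction spelled out.
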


\section{$\mathrm{MW}$-motivic cohomology}\label{sec:MWcohom}

\subsection{$\mathrm{MW}$-motivic cohomology as ext-groups}

Given our construction of the triangulated category $\DMt$,
 we can now define, in the style of Beilinson,
 a generalization of motivic cohomology as follows.
 
\begin{df}\label{def:generalizedMW}
We define the \emph{$\mathrm{MW}$-motivic cohomology} of a smooth scheme $X$
 in degree $(n,i) \in \ZZ^2$ and coefficients in $R$ as:
$$
\H_{\mathrm{MW}}^{n,i}(X,R)=\Hom_{\DMt}(\mot(X),\tilde R(i)[n]).
$$
\end{df}
As usual, we deduce a cup-product on $\mathrm{MW}$-motivic cohomology.
 We define its \'etale variant by taking morphisms in $\DMtx{\et}$.
 Then we derive from the preceding (essentially) commutative diagram the
 following morphisms of cohomology theories, all compatible with products
 and pullbacks:
\begin{equation}\label{eq:MW-regulators}
\xymatrix@=14pt{
\H_{\AA^1}^{n,i}(X,R)\ar[r]\ar[d] & \H_{\mathrm{MW}}^{n,i}(X,R)\ar[r]\ar[d]
 & \H_M^{n,i}(X,R)\ar[d] \\
\H_{\AA^1,\et}^{n,i}(X,R)\ar[r] & \H_{\mathrm{MW},\et}^{n,i}(X,R)\ar[r]
 & \H_L^{n,i}(X,R).
}
\end{equation}
where $\H_{\AA^1}(X,R)$ and $\H_{\AA^1,\et}(X,R)$ are respectively Morel's stable $\AA^1$-derived cohomology and its \'etale version
 while $\H_M^{n,i}(X,R)$ and $\H_L^{n,i}(X,R)$ are respectively 
 the motivic cohomology and the Lichtenbaum motivic cohomology
 (also called \emph{\'etale motivic cohomology}).

Gathering all the informations we have obtained in the previous
 section on $\mathrm{MW}$-motivic complexes, we get the following computation.
 
\begin{prop}\label{prop:explicit}
Assume that $k$ is an infinite perfect field.
For any smooth scheme $X$ and any couple of integers $(n,m) \in \ZZ^2$,
 the $\mathrm{MW}$-motivic cohomology $\H^{n,m}_{\mathrm{MW}}(X,\ZZ)$ defined previously coincides
 with the generalized motivic cohomology defined in \cite{Calmes14b}.

More explicitly,
$$
\H^{n,m}_{\mathrm{MW}}(X,\ZZ)=\begin{cases}
\H^n_\zar(X,\tilde\ZZ(m)) & \text{if } m>0, \\
\H^n_\zar(X,\sKMW_0) & \text{if } m=0, \\
\H^{n-m}_\zar(X,\Wi) & \text{if } m<0
\end{cases}
$$
where $\sKMW_0$ (resp. $\Wi$) is the unramified 
 sheaf associated with Milnor-Witt K-theory in degree $0$ 
 (resp. unramified Witt sheaf) -- see \cite[\textsection 3]{Morel08}.
\end{prop}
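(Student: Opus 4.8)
The plan is to reduce the three cases to computations that have already been done, using the machinery assembled in the previous section. The key points are: (i) by Definition \ref{def:generalizedMW} and the adjunction $\Sigma^\infty:\DMte\leftrightarrows\DMt$, which is fully faithful when $k$ is infinite perfect (Corollary \ref{cor:cancellation}), the groups $\H^{n,m}_{\mathrm{MW}}(X,\ZZ)$ for $m\geq 0$ are the same whether computed in $\DMt$ or in $\DMte$, so one may work effectively; (ii) for $m>0$, the object computing them is the Suslin complex $\sus{\tilde R_t(m)}$, which by Corollary \ref{cor:LA1} represents $L_{\AA^1}\tilde R_t(m)$ and is therefore $\AA^1$-local; by Corollary \ref{cor:compare_Hom&cohomology} its morphisms out of $\tilde M(X)$ are the Nisnevich hypercohomology groups $\mathbb H^n_\nis(X,\sus{\tilde R_t(m)})$, and by Corollary \ref{cor:cohomcomplex} these agree with the Zariski ones; one then identifies $\sus{\tilde R_t(m)}$ with the complex denoted $\tilde\ZZ(m)$ in \cite{Calmes14b}, which is exactly how generalized motivic cohomology is defined there, and Corollary \ref{cor:W-motivic&generalized} already records this comparison (via \cite[Corollary 4.0.4]{Fasel16}).

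For the case $m=0$ the Tate twist is trivial, so $\H^{n,0}_{\mathrm{MW}}(X,\ZZ)=\Hom_{\DMte}(\tilde M(X),\tilde R_t(0)[n])=\Hom_{\DMte}(\tilde M(X),R[n])$, and I would invoke the earlier identification $\prep{\spec k}=\sKMW_0$ together with the fact that $\sKMW_0$ is a strictly $\AA^1$-invariant (hence $\AA^1$-local, by Corollary \ref{cor:A1-local_complexes}) $\mathrm{MW}$-sheaf. Thus the unit object $R$ of $\DMte$, which is $\tilde R_t(\spec k)=\sKMW_0$ (with $R=\ZZ$), is already $\AA^1$-local, and Corollary \ref{cor:compare_Hom&cohomology} gives $\H^{n,0}_{\mathrm{MW}}(X,\ZZ)=\mathbb H^n_\nis(X,\sKMW_0)=\H^n_\nis(X,\sKMW_0)$, which by Corollary \ref{cor:cohomcomplex} equals $\H^n_\zar(X,\sKMW_0)$.

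For the case $m<0$, negative twists live genuinely in the stable category $\DMt$, so one cannot stay effective. Here I would use the cancellation theorem (Theorem \ref{thm:cancellation}) in the form that $\tilde R(1)$ is $\otimes$-invertible and that $\Sigma^\infty$ is fully faithful, to rewrite $\H^{n,m}_{\mathrm{MW}}(X,\ZZ)=\Hom_{\DMt}(\mot(X),\tilde R(m)[n])$ as a colimit of effective groups $\varinjlim_r\Hom_{\DMte}(\mot(X)\{r\},\tilde R\{m+r\}[n-m])$ once $r\geq -m$; the stabilization picks out the $\GG$-stable part. One then identifies this with the hypercohomology of the homotopy-invariant sheaf obtained by $\GG$-stabilizing $\tilde R_t$, which is the unramified Witt sheaf $\Wi$ (this is exactly the content of \cite[Theorem 4.2.3]{Fasel16} / the negative-twist part of \cite[5.31]{Calmes14b}); the degree shift $n-m$ rather than $n$ reflects $\tilde R(m)=\tilde R\{m\}[-m]$. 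Again Corollary \ref{cor:cohomcomplex} replaces Nisnevich by Zariski cohomology, yielding $\H^{n-m}_\zar(X,\Wi)$.

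\textbf{Main obstacle.} The genuinely new content is minimal: the heavy lifting (strict $\AA^1$-invariance of $\AA^1$-invariant $\mathrm{MW}$-sheaves, Zariski–Nisnevich comparison, cancellation) is all quoted. The one place requiring real care is matching conventions — confirming that the complex $\sus{\tilde R_t(m)}$ and its $\GG$-stabilization coincide on the nose with the objects $\tilde\ZZ(m)$, $\sKMW_0$, $\Wi$ used in \cite{Calmes14b} and \cite{Fasel16}, including the correct cohomological degree shifts in each range of $m$. I expect the proof to consist mainly of citing Corollaries \ref{cor:W-motivic&generalized}, \ref{cor:cohomcomplex}, \ref{cor:LA1} together with \cite[Corollary 4.0.4]{Fasel16} (and its negative-twist analogue), and then unwinding the twist bookkeeping; the bookkeeping is the only step where an error could creep in.
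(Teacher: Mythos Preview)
Your treatment of the cases $m\geq 0$ matches the paper's proof: reduce to $\DMte$ via Corollary~\ref{cor:cancellation}, then invoke Corollary~\ref{cor:W-motivic&generalized} (which already packages Corollaries~\ref{cor:LA1}, \ref{cor:compare_Hom&cohomology}, \ref{cor:cohomcomplex} and the identification $\tilde R_t(\spec k)=\sKMW_0$).

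For $m<0$ your idea is correct but the paper's route is more direct and avoids the colimit formula entirely. Instead of passing through $\varinjlim_r\Hom_{\DMte}(\mot(X)\{r\},\tilde R\{m+r\}[n-m])$ and then appealing to an external ``$\GG$-stabilization'' result, the paper moves the negative twist to the source by $\otimes$-adjunction,
\[
\Hom_{\DMt}(\mot(X),\tilde\ZZ\{m\}[n-m])
=\Hom_{\DMte}\big(\mot(X),\derR\uHom(\tilde\ZZ\{-m\},\tilde\ZZ)[n-m]\big),
\]
using Corollary~\ref{cor:cancellation} for the last step. It then argues that $\tilde\ZZ\{-m\}$ is cofibrant and $\tilde\ZZ=\sKMW_0$ is already Nisnevich-local and $\AA^1$-invariant, so $\derR\uHom=\uHom$ here, and finally identifies the sheaf $\uHom(\tilde\ZZ\{-m\},\sKMW_0)$ with $\Wi$ via \cite[Lemma~5.23]{Calmes14b}. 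This is exactly the contraction computation you allude to (``$\GG$-stabilizing $\tilde R_t$''), but pinned down to a single concrete lemma rather than left as bookkeeping. Your approach would work once you make that identification precise; the paper's internal-Hom formulation just gets there in one step.
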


\begin{proof}
The cases $m>0$ and $m=0$ are clear from the previous corollary
 and Corollary \ref{cor:W-motivic&generalized}.

Consider the case $m<0$. Then we can use the following computation:
\begin{align*}
\H_{\mathrm{MW}}^{n,m}(X,\ZZ)&=\Hom_{\DMt}\big(\mot(X),\tilde \ZZ\{m\}[n-m]\big) \\
&=\Hom_{\DMt}\big(\mot(X)\otimes \tilde\ZZ\{-m\},\tilde \ZZ[n-m]\big) \\
&=\Hom_{\DMte}\big(\mot(X),\derR\uHom(\tilde \ZZ\{-m\},\tilde \ZZ)[n-m]\big)
\end{align*}
where the last identification follows from the preceding corollary and the usual adjunction.

As the $\mathrm{MW}$-motivic complex $\tilde \ZZ\{-m\}$ is cofibrant
 and the motivic complex $\tilde \ZZ=\sKMW_0$ is Nisnevich-local
 and $\AA^1$-invariant (cf. \cite[Ex. 4.4]{Calmes14b} and \cite[Cor. 11.3.3]{FaselCW}),
 we get:
$$
\derR\uHom(\tilde \ZZ\{-m\},\tilde \ZZ)=\uHom(\tilde \ZZ\{-m\},\tilde \ZZ)
$$
and this last sheaf is isomorphic to $\Wi$ according to
\cite[Lemma 5.23]{Calmes14b}. So the assertion now follows from
 Corollaries \ref{cor:A1-local_complexes} and \ref{cor:compare_Hom&cohomology}.\end{proof}

\begin{num}
We next prove a commutativity result for $\mathrm{MW}$-motivic cohomology. 
First, note that the sheaf 
$\tilde{R}\{1\}=\tilde{R}(\GGx k)/\tilde{R}(\{1\})$ is a direct factor of 
$\tilde{R}(\GGx k)$ and that the permutation map
\[
\sigma:\tilde{R}(\GGx k)\otimes \tilde{R}(\GGx k)\to \tilde{R}(\GGx k)\otimes 
\tilde{R}(\GGx k)
\]
given by the morphism of schemes $\GGx k\times \GGx k\to \GGx k\times \GGx k$ 
defined by $(x,y)\mapsto (y,x)$ induces a map
\[
\sigma:\tilde{R}\{1\}\otimes \tilde{R}\{1\}\to \tilde{R}\{1\}\otimes 
\tilde{R}\{1\}.
\] 
On the other hand, recall from Remark \ref{rem:finite_corr&plim} (5), that 
$\smc$ is $\sKMW_0(k)$-linear. In particular, the class of $\epsilon=-\langle 
-1\rangle\in \sKMW_0(k)$ (and its corresponding element in 
$\sKMW_0(k)\otimes_{\ZZ} R$ that we still denote by $\epsilon$) yields a 
$\mathrm{MW}$-correspondence 
\[
\epsilon=\epsilon\cdot Id:\tilde{R}(\GGx k)\otimes \tilde{R}(\GGx k)\to 
\tilde{R}(\GGx k)\otimes \tilde{R}(\GGx k)
\]
that also induces a $\mathrm{MW}$-correspondence
\[
\epsilon:\tilde{R}\{1\}\otimes \tilde{R}\{1\}\to \tilde{R}\{1\}\otimes 
\tilde{R}\{1\}.
\]
We can now state the following lemma (\cite[Lemma 3.0.6]{Fasel16}).
\end{num}
\begin{lm}
The $\mathrm{MW}$-correspondences $\sigma$ and $\epsilon$ are $\AA^1$-homotopic.
\end{lm}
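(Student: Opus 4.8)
The plan is to prove the sharper statement that, as endomorphisms of $\tilde R\{1\}\otimes\tilde R\{1\}$ in $\DMte$, both $\sigma$ and $\epsilon$ equal multiplication by the scalar $-\langle -1\rangle\in\KMW_0(k)$: two $\mathrm{MW}$-correspondences are $\AA^1$-homotopic precisely when they become equal in $\DMte$, and for $\epsilon$ the identity $\epsilon=-\langle -1\rangle\cdot\mathrm{Id}$ is its definition (recall from Remark~\ref{rem:finite_corr&plim}(5) that $\smc$, hence $\DMte$, is $\KMW_0(k)$-linear). So everything reduces to computing the braiding $\sigma$.

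First I would pass to the Thom-space model of the Tate twist. By definition $\tilde R\{1\}=L[-1]$, where $L:=\mot(\AA^1_k)/\mot(\AA^1_k-\{0\})$; the Zariski cover of $\AA^2_k-\{0\}$ by $\{x\neq 0\}$ and $\{y\neq 0\}$, together with $\AA^1$-invariance, yields a canonical isomorphism $L\otimes L\simeq\mot(\AA^2_k)/\mot(\AA^2_k-\{0\})$ which intertwines the symmetry $c_{L,L}$ of the monoidal structure with the morphism $\tau_{*}$ induced by the linear automorphism $\tau\colon(x,y)\mapsto(y,x)$ of $\AA^2_k$. Since $\tilde R\{1\}\otimes\tilde R\{1\}=(L\otimes L)[-2]$ and transposing the two shift operators introduces a Koszul sign, the braiding $c$ obeys $c_{M[-1],N[-1]}=-\,c_{M,N}[-2]$; hence, under the above identifications, $\sigma$ corresponds to $-\,\tau_{*}$ (shifted by $[-2]$). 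It remains to prove $\tau_{*}=\langle -1\rangle\cdot\mathrm{Id}$ on $\mot(\AA^2_k)/\mot(\AA^2_k-\{0\})$.

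For this I would use the $\AA^1$-contractibility of elementary matrices. Write the permutation matrix as a product of elementary and diagonal matrices, for instance $\tau=E_{12}(1)\,E_{21}(-1)\,E_{12}(1)\,D$ with $D=\mathrm{diag}(-1,1)$. For each $t\in k$ and each elementary matrix $E_{ij}(t)$, the morphism $\AA^2_k\times\AA^1_k\to\AA^2_k$, $(v,s)\mapsto E_{ij}(st)\cdot v$, is for every value of $s$ an automorphism of $\AA^2_k$ fixing the origin, hence restricts to an automorphism of $\AA^2_k-\{0\}$; it therefore defines a map $\big(\mot(\AA^2_k)/\mot(\AA^2_k-\{0\})\big)\otimes\mot(\AA^1_k)\to\mot(\AA^2_k)/\mot(\AA^2_k-\{0\})$ which is an $\AA^1$-homotopy between $E_{ij}(t)_{*}$ (at $s=1$) and the identity (at $s=0$). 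Thus $E_{ij}(t)_{*}=\mathrm{Id}$ in $\DMte$, so $\tau_{*}=D_{*}$. Finally $D=(\text{multiplication by }-1\text{ on }\AA^1_k)\times\mathrm{Id}_{\AA^1_k}$, so $D_{*}$ is, via the Künneth identification, multiplication by the scalar by which the unit $-1$ acts on $L=\mot(\AA^1_k)/\mot(\AA^1_k-\{0\})$, equivalently on $\tilde R\{1\}=\mot(\GG)/\mot(\{1\})$; that this scalar is $\langle -1\rangle\in\KMW_0(k)$ --- and, more generally, that a unit $u$ acts by $\langle u\rangle$ --- is the computation of the action of units on the Tate object, for which I would refer to \cite[\S 3]{Calmes14b} (compare also \cite[\S 3]{Morel08}). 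Therefore $\tau_{*}=\langle -1\rangle\cdot\mathrm{Id}$ and $\sigma=-\langle -1\rangle\cdot\mathrm{Id}=\epsilon$.

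The main obstacle is this last normalization: identifying the action of a unit $u$ on the Tate twist with the Milnor--Witt symbol $\langle u\rangle$, and carefully tracking the Koszul signs so that the braiding lands on $\epsilon=-\langle -1\rangle$ and not on $\langle -1\rangle$ (these two scalars are genuinely distinct in $\KMW_0(k)$). Once this normalization and the Künneth identifications for the relative motives $L$ and $L\otimes L$ are in place, the contractibility argument for the elementary factors is purely formal.
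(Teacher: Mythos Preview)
The paper does not give a proof of this lemma at all: it simply records the statement and cites \cite[Lemma 3.0.6]{Fasel16}. So there is no in-paper argument to compare against; you have supplied one where the paper defers to an external reference.

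Your argument is the standard Morel-style computation (pass to the Thom model $L=\mot(\AA^1)/\mot(\AA^1-\{0\})$, use Mayer--Vietoris to identify $L\otimes L$ with $\mot(\AA^2)/\mot(\AA^2-\{0\})$, kill the elementary factors of the permutation matrix by explicit $\AA^1$-homotopies, and reduce to the action of $\mathrm{diag}(-1,1)$), and it is sound. The Koszul sign you extract from $c_{L[-1],L[-1]}=-\,c_{L,L}[-2]$ is exactly what converts $\langle -1\rangle$ into $-\langle -1\rangle=\epsilon$, and you are right to flag it as the place where one must be careful. Two small comments. First, your opening sentence slightly overstates things: ``$\AA^1$-homotopic'' in the strict sense (existence of an explicit homotopy in $\smc$) is a priori stronger than ``equal in $\DMte$''. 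Your argument establishes the latter, which is what the paper actually uses (see the corollary immediately following the lemma); but the passage through the Thom model and the Mayer--Vietoris identification are isomorphisms in $\DMte$, not naive homotopies, so you should phrase the conclusion as equality in $\DMte$. Second, the reference you give for the normalization ``$u$ acts on $\tilde R\{1\}$ by $\langle u\rangle$'' is imprecise: \cite[\S 3]{Calmes14b} sets up Chow--Witt groups but does not literally contain this statement. A cleaner justification is via the Thom model: multiplication by $u$ on $\AA^1$ acts on $\wCH^1_{\{0\}}(\AA^1)\simeq\KMW_0(k)$ by $\langle u\rangle$, which is a direct computation with the Rost--Schmid/Gersten complex (cf.\ \cite[\S 5]{Morel08} or the push-forward formulas for Chow--Witt groups). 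With that normalization in hand, your argument is complete.
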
 

As an obvious corollary, we obtain the following result.

\begin{cor}\label{cor:permut_Tate}
For any $i,j \in \ZZ$, the switch 
$\tilde{R}(i)\otimes \tilde{R}(j)\to \tilde{R}(j)\otimes \tilde{R}(i)$ 
is $\AA^1$-homotopic to $\langle (-1)^{ij}\rangle$. 
\end{cor}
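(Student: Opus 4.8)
The plan is to decompose the switch into a sign coming from the two shifts and a permutation of the $\GG$-twist factors, and then to apply the preceding lemma to each adjacent transposition of those factors. First I would reduce to the case $i,j\geq 0$, so that everything takes place in $\DMte$; the case of arbitrary $i,j\in\ZZ$ follows afterwards by passing to $\DMt$, where $\tilde{R}(1)$ is $\otimes$-invertible.

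So assume $i,j\geq 0$ and recall that $\tilde{R}(i)=\tilde R\{i\}[-i]$ with $\tilde R\{i\}=\tilde R\{1\}^{\otimes i}$. Under the canonical identification $\tilde{R}(i)\otimes\tilde{R}(j)=\tilde R\{1\}^{\otimes(i+j)}[-i-j]=\tilde{R}(j)\otimes\tilde{R}(i)$, the Koszul sign rule for the symmetry constraint of a triangulated symmetric monoidal category shows that the switch equals $(-1)^{ij}$ times the braiding $c\colon\tilde R\{1\}^{\otimes(i+j)}\to\tilde R\{1\}^{\otimes(i+j)}$ moving the first block of $i$ tensor factors past the last block of $j$. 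I would then observe that, as an element of $\mathfrak S_{i+j}$, this block permutation has exactly $ij$ inversions, hence is a product of $ij$ adjacent transpositions; accordingly $c$ is a composite of $ij$ maps, each of the form $\sigma$ acting on two consecutive copies of $\tilde R\{1\}$ and the identity on the remaining factors. Since $\AA^1$-homotopy is compatible with composition and tensor product, the lemma then gives that $c$ is $\AA^1$-homotopic to the composite of $ij$ copies of $\epsilon\cdot\mathrm{Id}$, i.e. to $\epsilon^{ij}\cdot\mathrm{Id}$.

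To finish I would compute $\epsilon^{ij}$: since $\langle-1\rangle^2=\langle 1\rangle=1$ in $\sKMW_0(k)$ and $\epsilon=-\langle-1\rangle$, one has $\epsilon^{ij}=(-1)^{ij}\langle-1\rangle^{ij}=(-1)^{ij}\langle(-1)^{ij}\rangle$; combining with the shift sign, the switch is $\AA^1$-homotopic to $(-1)^{ij}(-1)^{ij}\langle(-1)^{ij}\rangle=\langle(-1)^{ij}\rangle$, as claimed. For arbitrary $i,j\in\ZZ$ I would use that in $\DMt$ the object $\tilde{R}(1)$ is $\otimes$-invertible and that the braiding of an invertible object squares to the identity, so the braiding attached to a negative power of $\tilde R\{1\}$ is again governed by the unit $\epsilon$; then either a direct computation or the hexagon identity together with the faithfulness of $-\otimes\tilde{R}(N)$ reduces the statement to the effective case already treated. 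The main obstacle is purely bookkeeping: pinning down the Koszul sign from the two shifts exactly, and checking that the block permutation contributes precisely $ij$ occurrences of $\sigma$ — once these are in place the conclusion is immediate from the lemma.
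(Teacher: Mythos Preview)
Your argument is correct and essentially identical to the paper's: both factor the switch on $\tilde R(i)\otimes\tilde R(j)$ as the Koszul sign $(-1)^{ij}$ coming from the shifts times the permutation on $\tilde R\{i\}\otimes\tilde R\{j\}$, identify the latter with $\epsilon^{ij}$ via the lemma, and finish with $(-1)^{ij}\epsilon^{ij}=\langle(-1)^{ij}\rangle$. You are simply more explicit than the paper in spelling out why the block permutation contributes $\epsilon^{ij}$ (via $ij$ adjacent transpositions) and in treating negative twists separately, whereas the paper just asserts the $\epsilon^{ij}$ step and cites the compatibility isomorphisms from \cite[Exercise~8A.2]{Mazza06} for the Koszul sign.
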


\begin{proof}
By definition, we have $\tilde{R}(i):=\tilde{R}\{i\}[-i]$ and 
$\tilde{R}(j):=\tilde{R}\{j\}[-j]$. We know from the previous lemma that the 
switch $\tilde{R}\{i\}\otimes \tilde{R}\{j\}\to \tilde{R}\{j\}\otimes 
\tilde{R}\{i\}$ is homotopic to $(\epsilon)^{ij}$. The result now follows from 
the compatibility isomorphisms for tensor triangulated categories (see e.g. 
\cite[Exercise 8A.2]{Mazza06}) and the fact that 
$(-1)^{ij}(\epsilon)^{ij}=\langle (-1)^{ij}\rangle$.
\end{proof}

\begin{thm}\label{thm:commutative}
Let $i,j\in \ZZ$ be integers. For any smooth scheme $X$, the pairing
\[
\mathrm{H}_{\mathrm{MW}}^{p,i}(X,R)\otimes 
\mathrm{H}_{\mathrm{MW}}^{q,j}(X,R)\to \mathrm{H}_{\mathrm{MW}}^{p+q,i+j}(X,R)
\]
is $(-1)^{pq}\langle (-1)^{ij}\rangle$-commutative.
\end{thm}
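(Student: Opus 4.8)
The plan is to unwind the definition of the cup product and reduce the whole statement to the computation of the symmetry isomorphism on Tate twists already performed in Corollary \ref{cor:permut_Tate}. Recall that the product on $\mathrm{H}^{*,*}_{\mathrm{MW}}(X,R)$ is defined, exactly as in Voevodsky's theory, from the diagonal $\delta_X\colon \mot(X)\to \mot(X)\otimes \mot(X)$ (the stabilization of the image under $\tilde\gamma$ of the diagonal $X\to X\times X$) together with the closed symmetric monoidal structure of $\DMt$: given $a\in \Hom_{\DMt}(\mot(X),\tilde R(i)[p])$ and $b\in \Hom_{\DMt}(\mot(X),\tilde R(j)[q])$, the class $a\cup b$ is the composite
\[
\mot(X)\xrightarrow{\ \delta_X\ } \mot(X)\otimes \mot(X)\xrightarrow{\ a\otimes b\ } \tilde R(i)[p]\otimes \tilde R(j)[q]\xrightarrow{\ \sim\ } \tilde R(i+j)[p+q],
\]
where the last arrow is the canonical identification built from $\tilde R(i)\otimes \tilde R(j)\simeq \tilde R(i+j)$ and the Koszul rule for moving suspensions across $\otimes$.

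First I would record that $\delta_X$ is cocommutative, i.e. $\tau\circ \delta_X=\delta_X$, where $\tau$ denotes the symmetry constraint of $\DMt$ applied to $\mot(X)\otimes \mot(X)$; this is immediate since the morphism of schemes $X\to X\times X$ is fixed by the flip of $X\times X$ and the functor $\sm\to \DMt$ is symmetric monoidal (Corollary \ref{cor:adjunctions_corr} and the construction of the stable category). Using this, together with the naturality of $\tau$ with respect to $a\otimes b$, I would deduce that $b\cup a$ equals $a\cup b$ post-composed with the automorphism of $\tilde R(i+j)[p+q]$ measuring the discrepancy between the two ways of identifying $\tilde R(i)[p]\otimes \tilde R(j)[q]$ with $\tilde R(j)[q]\otimes \tilde R(i)[p]$, namely the canonical identification versus the symmetry $\tau$. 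Thus the whole theorem is reduced to identifying $\tau$ on $\tilde R(i)[p]\otimes \tilde R(j)[q]$.

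The next step is to split this symmetry into its two contributions. Interchanging the suspensions $[p]$ and $[q]$ in the tensor triangulated category $\DMt$ produces the Koszul sign $(-1)^{pq}$, which is part of the coherence relating the suspension functor to the symmetric monoidal structure (see the discussion around \cite[Exercise 8A.2]{Mazza06}, already used in the proof of Corollary \ref{cor:permut_Tate}). The remaining factor is the symmetry $\tilde R(i)\otimes \tilde R(j)\to \tilde R(j)\otimes \tilde R(i)$ on the unshifted Tate twists, which by Corollary \ref{cor:permut_Tate} is $\AA^1$-homotopic to $\langle (-1)^{ij}\rangle$. Multiplying the two gives $b\cup a=(-1)^{pq}\langle(-1)^{ij}\rangle\,(a\cup b)$, which is precisely the asserted $(-1)^{pq}\langle(-1)^{ij}\rangle$-commutativity; here one uses in addition that $\langle(-1)^{ij}\rangle$ comes from $\sKMW_0(k)$ and is therefore central, so there is no ambiguity about the side on which it acts (Remark \ref{rem:finite_corr&plim}(5)).

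The hard part is not conceptual but bookkeeping: one must be scrupulous about the coherence isomorphisms of the tensor triangulated structure --- in particular the precise interplay of $[1]$ with the symmetry constraint, and the verification that the ``canonical identification'' $\tilde R(i)[p]\otimes \tilde R(j)[q]\simeq \tilde R(i+j)[p+q]$ entering the definition of the product is the same one (up to the expected signs) obtained when the computation is routed through $\tau$. Once the conventions are fixed consistently (following, say, \cite{Ayoub} or \cite{Mazza06}), the argument is the formal one above, with Corollary \ref{cor:permut_Tate} supplying all of the geometric input.
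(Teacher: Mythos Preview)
Your proposal is correct and is exactly the argument the paper has in mind: the paper's proof consists of the single sentence ``The proof of \cite[Theorem 15.9]{Mazza06} applies mutatis mutandis,'' and what you have written is precisely that proof spelled out, with Corollary \ref{cor:permut_Tate} replacing the classical computation of the switch on Tate twists.
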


\begin{proof}
The proof of \cite[Theorem 15.9]{Mazza06} applies mutatis mutandis.
\end{proof}

\subsection{Comparison with Chow-Witt groups}

\subsubsection{The naive Milnor-Witt presheaf}.
Let $S$ be a ring and let $S^\times$ be the group of units in $S$. We define the naive Milnor-Witt presheaf of $S$ as in the case of fields by considering the free $\ZZ$-graded algebra $A(S)$ generated by the symbols $[a]$ with $a\in S^\times$ in degree $1$ and a symbol $\eta$ in degree $-1$ subject to the usual relations:

\begin{enumerate}
\item $[ab]=[a]+[b]+\eta [a][b]$ for any $a,b\in S^\times$.
\item $[a][1-a]=0$ for any $a\in S^\times\setminus \{1\}$.
\item $\eta [a]=[a]\eta$ for any $a\in S^\times$
\item $\eta (\eta [-1]+2)=0$.
\end{enumerate}

This definition is clearly functorial in $S$ and it follows that we obtain a presheaf of $\ZZ$-graded algebras on the category of smooth schemes via
\[
X\mapsto \KMW_*(\OO(X)).
\]
We denote by $\sKMW_{*,\mathrm{naive}}$ the associated Nisnevich sheaf of graded $\ZZ$-algebras and observe that this definition naturally extends to essentially smooth $k$-schemes. Our next aim is to show that this naive definition in fact coincides with the definition of the unramified Milnor-Witt $K$-theory sheaf given in \cite[\S 3]{Morel08} (see also \cite[\S 1]{Calmes14b}). Indeed, let $X$ be a smooth integral scheme. The ring homomorphism $\OO(X)\to k(X)$ induces a ring homomorphism $\KMW_*(\OO(X))\to \KMW_*(k(X))$ and it is straightforward to check that elements in the image are unramified, i.e. that the previous homomorphism induces a ring homomorphism $\KMW_*(\OO(X))\to \sKMW_*(X)$. By the universal property of the associated sheaf, we obtain a morphism of sheaves
\[
\sKMW_{*,\mathrm{naive}}\to \sKMW_*.
\]
If $X$ is an essentially smooth local $k$-scheme, it follows from \cite[Theorem 6.3]{Gille15} that the map $\sKMW_{*,\mathrm{naive}}(X)\to \sKMW_*(X)$ is an isomorphism, showing that the above morphism is indeed an isomorphism.

\begin{num}\textbf{A comparison map}.
Let now $X$ be a smooth connected scheme and let $a\in \OO(X)^\times$ be an invertible global section. It corresponds to a morphism $X\to \GGx k$ and in turn to an element in $\smc(X,\GGx k)$ yielding a map 
\[
s(a):\OO(X)^\times\to \mathrm{Hom}_{\DMteZ}(\tilde M(X),\tilde \ZZ\{1\})=\H^{1,1}_{\mathrm{MW}}(X,\ZZ).
\]
Consider next the correspondence $\eta[t]\in \wCH^0(X\times \GGx k)=\smc (X\times \GGx k,\spec k)$ and observe that it restricts trivially when composed with the map $X\to X\times \GGx k$ given by $x\mapsto (x,1)$. It follows that we obtain an element 
\[
s(\eta)\in \mathrm{Hom}_{\DMteZ}(\tilde M(X)\otimes \tilde \ZZ\{1\},\tilde \ZZ)=\mathrm{Hom}_{\DMt}(\tilde M(X),\tilde \ZZ\{-1\})=\H^{-1,-1}_{\mathrm{MW}}(X,\ZZ).
\]
Using the product structure of the cohomology ring, we finally obtain a (graded) ring homomorphism 
\[
s:A(\OO(X))\to \bigoplus_{n\in \ZZ}\H^{n,n}_{\mathrm{MW}}(X,\ZZ)
\]
that is easily seen to be functorial in $X$.  
\end{num}

\begin{thm}
Let $X$ be a smooth scheme. Then, the graded ring homomorphism
\[
s:A(\OO(X))\to \bigoplus_{n\in \ZZ}\H^{n,n}_{\mathrm{MW}}(X,\ZZ)
\]
induces a graded ring homomorphism
\[
s:\KMW_*(\OO(X))\to \bigoplus_{n\in \ZZ}\H^{n,n}_{\mathrm{MW}}(X,\ZZ)
\] 
which is functorial in $X$.
\end{thm}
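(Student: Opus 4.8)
The plan is to reduce the statement to the vanishing of the four defining relations of Milnor--Witt $\mathrm K$-theory under $s$, and then to reduce each of those to a computation over the function field of $X$. Since $A(\OO(X))$ is the free $\ZZ$-graded algebra on the symbols $[a]$ ($a\in\OO(X)^\times$, degree $1$) and $\eta$ (degree $-1$), and $\KMW_*(\OO(X))$ is its quotient by the two-sided graded ideal generated by relations $(1)$--$(4)$, and since $s$ is already a morphism of graded rings, it suffices to show that $s$ annihilates each relation. Relation $(1)$ is an identity in $\H^{1,1}_{\mathrm{MW}}(X,\ZZ)$, the Steinberg relation $(2)$ in $\H^{2,2}_{\mathrm{MW}}(X,\ZZ)$, relation $(3)$ in $\H^{0,0}_{\mathrm{MW}}(X,\ZZ)$, and relation $(4)$ in $\H^{-1,-1}_{\mathrm{MW}}(X,\ZZ)$. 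As these relations are compatible with pull-backs and $s$ is functorial in $X$, we may assume $X$ irreducible; write $F=k(X)$.

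Next I would identify the four ambient groups, showing in particular that each embeds into the stalk at $\spec F$ of a strictly $\AA^1$-invariant (hence unramified, in Morel's sense \cite{Morel08}) sheaf. By Proposition~\ref{prop:explicit} one has $\H^{0,0}_{\mathrm{MW}}(X,\ZZ)=\mathrm H^0_\zar(X,\sKMW_0)$ and $\H^{-1,-1}_{\mathrm{MW}}(X,\ZZ)=\mathrm H^0_\zar(X,\Wi)$. Using that $\tilde\ZZ(1)$ is quasi-isomorphic to $\sKMW_1[-1]$ (the $\mathrm{MW}$-analogue of Voevodsky's identification of the weight-one motivic complex with $\GG[-1]$), one gets $\H^{1,1}_{\mathrm{MW}}(X,\ZZ)=\mathrm H^1_\zar(X,\tilde\ZZ(1))=\mathrm H^0_\zar(X,\sKMW_1)$. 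For $\H^{2,2}_{\mathrm{MW}}(X,\ZZ)=\mathrm H^2_\zar(X,\tilde\ZZ(2))$ — the delicate case — I would invoke the coniveau (Rost--Schmid) spectral sequence of the $\AA^1$-local complex $\tilde\ZZ(2)$, whose homology sheaves are strictly $\AA^1$-invariant by Theorem~\ref{thm:Wsh&A1} and Corollary~\ref{cor:A1-local_complexes} and hence admit Gersten resolutions: in total degree $2$ the $p=1$ column contributes terms of the shape $\bigoplus_{x\in X^{(1)}}\mathrm H^0_\zar(\spec\kappa(x),\tilde\ZZ(1)\otimes\omega_x)$, which vanish precisely because $\tilde\ZZ(1)$ sits in cohomological degree $1$, and the columns $p\ge 2$ vanish for dimension reasons, so that $\H^{2,2}_{\mathrm{MW}}(X,\ZZ)$ is the group of unramified elements of $\KMW_2(F)$. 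In all four cases it is then enough to check the relations after restriction along $\spec F\to X$; by functoriality of $s$ for this essentially smooth scheme (Remark~\ref{rem:finite_corr&plim}(4)), this restriction is the corresponding relation for $s\colon A(F)\to\bigoplus_n\H^{n,n}_{\mathrm{MW}}(\spec F,\ZZ)$.

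Finally, over the field $F$ the plan succeeds tautologically: by Proposition~\ref{prop:explicit}, Corollary~\ref{cor:W-motivic&generalized} and Morel's identification $\KMW_n(F)\cong\mathrm W(F)$ for $n<0$, the graded group $\bigoplus_n\H^{n,n}_{\mathrm{MW}}(\spec F,\ZZ)$ is $\KMW_*(F)$; unwinding the construction of $s$ — and using Theorem~\ref{thm:commutative} and Corollary~\ref{cor:permut_Tate} to match the $\epsilon$-commutativity of the cup product with that of $\KMW_*(F)$ — shows that $s$ becomes the canonical projection $A(F)\twoheadrightarrow\KMW_*(F)$, which by definition kills $(1)$--$(4)$. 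Hence $s$ kills these relations over $X$ as well and factors through $\KMW_*(\OO(X))$, the functoriality in $X$ of the factored map being inherited from that of $s$. The main obstacle is the second paragraph — pinning down $\tilde\ZZ(1)\simeq\sKMW_1[-1]$ and setting up the Gersten spectral sequence that controls $\H^{2,2}_{\mathrm{MW}}(X,\ZZ)$ — together with the verification that the cup product on diagonal $\mathrm{MW}$-motivic cohomology of a field is the product of $\KMW_*$; everything else is formal. (Alternatively, the Steinberg relation admits a direct treatment \`a la Voevodsky: by functoriality it reduces to the assertion that the morphism
\[
\tilde M(\AA^1_k\setminus\{0,1\})\longrightarrow \tilde\ZZ\{1\}\otimes\tilde\ZZ\{1\}=\tilde\ZZ\{2\}
\]
induced by $t\mapsto(t,1-t)\colon\AA^1_k\setminus\{0,1\}\to\GG\times\GG$ vanishes in $\DMteZ$, which one proves by exhibiting an explicit $\AA^1$-homotopy of finite $\mathrm{MW}$-correspondences, imitating \cite[\S 5]{Mazza06} with quadratic-form coefficients.)
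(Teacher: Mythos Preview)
Your strategy---reduce to the function field by injectivity and then identify $s$ over a field with the canonical projection $A(F)\twoheadrightarrow\KMW_*(F)$---is genuinely different from the paper's direct verification, but it has a real gap and a circularity risk.

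\textbf{The gap.} The injectivity of the restriction $\H^{2,2}_{\mathrm{MW}}(X,\ZZ)\hookrightarrow\KMW_2(k(X))$, which you need for the Steinberg relation, is not available at this point in the paper. Your coniveau argument computes the $p=1$ column as $\bigoplus_{x\in X^{(1)}}\H^0(\spec\kappa(x),\tilde\ZZ(1)\otimes\omega_x)$, but this already presupposes a Thom/purity isomorphism for the \emph{complex} $\tilde\ZZ(2)$. One can deduce purity for the complex from purity for its strictly $\AA^1$-invariant cohomology sheaves (Morel), but then the relevant local term is $(\mathbf H^{1,2}_{\mathrm{MW}})_{-1}(\kappa(x))$, which by cancellation equals $\mathbf H^{0,1}_{\mathrm{MW}}(\kappa(x))$; to make this vanish you need $\tilde\ZZ(1)\simeq\sKMW_1[-1]$ in $\DMte$, which you assert but do not prove. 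In short, the injectivity you want is essentially equivalent to a Beilinson--Soul\'e--type vanishing $\mathbf H^{q,2}_{\mathrm{MW}}=0$ for $q<2$ (or at least the contractions thereof over fields), and that is not established here.

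\textbf{The circularity risk.} Over a field $F$, saying ``$s$ becomes the canonical projection $A(F)\twoheadrightarrow\KMW_*(F)$'' is not a tautology: an identification of \emph{groups} $\bigoplus_n\H^{n,n}_{\mathrm{MW}}(\spec F,\ZZ)\cong\KMW_*(F)$ does not tell you what $s$ does. You must check that $s([a])\mapsto[a]$, $s(\eta)\mapsto\eta$, and that the cup product matches the product on $\KMW_*(F)$. That verification is precisely the content of the theorem (over $F$), and the paper's later Theorem~\ref{thm:KMWmotivic} in fact \emph{uses} the present theorem to make sense of $s_L$. So your third paragraph is either punting on the real work or leaning on results proved downstream.

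\textbf{What the paper does instead.} The paper checks the four relations directly, for arbitrary smooth $X$, by arguments that do not require any reduction to fields. Relation~(3) comes from the $\epsilon$-commutativity of Theorem~\ref{thm:commutative} together with $\epsilon\cdot s(\eta)=s(\eta)$; relation~(4) from the identity $s(\eta)s(-1)+1=\langle-1\rangle$ (\cite[Lemma~6.0.1]{Fasel16}); relation~(1) from identifying the correspondence $m-p_1-p_2:\tilde\ZZ\{2\}\to\tilde\ZZ\{1\}$ with $s(\eta)$ via cancellation (\cite[Lemma~6.0.2]{Fasel16}). For Steinberg the paper takes essentially your ``alternative'' route, but more efficiently: rather than building an explicit $\AA^1$-homotopy of finite $\mathrm{MW}$-correspondences, it notes that the morphism $\tilde M(\AA^1\setminus\{0,1\})\to\tilde\ZZ\{2\}$ already vanishes in $\DAeZ$ by \cite{Hu2001} and then applies $\derL\tilde\gamma^*$. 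This bypasses both the injectivity problem and any need to control $\tilde\ZZ(1)$ or $\tilde\ZZ(2)$ as complexes.
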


\begin{proof}
We have to check that the four relations defining Milnor-Witt $K$-theory hold in the graded ring on the right-hand side. First, note that Theorem \ref{thm:commutative} yields $\epsilon s(\eta)s(a)=s(a)s(\eta)$ and the third relation follows from the fact that $\epsilon s(\eta)=s(\eta)$ by construction. Observe next that $s(\eta)s(-1)+1=\langle -1\rangle$ by \cite[Lemma 6.0.1]{Fasel16} and it follows easily that $s(\eta)(s(\eta)s(-1)+2)=0$. Next, consider the multiplication map
\[
m:\GGx k\times \GGx k\to \GGx k
\]
and the respective projections on the $j$-th factor
\[
p_j:\GGx k\times \GGx k\to \GGx k
\]
for $j=1,2$.
They all define correspondences that we still denote by the same symbols and it is straightforward to check that $m-p_1-p_2$ defines a morphism $\tilde \ZZ\{1\}\otimes \ZZ\{1\}\to \tilde\ZZ\{1\}$ in $\smc$. It follows from \cite[Lemma 6.0.2]{Fasel16} that this correspondence corresponds to $s(\eta)$ through the isomorphism
\[
\mathrm{Hom}_{\DMteZ}(\tilde\ZZ\{1\},\tilde\ZZ)\to \mathrm{Hom}_{\DMteZ}(\tilde \ZZ\{1\}\otimes \ZZ\{1\},\tilde\ZZ\{1\})
\]
given by the cancellation theorem. As a corollary, we see that the defining relation (1) of Milnor-Witt $K$-theory is satisfied in $\bigoplus_{n\in \ZZ}\H^{n,n}_{\mathrm{MW}}(X,\ZZ)$. Indeed, if $a,b\in \OO(X)^\times$, then $s(a)s(b)$ is represented by the morphism $X\to \GGx k\times \GGx k$ corresponding to $(a,b)$. Applying $m-p_1-p_2$ to this correspondence, we get $s(ab)-s(a)-s(b)$ which is $s(\eta)s(a)s(b)$ by the above discussion.

To check that the Steinberg relation holds in the right-hand side, we first consider the morphism
\[
\AA^1\setminus \{0,1\}\to \AA^1\setminus \{0\}\times \AA^1\setminus \{0\}
\]
defined by $a\mapsto (a,1-a)$. Composing with the correspondence $\tilde{M}( \AA^1\setminus \{0\}\times \AA^1\setminus \{0\})\to \tilde{M}((\GGx k)^{\wedge 2})$, we obtain a morphism
\[
\tilde{M}(\AA^1\setminus \{0,1\})\to \tilde{M}((\GGx k)^{\wedge 2}).
\]
We can perform the same computation in $\DAeZ$ where this morphism is trivial by \cite{Hu2001} and we conclude that it is also trivial in $\DMteZ$ by applying the functor $\derL\tilde\gamma^*$.
\end{proof}

For any $p,q\in \ZZ$, we denote by $\mathbf{H}_{\mathrm{MW}}^{p,q}$ the (Nisnevich) sheaf associated to the presheaf $X\mapsto \H_{\mathrm{MW}}^{p,q}(X,\ZZ)$. The homomorphism of the previous theorem induces a morphism on induced sheaves and we have the following result.

\begin{thm}\label{thm:KMWmotivic}
The homomorphism of sheaves of graded rings
\[
s:\sKMW_*\to \bigoplus_{n\in \ZZ}\mathbf{H}_{\mathrm{MW}}^{n,n}
\]
is an isomorphism.
\end{thm}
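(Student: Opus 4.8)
The plan is to reduce the statement to a stalkwise isomorphism and to identify the target sheaves $\mathbf{H}^{n,n}_{\mathrm{MW}}$ with the top cohomology sheaves of the $\mathrm{MW}$-motivic complexes $\tilde{\ZZ}(n)$. By Proposition~\ref{prop:explicit} together with Corollaries~\ref{cor:LA1}, \ref{cor:cohomcomplex} and~\ref{cor:compare_Hom&cohomology}, the sheaf $\mathbf{H}^{n,n}_{\mathrm{MW}}$ is, for $n>0$, the $n$-th Nisnevich cohomology sheaf $\underline{\H}^n(\tilde{\ZZ}(n))$ of the $\AA^1$-local complex $\tilde{\ZZ}(n)$; since $\tilde{\ZZ}(n)=\sus{\tilde{\ZZ}\{n\}}[-n]$ with $\sus{\tilde{\ZZ}\{n\}}$ concentrated in cohomological degrees $\le 0$, this complex is concentrated in degrees $\le n$, and $\mathbf{H}^{n,n}_{\mathrm{MW}}$ is strictly $\AA^1$-invariant by Corollary~\ref{cor:A1-local_complexes}; for $n=0$ it is $\sKMW_0$ and for $n<0$ it is the unramified Witt sheaf $\Wi$. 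As $s$ is a morphism of Nisnevich sheaves, it is enough to prove it is an isomorphism on stalks, i.e. on sections over essentially smooth local henselian $k$-schemes $\OO$. There $\sKMW_n(\OO)=\KMW_n(\OO)$ by the naive presentation (\cite[Theorem~6.3]{Gille15}), and, the higher cohomology of $\spec\OO$ vanishing, $\mathbf{H}^{n,n}_{\mathrm{MW}}(\OO)=\H^n\bigl(\tilde{\ZZ}(n)(\OO)\bigr)$ for $n>0$. The cases $n\le 0$ are settled at once: using the identification $\tilde{\ZZ}=\sKMW_0$ (\cite[Ex.~4.4]{Calmes14b}) and \cite[Lemma~6.0.1]{Fasel16}, and that $s$ is a ring homomorphism (the preceding theorem), one sees that in degree $0$ the map $s$ is the canonical comparison $\KMW_0\to\sKMW_0$ and in negative degrees it is obtained from it by multiplying with $s(\eta)^{-n}$; since $s(\eta)$ realises the canonical quotient $\mathrm{GW}\to W$ and the identity of $W$ in degrees $\le -1$, this yields an isomorphism on stalks.

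The substance of the theorem is the case $n>0$, which is the Milnor--Witt refinement of the theorem of Nesterenko--Suslin--Totaro. The short exact sequence of $\mathrm{MW}$-sheaves $0\to\mathrm{I}\tilde{\ZZ}\{n\}\to\tilde{\ZZ}\{n\}\to\ZZ^{\mathrm{tr}}\{n\}\to 0$ recalled in this section is a distinguished triangle in $\Der(\sht)$; applying the $\AA^1$-localisation (which is $E\mapsto\sus{E}$ by Corollary~\ref{cor:LA1}) and shifting by $[-n]$ produces a distinguished triangle $\mathrm{I}\tilde{\ZZ}(n)\to\tilde{\ZZ}(n)\to\ZZ^{\mathrm{tr}}(n)\to\mathrm{I}\tilde{\ZZ}(n)[1]$ in $\DMte$, with $\ZZ^{\mathrm{tr}}(n)$ the Voevodsky motivic complex and all three terms concentrated in cohomological degrees $\le n$. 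The associated long exact sequence of Nisnevich cohomology sheaves, combined with the classical $\underline{\H}^i(\ZZ^{\mathrm{tr}}(n))=0$ for $i>n$ and $\underline{\H}^n(\ZZ^{\mathrm{tr}}(n))=\sKM_n$ (\cite{Mazza06}) and with the analogous statements for the $\mathrm{I}$-complex, namely $\underline{\H}^i(\mathrm{I}\tilde{\ZZ}(n))=0$ for $i>n$ and $\underline{\H}^n(\mathrm{I}\tilde{\ZZ}(n))=\mathbf{I}^{n+1}$ (the $(n+1)$-st power of the fundamental ideal sheaf), yields a short exact sequence of Nisnevich sheaves
\[
0\longrightarrow\mathbf{I}^{n+1}\longrightarrow\mathbf{H}^{n,n}_{\mathrm{MW}}\longrightarrow\sKM_n\longrightarrow 0 .
\]
On the other hand, Morel's Cartesian-square presentation of Milnor--Witt $K$-theory (\cite{Morel08}) exhibits $\sKMW_n$ as an extension of $\sKM_n$ by $\mathbf{I}^{n+1}$. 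One then checks, directly from the construction of $s$, that it induces a morphism between these two short exact sequences: on the quotient $\sKM_n$ it agrees with the usual motivic cycle class map, an isomorphism by Nesterenko--Suslin--Totaro, the compatibility being a consequence of the compatibility of $s$ with the functor $\derL\pi^*$ (i.e. with the map $\H^{n,n}_{\mathrm{MW}}\to\H^{n,n}_M$ of~\eqref{eq:MW-regulators}); on the sub-object, the composite $\mathbf{I}^{n+1}\hookrightarrow\sKMW_n\xrightarrow{s}\mathbf{H}^{n,n}_{\mathrm{MW}}$ factors through $\underline{\H}^n(\mathrm{I}\tilde{\ZZ}(n))$ and coincides there with the comparison map for $\mathrm{I}$-cohomology, again an isomorphism. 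The five lemma then gives that $s\colon\sKMW_n\to\mathbf{H}^{n,n}_{\mathrm{MW}}$ is an isomorphism, and the theorem follows.

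The main obstacle is the $\mathrm{I}^{j}$-analogue of Nesterenko--Suslin--Totaro used above, namely the computation $\underline{\H}^n(\mathrm{I}\tilde{\ZZ}(n))=\mathbf{I}^{n+1}$ together with the vanishing of $\underline{\H}^i(\mathrm{I}\tilde{\ZZ}(n))$ for $i>n$, which should be extractable from the framework of \cite{Calmes14b} along the lines of \cite{Fasel16}. Equally delicate is the verification that $s$ is genuinely compatible with the two edge maps of the short exact sequences, so that the five lemma legitimately applies: it does not suffice that the sub-objects and quotients coincide abstractly; one must trace the comparison maps through the definitions of $s$, of $\pi$, and of the identification of the Witt sheaf. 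Alternatively, one may bypass this last point by invoking the computation of the cohomology sheaves of the generalised motivic complex already available in \cite{Calmes14b} (via Corollary~\ref{cor:W-motivic&generalized}), and then merely checking that $s$ realises the ensuing isomorphism.
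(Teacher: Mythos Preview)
Your proposal takes a genuinely different route from the paper, and it contains real gaps that you yourself flag.

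The paper's proof is much shorter. It observes that both source and target are strictly $\AA^1$-invariant Nisnevich sheaves --- for $\sKMW_*$ this is Morel's result, and for $\mathbf{H}^{n,n}_{\mathrm{MW}}$ it follows by applying Theorem~\ref{thm:A1local_framedPSh} to the $\AA^1$-invariant $\mathrm{MW}$-presheaf $X\mapsto\H^{n,n}_{\mathrm{MW}}(X,\ZZ)$. It then invokes the principle from \cite[\S 2]{Morel08} that a morphism of strictly $\AA^1$-invariant sheaves is an isomorphism as soon as it is an isomorphism on every finitely generated field extension $L/k$; the field case is precisely \cite[Theorem~6.19]{Calmes14b}. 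No d\'evissage, no five lemma, no separate treatment of $n\le 0$ versus $n>0$.

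Your approach via the distinguished triangle $\mathrm{I}\tilde\ZZ(n)\to\tilde\ZZ(n)\to\ZZ^{\mathrm{tr}}(n)$ and the five lemma is more structural but leans on two ingredients you do not establish: the identification $\underline{\H}^n(\mathrm{I}\tilde\ZZ(n))=\mathbf{I}^{n+1}$, and the compatibility of $s$ with the two short exact sequences so that the five lemma genuinely applies. You acknowledge both as obstacles. The first, were you to prove it, would most naturally be handled by exactly the ``check on fields plus strict $\AA^1$-invariance'' argument the paper uses for the theorem itself --- so you are replacing one instance of that argument by a harder instance plus extra bookkeeping. Your closing ``alternative'' does not close the gap either: Corollary~\ref{cor:W-motivic&generalized} identifies the cohomology groups with those of \cite{Calmes14b}, but it does not tell you that the specific ring map $s$ realises that identification, so the compatibility question simply resurfaces in another guise.
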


\begin{proof}
Let $L/k$ be a finitely generated field extension. Then, it follows from \cite[Theorem 6.19]{Calmes14b} that the homomorphism $s_L$ is an isomorphism. Now, the presheaf on $\smc$ given by $X\mapsto \bigoplus_{n\in \ZZ}\H^{n,n}_W(X,\ZZ)$ is homotopy invariant by definition. It follows from Theorem \ref{thm:A1local_framedPSh} that the associated sheaf is strictly $\AA^1$-invariant. Now, $\sKMW_*$ is also strictly $\AA^1$-invariant and it follows from \cite[Definition 2.1, Remark 2.3, Theorem 2.11]{Morel08} that $s$ is an isomorphism if and only if $s_L$ is an isomorphism for any finitely generated field extension $L/k$.
\end{proof}


\begin{thm}\label{thm:hyper}
For any smooth scheme $X$ and any integers $p,n\in \ZZ$, the hypercohomology spectral sequence induces isomorphisms
\[
\H^{p,n}_{\mathrm{MW}}(X,\ZZ)\to \H^{p-n}(X,\sKMW_n)
\]
provided $p\geq 2n-1$. 
\end{thm}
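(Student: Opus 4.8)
The plan is to reinterpret $\H^{p,n}_{\mathrm{MW}}(X,\ZZ)$ as the Nisnevich hypercohomology of a bounded-above complex of MW-sheaves and to read the comparison off the associated hypercohomology (Postnikov) spectral sequence, the hypothesis $p\ge 2n-1$ being exactly what kills every row but the bottom one in the relevant total degree. First I would dispose of $n\le 0$: there $\tilde\ZZ(n)$ is quasi-isomorphic to a single sheaf in degree $0$ --- to $\sKMW_0$ if $n=0$ and to the Witt sheaf $\sKMW_n=\Wi$ if $n<0$, by Proposition \ref{prop:explicit} --- so the spectral sequence degenerates and the statement is trivial. Assume $n\ge 1$. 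Using that $k$ is infinite perfect, Corollary \ref{cor:cancellation} gives $\H^{p,n}_{\mathrm{MW}}(X,\ZZ)=\Hom_{\DMte}\!\big(\mot(X),\tilde\ZZ\{n\}[p-n]\big)$, which by Corollary \ref{cor:compare_Hom&cohomology} is the Nisnevich hypercohomology $\mathbb H^{p-n}_{\nis}(X,C)$ of $C:=\sus{\tilde\ZZ\{n\}}$ --- an $\AA^1$-local complex of MW-sheaves by Corollary \ref{cor:LA1}, concentrated in cohomological degrees $\le 0$. Set $M^i:=\underline{H}^{-i}(C)$ for $i\ge 0$; by Corollary \ref{cor:A1-local_complexes} and Theorem \ref{thm:Wsh&A1} each $M^i$ is a strictly $\AA^1$-invariant sheaf, and $M^0=\sKMW_n$ by Theorem \ref{thm:KMWmotivic}.

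The hard part --- and I expect the only input that is not purely formal --- will be the vanishing $\H^s_{\nis}(X,M^i)=0$ for every $i\ge 1$, every smooth $X$ and every $s\ge n$. I would deduce it from the cancellation theorem together with Morel's Rost--Schmid machinery. Morel's contraction functor $(-)_{-1}$ is exact on Nisnevich sheaves, hence commutes with the formation of $C$ and with the passage to its cohomology sheaves, so $(M^i)_{-n}=\underline{H}^{-i}(C_{-n})$; and $C_{-n}$, being the $n$-fold contraction of an $\AA^1$-local complex representing $\tilde\ZZ\{n\}$, is quasi-isomorphic to $\tilde\ZZ\{0\}=\sKMW_0$ concentrated in degree $0$ by iterated use of Theorem \ref{thm:cancellation}. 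Thus $(M^i)_{-n}=0$ for $i\ge 1$, so the Rost--Schmid complex of $M^i$ (\cite[\S 5]{Morel08}), whose term in codimension $j$ is built from the $j$-fold contraction $(M^i)_{-j}$, vanishes in codimensions $\ge n$; since this complex computes the Nisnevich cohomology of the strictly $\AA^1$-invariant sheaf $M^i$ (\emph{loc. cit.}), the vanishing follows. This step is insensitive to $\dim X$, which is why the result carries a clean numerical threshold rather than a dimension-dependent bound.

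Finally I would feed this into the hypercohomology spectral sequence $E_2^{s,t}=\H^s_{\nis}(X,\underline{H}^t(C))\Rightarrow \mathbb H^{s+t}_{\nis}(X,C)=\H^{s+t+n,\,n}_{\mathrm{MW}}(X,\ZZ)$, which lives in the half-plane $t\le 0$. In total degree $m:=p-n$ the bottom term $E_2^{m,0}=\H^{p-n}_{\nis}(X,\sKMW_n)$ is the target, while for $i\ge 1$ the term $E_2^{m+i,-i}=\H^{m+i}_{\nis}(X,M^i)$ has $m+i\ge (n-1)+1=n$ --- this is precisely where $p\ge 2n-1$ is used --- and so vanishes by the previous step. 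Moreover, for $r\ge 2$ the differentials leaving $(m,0)$ land in subquotients of $\H^{m+r}_{\nis}(X,M^{r-1})$, zero for the same reason, and those entering $(m,0)$ come from $E_r^{m-r,\,r-1}$, zero because $\underline{H}^{r-1}(C)=0$. Hence $E_\infty^{m,0}=E_2^{m,0}$ is the only nonzero contribution in total degree $m$, and the edge map of the spectral sequence is the desired isomorphism $\H^{p,n}_{\mathrm{MW}}(X,\ZZ)\xrightarrow{\ \sim\ }\H^{p-n}_{\nis}(X,\sKMW_n)$ (which agrees with the Zariski version by Corollary \ref{cor:cohomcomplex}).
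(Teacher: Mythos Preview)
Your proof is correct and follows essentially the same route as the paper's: reduce to $n>0$ via Proposition~\ref{prop:explicit}, run the hypercohomology spectral sequence for the $\AA^1$-local complex computing $\tilde\ZZ(n)$, identify the top cohomology sheaf as $\sKMW_n$ by Theorem~\ref{thm:KMWmotivic}, and kill the remaining contributions in cohomological degree $\ge n$ by showing that the $n$-fold contractions of the lower cohomology sheaves vanish via cancellation and the Rost--Schmid resolution. The only difference is cosmetic---the paper computes the contractions directly as $(\mathbf{H}^{q,n}_{\mathrm{MW}})_{-p}\simeq \mathbf{H}^{q-p,n-p}_{\mathrm{MW}}$ by applying cancellation to Hom-groups rather than to the complex $C$ itself, which sidesteps your (slightly loose) appeal to exactness of $(-)_{-1}$ on arbitrary Nisnevich sheaves while reaching the identical conclusion $(M^i)_{-n}=0$ for $i\ge 1$.
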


\begin{proof}
In view of Proposition \ref{prop:explicit}, we may suppose that $n>0$. For any $q\in\ZZ$, we denote by $\mathbf{H}_{\mathrm{MW}}^{q,n}$ the Nisnevich sheaf associated to the presheaf $X\mapsto \H^{q,n}_{\mathrm{MW}}(X,\ZZ)$ and observe that they coincide with the cohomology sheaves of the complexes $\tilde\ZZ(n)$. Now, the latter are concentrated in cohomological levels $\leq n$ and it follows that $\mathbf{H}_{\mathrm{MW}}^{q,n}=0$ if $q>n$. On the other hand, the sheaves $\mathbf{H}_{\mathrm{MW}}^{q,n}$ are strictly $\AA^1$-invariant, and as such admit a Gersten complex whose components in degree $m$ are of the form 
\[
\bigoplus_{x\in X^{(p)}}(\mathbf{H}_{\mathrm{MW}}^{q,n})_{-p}(k(x),\wedge^p (\mathfrak m_x/\mathfrak m_x^2)^*)
\] 
by \cite[\S 5]{Morel08}. By the cancellation theorem \ref{thm:cancellation}, we have a canonical isomorphism of sheaves $\mathbf{H}_{\mathrm{MW}}^{q-p,n-p}\simeq (\mathbf{H}_{\mathrm{MW}}^{q,n})_{-p}$ and it follows that the terms in the Gersten resolution are of the form 
\[
\bigoplus_{x\in X^{(p)}}(\mathbf{H}_{\mathrm{MW}}^{q-p,n-p})(k(x),\wedge^p (\mathfrak m_x/\mathfrak m_x^2)^*).
\] 
If $p\geq n$, then $\tilde \ZZ(n-p)\simeq \sKMW_{n-p}[p-n]$ and it follows that $\mathbf{H}_{\mathrm{MW}}^{q-p,n-p}$ is the sheaf associated to the presheaf $X\mapsto H^{q-n}(X,\sKMW_{n-p})$, which is trivial if $q\neq n$. Altogether, we see that 
\[
\mathbf{H}_{\mathrm{MW}}^{q,n}=\begin{cases} 0 & \text{if } q>n. \\
0 & \text{if } p\geq n \text{ and } q\neq n. \end{cases} 
\]
We now consider the hypercohomology spectral sequence for the complex $\tilde\ZZ(n)$ (\cite[Theorem 0.3]{Suslin00}) $E_2^{p,q}:=\H^p(X,\mathbf{H}_{\mathrm{MW}}^{q,n})\implies \H^{p+q,n}(X,\ZZ)$ which is strongly convergent. Our computations of the sheaves $\mathbf{H}_{\mathrm{MW}}^{q,n}$ immediately imply that $\H^{p-n}(X,\mathbf{H}_{\mathrm{MW}}^{n,n})=\H^{p,n}_{\mathrm{MW}}(X,\ZZ)$ if $p\geq 2n-1$. We conclude using Theorem \ref{thm:KMWmotivic}.
\end{proof}

\begin{rem}
The isomorphisms $\H^{p,n}_{\mathrm{MW}}(X,\ZZ)\to \H^{p-n}(X,\sKMW_n)$ are functorial in $X$. Indeed, the result comes from the analysis of the hypercohomology spectral sequence for the complexes $\tilde\ZZ(n)$, which is functorial in $X$.
\end{rem}

Setting $p=2n$ in the previous theorem, and using the fact that $\H^{n}(X,\sKMW_n)=\wCH^n(X)$ by definition (for $n\in \NN$), we get the following corollary.

\begin{cor}\label{cor:ChowWitt}
For any smooth scheme $X$ and any $n\in \NN$, the hypercohomology spectral sequence induces isomorphisms 
\[
\H^{2n,n}_{\mathrm{MW}}(X,\ZZ)\to \wCH^n(X).
\]
\end{cor}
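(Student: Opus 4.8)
The statement is precisely the case $p=2n$ of Theorem~\ref{thm:hyper}, so the plan is simply to specialize that result and to rewrite the target group. First I would check the hypothesis: for $p=2n$ one has trivially $p=2n\geq 2n-1$, so Theorem~\ref{thm:hyper} applies — and with room to spare, since the delicate boundary is $p=2n-1$, where the vanishing statements for the sheaves $\mathbf{H}_{\mathrm{MW}}^{q,n}$ established in that proof only barely suffice to collapse the hypercohomology spectral sequence $E_2^{p,q}=\H^p(X,\mathbf{H}_{\mathrm{MW}}^{q,n})\Rightarrow\H^{p+q,n}_{\mathrm{MW}}(X,\ZZ)$ in the relevant bidegree; for $p=2n$ we are comfortably inside the stated range. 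This yields a canonical isomorphism
\[
\H^{2n,n}_{\mathrm{MW}}(X,\ZZ)\xrightarrow{\ \sim\ }\H^{2n-n}(X,\sKMW_n)=\H^{n}(X,\sKMW_n)
\]
induced by the edge map of that spectral sequence.

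Then I would invoke the description of Chow--Witt groups recalled in Remark~\ref{rem:finite_corr&plim}(3): for $n\in\NN$ the group $\wCH^n(X)$ is by definition the $n$-th Nisnevich cohomology group $\H^n(X,\sKMW_n)$ of the $n$-th unramified Milnor--Witt sheaf. Composing the displayed isomorphism with this identification gives the asserted map $\H^{2n,n}_{\mathrm{MW}}(X,\ZZ)\to\wCH^n(X)$, and its functoriality in $X$ is inherited from the functoriality of the hypercohomology spectral sequence already observed in the remark following Theorem~\ref{thm:hyper}. There is no genuine obstacle here: all the substance has been carried out upstream, in Theorem~\ref{thm:hyper} and, behind it, in the cancellation theorem~\ref{thm:cancellation} and Theorem~\ref{thm:KMWmotivic}; the corollary is a one-line specialization plus the unwinding of the definition of $\wCH^n$.
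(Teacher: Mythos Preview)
Your proof is correct and matches the paper's own argument essentially verbatim: the paper simply sets $p=2n$ in Theorem~\ref{thm:hyper} and invokes the identification $\H^n(X,\sKMW_n)=\wCH^n(X)$ by definition. Your additional remarks on functoriality and on the spectral sequence collapse are accurate but go slightly beyond what the paper records at this point.
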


\begin{rem}\label{rem:preThom}
Both Theorems \ref{thm:hyper} and Corollary \ref{cor:ChowWitt} are still valid if one considers cohomology with support on a closed subset $Y\subset X$, i.e. the hypercohomology spectral sequence (taken with support) yields an isomorphism
\[
\H^{p,n}_{\mathrm{MW},Y}(X,\ZZ)\to \H^{p-n}_Y(X,\sKMW_n)
\]
provided $p\geq 2n-1$.

Let now $E$ be a rank $r$ vector bundle over $X$, $s:X\to E$ be the zero section and $E^0=E\setminus s(X)$. The Thom space of $E$ is the object of $\DMteZZ$ defined by 
\[
Th(E)=\Sigma^\infty\mot(\tilde \ZZ(E)/\tilde \ZZ(E^0)).
\] 
It follows from Corollary \ref{cor:compare_Hom&cohomology} and \cite[Proposition 3.13]{Asok13} that (for $n\in\NN$)
\[
\Hom_{\DMteZZ}(\Th(E),\tilde \ZZ(n)[2n])\simeq \Hom_{\DMte}(\mot(\tilde \ZZ(E)/\tilde \ZZ(E^0)),\tilde \ZZ(n)[2n])\simeq \H_{\mathrm{MW},X}^{2n,n}(E,\ZZ).
\]
Using the above result, we get $\Hom_{\DMteZZ}(\Th(E),\tilde \ZZ(n)[2n])\simeq  \wCH^{n}_X(E)$. Using finally the Thom isomorphism (\cite[Corollary 5.30]{Morel08} or \cite[Remarque 10.4.8]{FaselCW})
\[
\wCH^{n-r}(X,\mathrm{det}(E))\simeq \wCH^{n}_X(L),
\]
we obtain an isomorphism
\[
\Hom_{\DMteZZ}(\Th(E),\tilde \ZZ(n)[2n])\simeq \wCH^{n-r}(X,\mathrm{det}(E))
\]
which is functorial (for schemes over $X$).
\end{rem}

\begin{rem}
The isomorphisms of Corollary \ref{cor:ChowWitt} induce a ring homomorphism
\[
\bigoplus_{n\in \NN}\H^{2n,n}_{\mathrm{MW}}(X,\ZZ)\to \bigoplus_{n\in\NN}\wCH^n(X).
\]
This follows readily from the fact that the isomorphism of Theorem \ref{thm:KMWmotivic} is an isomorphism of graded rings.
\end{rem}

\bibliographystyle{amsalpha}
\bibliography{DMt}

\providecommand{\bysame}{\leavevmode\hbox to3em{\hrulefill}\thinspace}
\providecommand{\MR}{\relax\ifhmode\unskip\space\fi MR }
\providecommand{\MRhref}[2]{%
  \href{http://www.ams.org/mathscinet-getitem?mr=#1}{#2}
}
\providecommand{\href}[2]{#2}
\begin{thebibliography}{MVW06}

\bibitem[AF16]{Asok13}
A.~Asok and J.~Fasel, \emph{Comparing {E}uler classes}, Quart. J. Math.
  \textbf{67} (2016), 603--635.

\bibitem[Ayo07]{Ayoub}
J.~Ayoub, \emph{Les six op\'erations de {G}rothendieck et le formalisme des
  cycles \'evanescents dans le monde motivique ({II})}, Ast\'erisque, vol. 315,
  Soc. Math. France, 2007.

\bibitem[CD09a]{CD1}
D.-C. Cisinski and F.~D{\'e}glise, \emph{Local and stable homological algebra
  in {G}rothendieck abelian categories}, HHA \textbf{11} (2009), no.~1,
  219--260.

\bibitem[CD09b]{CD3}
\bysame, \emph{Triangulated categories of mixed motives}, arXiv:0912.2110,
  version 3, 2009.

\bibitem[CF14]{Calmes14b}
B.~Calm\`es and J.~Fasel, \emph{The category of finite {C}how-{W}itt
  correspondences}, Available at \texttt{http://arxiv.org/abs/1412.2989}, 2014.

\bibitem[DHI04]{DHI}
D.~Dugger, S.~Hollander, and D.~C. Isaksen, \emph{Hypercovers and simplicial
  presheaves}, Math. Proc. Cambridge Philos. Soc. \textbf{136} (2004), no.~1,
  9--51.

\bibitem[Fas08]{FaselCW}
J.~Fasel, \emph{Groupes de {C}how-{W}itt}, M\'em. Soc. Math. Fr. (N.S.) (2008),
  no.~113, viii+197.

\bibitem[F{\O}16]{Fasel16}
J.~Fasel and P.~A. {\O}stv{\ae}r, \emph{A cancellation theorem for
  {$W$}-motives}, In preparation, 2016.

\bibitem[GP14]{Garkusha14}
G.~Garkusha and I.~Panin, \emph{Framed {M}otives of algebraic varieties},
  arXiv:1409.4372, 2014.

\bibitem[GP15]{Garkusha15}
\bysame, \emph{Homotopy invariant presheaves with framed transfers}, Preprint
  available at \texttt{http://arxiv.org/abs/1504.00884}, 2015.

\bibitem[GSZ15]{Gille15}
S.~Gille, S.~Scully, and C.~Zhong, \emph{Milnor-{W}itt groups over local
  rings}, Appeared in Advances, 2015.

\bibitem[Hir03]{Hirschhorn03}
P.~S. Hirschhorn, \emph{{Model categories and their localizations}},
  Providence, RI: American Mathematical Society (AMS), 2003.

\bibitem[HK01]{Hu2001}
P.~Hu and I.~Kriz, \emph{The {S}teinberg relation in {$\mathbb A^1$}-stable
  homotopy}, Int. {M}ath. {R}es. {N}ot. \textbf{17} (2001), 907--912.

\bibitem[Kol17]{Kolderup17}
H.~A. Kolderup, \emph{Homotopy invariance of presheaves with {M}ilnor-{W}itt
  transfers}, In preparation, 2017.

\bibitem[Mil12]{Milne12}
J.~S. Milne, \emph{Lectures on {E}tale {C}ohomology}, Available at
  \texttt{www.jmilne.org/math/}, 2012.

\bibitem[ML98]{MacLane98}
S.~Mac~Lane, \emph{{Categories for the working mathematician. 2nd ed.}}, 2nd ed
  ed., Graduate {T}exts in {M}athematics, New York, NY: Springer, 1998.

\bibitem[Mor05]{Morel05b}
F.~Morel, \emph{The stable {${\mathbb A}^1$}-connectivity theorems}, $K$-Theory
  \textbf{35} (2005), no.~1-2, 1--68. \MR{2240215 (2007d:14041)}

\bibitem[Mor12]{Morel08}
\bysame, \emph{$\mathbb {A}^1$-{A}lgebraic {T}opology over a {F}ield}, Lecture
  Notes in Math., vol. 2052, Springer, New York, 2012.

\bibitem[MVW06]{Mazza06}
C.~Mazza, V.~Voevodsky, and C.~Weibel, \emph{Lecture notes on motivic
  cohomology}, Clay Mathematics Monographs, vol.~2, American Mathematical
  Society, Providence, RI, 2006. \MR{MR2242284 (2007e:14035)}

\bibitem[Nee01a]{Nee2}
A.~Neeman, \emph{On the derived category of sheaves on a manifold}, Doc. Math.
  \textbf{6} (2001), 483--488.

\bibitem[Nee01b]{Nee}
A.~Neeman, \emph{Triangulated categories}, Annals of Mathematics Studies, vol.
  148, Princeton University Press, Princeton, NJ, 2001.

\bibitem[SV00]{Suslin00}
Andrei Suslin and Vladimir Voevodsky, \emph{Bloch-{K}ato {C}onjecture and
  {M}otivic {C}ohomology with {F}inite {C}oefficients}, The {A}rithmetic and
  {G}eometry of {A}lgebraic {C}ycles (B.~Brent Gordon, James~D. Lewis, Stefan
  M{\"u}ller-Stach, Shuji Saito, and Noriko Yui, eds.), Springer Netherlands,
  Dordrecht, 2000, pp.~117--189.

\bibitem[Voe10]{Voevodsky10}
V.~Voevodsky, \emph{Cancellation theorem}, Doc. Math. Extra Volume: Andrei A.
  Suslin Sixtieth Birthday (2010), 671--685.

\bibitem[VSF00]{FSV}
V.~Voevodsky, A.~Suslin, and E.~M. Friedlander, \emph{Cycles, transfers and
  motivic homology theories}, Annals of Mathematics Studies, vol. 143,
  Princeton Univ. Press, 2000.

\bibitem[Wei94]{Weibel94}
C.~Weibel, \emph{An introduction to homological algebra}, Cambridge University
  Press, 1994.

\end{thebibliography}

\end{document}